%% file: Main_SE_RHS_2026.tex
\documentclass[12pt,a4paper,reqno]{amsart}
\numberwithin{equation}{section}
\usepackage[utf8]{inputenc}
\usepackage[english]{babel}
\usepackage{hyperref}

\usepackage{multirow} 
\usepackage{graphicx}
\usepackage{array}
\usepackage{longtable}
\usepackage{tikz-cd}
\newtheorem{theo}{Theorem}[section]
\newtheorem*{theorem*}{Theorem}
\newtheorem{lem}{Lemma}[section]
\newtheorem{defi}{Definition}[section]

\newtheorem{rem}{Remark}[section]
\newtheorem{prop}{Proposition}[section]
\newtheorem{exm}{Example}[section]

\usepackage[foot]{amsaddr}
\usepackage{url}
\usepackage{hyperref}

\usepackage{amsmath}
\usepackage{amssymb, amscd}
\usepackage[all, cmtip]{xy}
\usepackage{xcolor}
\usepackage{multirow} 
\usepackage{longtable}
\usepackage{array}

\newtheorem{theorem}{Theorem}[section]

\newtheorem{conjecture}[theorem]{Conjecture}

\newtheorem{proposition}[theorem]{Proposition}
\newtheorem{corollary}[theorem]{Corollary}

\theoremstyle{definition}

{\sc}% Thm head font  (could be also \sc)
{}%         Body font
{}%         Indent amount (empty = no indent, \parindent = para indent)
{\sc}% Thm head font  (could be also \sc)
\newtheorem{remark}{Remark}[section]

\newcommand{\thismonth}{\ifcase\month\or
  January\or February\or March\or April\or May\or June\or
  July\or August\or September\or October\or November\or December\fi
  \space\number\year}

\makeatletter
\newcommand{\rssymb}[2]{\newcommand{#1}{{\mathrmsl{#2}}}}
\newcommand{\calsymb}[2]{\newcommand{#1}{{\mathcal{#2}}}}
\newcommand{\bbsymb}[2]{\newcommand{#1}{{\mathbb{#2}}}}
\newcommand{\lieoper}[2]{\newcommand{#1}{\mathop
  {\mathfrak{#2}\null}\nolimits}}
\newcommand{\oper}[3][n]{\newcommand{#2}{\mathop
  {\mathrm{#3}\null}\ifx n#1\nolimits\else\limits\fi}}
\newcommand{\rsoper}[3][n]{\newcommand{#2}{\mathop
  {\mathrmsl{#3}\null}\ifx n#1\nolimits\else\limits\fi}}
\bbsymb\C{C} \bbsymb\F{F} \bbsymb\HQ{H}\bbsymb\N{N} \bbsymb\Q{Q}
\bbsymb\R{R} \bbsymb\U{U} \bbsymb\V{V} \bbsymb\W{W} \bbsymb\Z{Z}
\bbsymb\bbf{F} \bbsymb\bbk{K} \bbsymb\bbi{I} \bbsymb\bbl{L}
\bbsymb\bbo{O} \bbsymb\bbj{J} \bbsymb\bby{Y} \bbsymb\bbp{P}
\bbsymb\bba{A}
\calsymb\cA{A} \calsymb\cB{B} \calsymb\cC{C} 
\calsymb\cM{M} \calsymb\cN{N} \calsymb\cO{O} \calsymb\cP{P}
\calsymb\cU{U} \calsymb\cV{V} \calsymb\cW{W} \calsymb\cX{X}
\calsymb\cY{Y} \calsymb\cZ{Z}
\renewcommand{\geq}{\geqslant} \renewcommand{\leq}{\leqslant}
\oper\End{End} \oper\Hom{Hom}                    % Vector space constructions
\oper\Sym{Sym} \oper\Skew{Skew}
\oper\Aut{Aut}                                   % Group constructions
\oper\GL{GL} \oper\SL{SL}\oper\Symp{Sp} \oper\CO{CO} \oper\On{O}
\oper\SO{SO} \oper\Pin{Pin} \oper\Spin{Spin} \oper\CU{CU}
\oper\Un{U} \oper\SU{SU} \oper\PSU{PSU} \rsoper\Diff{Diff}
\rsoper\SDiff{SDiff}
\lieoper\der{der}                                % Lie algebra constructions
\lieoper\gl{gl} \lieoper\sgl{sl}\lieoper\symp{sp} \lieoper\co{co}
\lieoper\so{so} \lieoper\spin{spin} \lieoper\cu{cu} \lieoper\un{u}
\lieoper\su{su} \rsoper\Vect{Vect} \rsoper\Ham{Ham}
\def\la#1{\hbox to #1pc{\leftarrowfill}}
\def\ra#1{\hbox to #1pc{\rightarrowfill}}

\newcommand{\Norm}[2][]{\bigl|\mkern-3mu\bigr|#2\bigr|\mkern-3mu\bigr|
  _{\lower1pt\hbox{${}_{#1}$}}}

\rsoper\dimn{dim}                           % dimension
\rsoper\grad{grad}                          % gradient
\rsoper\kernel{ker}\rsoper\image{im}        % kernel and image
\rsoper\alt{alt}   \rsoper\sym{sym}         % alternating and symmetric part
\rsoper\Ad{Ad}     \rsoper\ad{ad}           % adjoint action or bundle
\rsoper\CoAd{CoAd} \rsoper\coad{coad}       % coadjoint action
\rsoper\trace{tr}  \rsoper\trfree{tf}       % trace and tracefree part
\rsoper\detm{det}                           % determinant
\rsoper\Vol{Vol}                            % volume
\rsoper\divg{div}                           % divergence
\rsoper\sign{sign}                          % sign function
\rssymb\iden{id}                            % identity
\rssymb\vol{vol}                            % volume element
\oper\Imag{Im}\oper\Real{Re}                % real and imaginary
\newcommand{\sd}{{\raise1pt\hbox{$\scriptscriptstyle +$}}}
\newcommand{\asd}{{\raise1pt\hbox{$\scriptscriptstyle -$}}}
\newcommand{\sdasd}{{\raise1pt\hbox{$\scriptscriptstyle\pm$}}}
\newcommand{\asdsd}{{\raise1pt\hbox{$\scriptscriptstyle\mp$}}}

\rsoper\scal{scal}
\def\kahl/{k\"ahler}
\def\Kahl/{K{\"a}hler}

\begin{document}

\title[Sasaki-Einstein $\mathbb Q$-homology spheres via rational varieties and Berglund-Hübsch]
{Sasaki-Einstein rational homology spheres via  rational varieties and the Berglund-Hübsch rule}

\author[J. Cuadros]{Jaime Cuadros Valle$^1$}
\author[J. Lope]{Joe Lope Vicente$^1$}

\address{$^1$Departamento de Ciencias, Secci\'on Matem\'aticas,
Pontificia Universidad Cat\'olica del Per\'u,
Apartado 1761, Lima 100, Per\'u}
\email{jcuadros@pucp.edu.pe}
\email{j.lope@pucp.edu.pe}

\date{\thismonth}

\begin{abstract} 
 We find  Sasaki-Einstein metrics on rational homology  $(4n-1)$-spheres for $n>1$ built from cyclic polynomials of index 1 cutting out rational varieties. The Einstein metrics found here are inequivalent to the ones discovered by Boyer and Galicki in  \cite{BG02}. 
Our findings  are a  consequence of an improvement, for hypersurfaces defined by  cyclic polynomials, on the estimate given by Johnson and Kollár  in \cite{JK} to determine Kähler-Einstein orbi\-fold metrics.  We also construct weighted hypersurfaces that contain the rational varieties described above as codimension two subvarieties and, due to the refined estimate for cyclic polynomials, we  find conditions on the weights and degrees of these hypersurfaces such that their corresponding  smooth links admit Sasaki-Einstein metrics as well. Finally, we  study the effect of the Berglund-Hübsch transpose rule, coming from classical mirror symmetry, on the topology  and on the existence of  Sasaki-Einstein metrics on the links studied. 
We generalize the results presented in \cite{CGL} for rational homology 7-spheres to rational homology $(4n-1)$-spheres by proving  that, under mild conditions, these two features are invariant under the aforementioned transpose rule. 
 \end{abstract}

\maketitle
% !TEX root =  % !TEX root =  

\noindent{\bf Keywords:} Sasaki-Einstein, Rational homology spheres,  K\"ahler-Einstein, Berglund-Hübsch.
\medskip

\noindent{\bf Mathematics Subject Classification}  53C25, 14J45, 32Q20. 
\medskip

\maketitle
\vspace{-2mm} 

\input{Introduction}
\input{Chapter3}
\input{Chapter4}

\section*{Declarations}

%\subsection*{Ethics approval and consent to participate} Informed consent was obtained from all individual participants included in the study.

%\subsection*{Consent for publication}  The authors have read and understood the publishing policy, and submit this manuscript in accordance with this policy.

%\subsection*{Availability of data and materials} All of the material is owned by the authors and/or no permissions are required.

%\subsection*{Competing interests} We declare that the authors have no competing interests as defined by Springer, or other interests that might be perceived to influence the results and/or discussion reported in this paper.

%\subsection*{Funding} The first author received  financial support from Pontificia Universidad Católica del Perú through project VRI-DFI 2019-1-0089.

\subsection*{Data Availability} Data sharing not applicable to this article as no datasets were generated or analysed during the current study.

\subsection*{Conflict of interests} The authors have no competing interests to declare that are relevant to the content of this article.

%\subsection*{Authors' Contributions} J.C wrote  the manuscript. J.L made  contributions to the interpretation of data and wrote the programs in Matlab used to  determine the torsion of the homology groups.

%\subsection*{Acknowledgements} The authors would like to thank the anonymous reviewer for the valuable detailed suggestions which greatly improved the clarity and organization of this article. The second author would like to thank Marco Aldi for useful conversations.

%\bibliographystyle{plain} % Defines the style (e.g., plain, IEEEtran, alpha)
%\addcontentsline{toc}{chapter}{References}
%\renewcommand{\bibname}{References}
\bibliographystyle{amsalpha}
\bibliography{References} % Links to your_bib_file_name.bib (do not include the .bib extension)

\end{document}

%% file: Introduction.tex
% !TEX root = Main_SE_RHS_2026.tex

\section{Introduction}

From a differential geometric point of view, establishing the existence of  Einstein metrics on differentiable manifolds is  of the upmost importance since these metrics represent optimal geometries that prescribe canonical curvatures on manifolds. In \cite{Ko}  Kobayashi considered circle bundles 
$
S^1 \hookrightarrow M \rightarrow X
$
 over  Riemannian manifolds $(X, g)$ with a connection $1$-form $\eta.$ By choosing the standard metric on the circle fibers, one can lift the base metric $g$ to a metric $g_\eta$ on the total space $M$. A natural question posed and answered by Kobayashi was the following: if $(X, g)$ is an Einstein manifold, under what conditions is $(M, g_\eta)$ also Einstein? In general, this occurs only 
 when $(X, g)$ is Ricci flat and the connection  is flat or if $X$ is a complex manifold, $g$ is the real part of a Kähler-Einstein metric $\omega$ satisfying $\operatorname{Ricci}(\omega) > 0.$ The latter can be paraphrased by saying that  the link of a cone over a smooth projective variety $X$ carries a natural Einstein metric if and only if $X$ is Fano and it  carries a Kähler-Einstein metric. The Einstein metric obtained is of Sasaki type: a metric structure compatible with a natural contact structure on the link. Metrics that are simultaneously Einstein and Sasaki are called Sasaki-Einstein metrics.
 
 In \cite{BG01}  Boyer and Galicki  generalized the result given by Kobayashi to weighted cones and gave an algorithm, the Kobayashi-Boyer-Galicki method, to obtain Sasaki-Einstein manifolds from the existence of orbifold Fano Kähler-Einstein hypersurfaces in weighted projective complex $n$-spaces.  In generality,  to obtain Fano Kähler-Einstein  metrics one has to determine whether the variety is  K-stable \cite{Ti}, \cite{CDS}, \cite{LXZ}. In  \cite{Ti} Tian defined the  
$\alpha$-invariant to study K-stability for the smooth case. Later,  Demailly and Kollár in \cite{DK} 
adapted the $\alpha$-invariant in the context of orbifolds and showed that  
given an $n$-dimensional Fano variety  $X$ (possibly with quotient singularities) such that there is an $\epsilon>0$ satisfying 
$$
\left(X, \frac{n+\epsilon}{n+1} D\right) \text { is klt }
$$
for every $D\in |-K_X|_{\mathbb Q}.$ Then $X$ has a Kähler-Einstein (orbifold) metric.  Johnson and Kollár \cite{JK1, JK} improved this result for quasi-smooth weighted hypersurface: suppose $X \subset \mathbb{P}(\mathbf{w})=\mathbb{P}\left(w_0, \ldots, w_n\right)$ is a normal Fano variety of index $I$  such that

$$
I \operatorname{deg}(X)<\frac{n}{n-1} \min _{i, j}\left\{w_i w_j\right\}.
$$
Then there exists $\epsilon>0$ such that $\left(X,\frac{n-1+\epsilon}{n} D\right)$ is klt for any $D \in\left|-K_X\right|_{\mathbb{Q}}$ and then  $X$ admits a Kähler-Einstein metric. 

This method had successful outcomes during the first decade of this century to establish the existence of Sasaki-Einstein metrics on homotopy spheres and on rational homology spheres \cite{BGN, BGK}.  For instance, in \cite{BGN}, the authors give the first examples of Sasaki-Einstein metrics on highly connected rational homology $7$-spheres. These examples were constructed from certain elements of the list of Kähler-Einstein Fano 3-folds given in \cite{JK}. 
Subsequently, the results in \cite{BGN}  were extended in \cite{CL}. 

Of course, the estimate given by  Johnson and Kollár for the existence of Kähler-Einstein metrics has been improved and refined  for certain types of weighted hypersurfaces. In \cite{BGK}, the authors achieved a better estimate for Fano weighted hypersurfaces that come from polynomials of Brieskorn-Pham type and their perturbations. The proposed inequality allowed Boyer and Galicki  \cite{BG02} to find parameter families of Sasaki-Einstein metrics on infinitely many rational homology $(2n-1)$-spheres, for each $n\geq 3$, which are built as branched covers of Fermat Calabi-Yau hypersurfaces. 

In recent years the $\alpha$-invariant has been replaced by more powerful invariants.  For instance the $\delta$-invariant defined by Fujita and Odaka in \cite{FO}. Recently, in \cite{ST}, Sano and Tasin used this invariant in the context of weighted hypersurfaces  to establish  K-stability and  hence existence of a Kähler-Einstein structure on them. For this, a necessary condition is that at least one of the weights divides the degree of the hypersurface. As a consequence, they found new families of quasi-smooth $3$-folds and $4$-folds of index $1$ that are K-stable (see \cite{IF} and \cite{BK}) and thus with corresponding links admitting Sasaki-Einstein metrics. 

Another important tool one can use to construct new links that are rational homology spheres admitting Sasaki-Einstein metrics is the \textit{Berglund-Hübsch transpose rule}. This technique, which was introduced in the Berglund-Hüsbsch-Krawitz (BHK) mirror symmetry construction, was proposed by Berglund and Hübsch in \cite{BH} and resumed and expanded by Krawitz in \cite{Kr}. Originally the purpose of Berglund-Hübsch transpose rule was to determine  Calabi-Yau mirror pairs. The method consists of taking  a Calabi-Yau hypersurface $X_{f}$ in certain weighted projective space $\mathbb{P}(\mathbf{w})$ which is defined by an invertible polynomial, that is, a  quasi-smooth and  quasi-homogeneous polynomial $f=\sum_{j=1}^{n}\prod_{i=1}^{n}z_{i}^{a_{ij}}$ that is characterized by the fact that its matrix of exponents $A=[a_{ij}]$ is invertible over $\mathbb{Q}$. Now, if one considers its transpose matrix $A^T=[a_{ji}]$, one constructs a new  polynomial $f^{T}=\sum_{j=1}^{n}\prod_{i=1}^{n}z_{i}^{a_{ji}}$, which is also invertible (see \cite{KS}) and its corresponding Calabi-Yau hypersurface, the {\it dual} hypersurface $X_{f^{T}}$, which is contained in a different  weighted projective space $\mathbb{P}(\mathbf{w^{T}}).$ 

In  \cite{CGL}, we studied the effect of the Berglund-Hübsch rule on links that are rational homology $7$-spheres arising  from the list of Johnson and Kollár of Fano 3-folds anticanonically embedded in weighted projective spaces. In that work, we proved that the Berglund-Hübsch rule preserves important topological and Riemannian features:  
\begin{itemize}
    \item The homology groups  of the links $L_{f}$ and $L_{f^{T}}$ coincide for some particular types of invertible polynomials.
    \item The existence of Kähler-Einstein metric on the orbifold $X_{f}$ is also preserved by the Berglund-Hübsch rule under some mild conditons. In particular, if  the link $L_f$ admits a Sasaki-Einstein metric then its dual $L_{f^T}$ admits a Sasaki-Einstein metric as well.
\end{itemize}

In this work we  generalize the results given in \cite{CGL} for links of  dimension $4n-1.$ Indeed, first  we  study the effect of the Berglund-Hübsch rule on the topology of rational homology $(4n-1)$-spheres given as links defined by certain types of invertible polynomials. Then we impose conditions on the data $(\mathbf{w},d)$ of weights and degree associated to these polynomials so that the Sasaki-Einstein property is preserved under the Berglund-Hübsch rule.  Below we roughly explain how to achieve these generalizations. 
\medskip

\subsection*{Building rational homology $(4n-1)$-spheres}
The main ingredient to build the rational homology spheres  studied in this paper consists  of \textit{cycle} polynomials, that is, polynomials that can be written as:
\begin{equation}
f_{0}=z_{n}z_{0}^{a_{0}}+z_{0}z_{1}^{a_{1}}+\dots+z_{n-1}z_{n}^{a_{n}}.
\end{equation}
In \cite{K}, Kollár studied many important properties related to the orbifold hypersurface $X_{f_{0}}\subset\mathbb{P}(\mathbf{w})$ that are obtained as a consequence of the description of the degree $d$ and the weights $w_{i}$'s in terms of the exponents $a_{0},a_{1},\dots, a_{n}$ of $f_{0}$. For instance, it is not difficult to prove that  the degree of $f_0$ equals the determinant of the matrix $A$ of exponents of $f_0$ under the condition $\gcd(w_o. \ldots w_n)=1.$ Kollár considered the map
$$
\phi_A: \mathbb{P}\left(w_0, w_1, \ldots, w_n\right) \rightarrow \mathbb{P}^n
$$ 
given by 
$
\left(x_0: x_1: \ldots: x_n\right) \stackrel{\phi_A}{\longmapsto}\left(y_0: y_1: \ldots: y_n\right)
$ 
with $y_j=\prod_{i=0}^n x_i^{a_{i j}},$  and he showed that $\phi_A$ maps  $\mathbb{P}\left(w_0, w_1, \ldots w_n\right)$ birationally to $\mathbb{P}^n,$ and so $\{f_0=$ $0\}$ is birational to the hyperplane $\left\{y_0+y_1+\ldots+y_n=0\right\} \subset \mathbb{P}^n$ (see \cite{Ess} for a slight generalization of this argument). 
Moreover, when $n$ is even, one can use the Milnor-Orlik formula \cite{MO} for weighted homogeneous polynomials and 
showed that the corresponding  link $L_{f_{0}}=\{f_0=0\}\cap S^{2n+1}$ is a rational homology sphere. 

We  construct new invertible polynomials adding  blocks of the form  $g(z_{n+1},z_{n+2})$ to the cyclic polynomial $f_{0}$. In this way, we obtain Thom-Sebastiani sums of the following three forms: 
\begin{itemize}
\item $$f_{I}=f_{0}+z_{n+1}^{a_{n+1}}+z_{n+2}^{a_{n+2}},$$ 
\item $$f_{II}=f_{0}+z_{n+1}^{a_{n+1}}+z_{n+1}z_{n+2}^{a_{n+2}} \quad \text{ or } \quad $$
\item $$f_{III}=f_{0}+z_{n+2}z_{n+1}^{a_{n+1}}+z_{n+1}z_{n+2}^{a_{n+2}}.$$  
\end{itemize}

Giving suitable weights to the variables $z_{n+1}$ and $z_{n+2}$, we find that the links corresponding to these Thom-Sebastiani sums are also rational homology spheres. Moreover, for  invertible polynomials of the form $f=f_{I}$ or $f=f_{III}$, we  show that the Berglund-Hübsch transpose rule leaves the homology groups invariant, that is, the homology groups  of the links $L_{f}$ and $L_{f^{T}}$ are the same. In the remaining case, when the invertible polynomial is of the form  $f=f_{II}$, we show  that although the homology groups of the links do not coincide, they remain rational homology spheres under the Berglund-Hübsch rule.  

\subsection*{Sasaki-Einstein metrics on rational homology spheres}
In the last part of this article, we focus on establishing the existence of Sasaki-Einstein metrics on the rational homology spheres built above. 
First, we study the Fano  weighted hypersurfaces defined by cyclic polynomials $f_0$ which define rational varieties. We show that these varieties satisfy the estimate given by  Johnson and Kóllar in \cite{JK} and thus admit Kähler-Einstein orbifolds metrics provided the Fano index  1. Our proof is based  on careful calculations to obtain effective bounds to establish the estimate of  Johnson and Kollár (see Section 6). Actually, we  improve  this original estimate for links that come from cycle polynomials with the condition $\gcd(w_0, \ldots w_n)=1$ and Fano index 1 (Theorem \ref{prop:4.3.4} ): 
  $$d <\frac{n+2}{n+1} \min _{i, j}\left\{w_i w_j\right\}.$$ 
  Since $\frac{n+2}{n+1} \min _{i, j}\left\{w_i w_j\right\}<\frac{n}{n-1} \min _{i, j}\left\{w_i w_j\right\},$ we have:

\begin{theorem*}(Theorem \ref{prop:4.3.4})
 Consider the cycle polynomial   
 $$f_0=z_n z_0^{a_0}+z_0 z_1^{a_1}+z_1 z_2^{a_2}+\cdots+z_{n-1} z_n^{a_n},$$  with $n \geq 4$ even. Suppose that the weight vector $\mathbf{w}$ and degree $d$ satisfy the following: 
 \begin{enumerate}
 \item  $\gcd(w_0, \ldots w_n)=1.$
 \item The Fano index equals 1.
 \end{enumerate}
 Then the Fano weighted hypersurface $X_{f_0} \subset \mathbb{P}(\mathbf{w})$ admits an orbifold  Kähler-Einstein metric and  from  the Kobayashi-Boyer-Galicki method it follows that the corresponding  $S^1$-orbibundle admits automatically Sasaki-Einstein structures. Furthermore, all these links are rational homology $(4n-1)$-spheres.  
 \end{theorem*}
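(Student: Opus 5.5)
The plan is to prove the refined estimate
$$d<\frac{n+2}{n+1}\min_{i,j}\{w_iw_j\}$$
for cycle polynomials with $\gcd(w_0,\dots,w_n)=1$ and Fano index $1$. Since $\frac{n+2}{n+1}<\frac{n}{n-1}$, this implies the Johnson--Kollár criterion, so $X_{f_0}$ is Kähler--Einstein. The Kobayashi--Boyer--Galicki method then gives the Sasaki--Einstein structure on the link. The rational homology sphere statement is the Milnor--Orlik computation for $n$ even recalled in the introduction, following \cite{K}. So everything reduces to the inequality.

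\textbf{Step 1: explicit weights.} Quasi-homogeneity gives the linear system
$$a_0w_0+w_n=d,\qquad a_iw_i+w_{i-1}=d\quad(1\le i\le n).$$
For $n$ even, using $\gcd(w_i)=1$, we have $d=\det A=a_0a_1\cdots a_n+1$. Solving the cyclic system gives each weight as an alternating sum
$$w_i=\prod_{j\neq i}a_j-\prod_{j\neq i,i-1}a_j+\prod_{j\neq i,i-1,i-2}a_j-\cdots\pm 1 ,$$
running backwards around the cycle starting from $i+1$. Since all $a_j\geq 2$ (quasi-smoothness excludes $a_j=1$ here), pairing consecutive terms gives
$$w_i\geq \Bigl(1-\tfrac{1}{a_{i-1}}\Bigr)\prod_{j\ne i}a_j\geq \frac12\cdot\frac{d-1}{a_i}.$$

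\textbf{Step 2: use the index-one condition.} The condition is $\sum_i w_i=d+1=\prod a_j+2$. Inserting the leading terms $\prod_{j\neq i}a_j=\prod a_j/a_i$ gives, to first order,
$$\prod a_j\Bigl(\sum_i \tfrac1{a_i}\Bigr)-\bigl(\text{lower order}\bigr)=\prod a_j+2 .$$
This forces $\sum_i 1/a_i$ to be essentially $1$, up to corrections of relative size $1/\min a_j$. In particular it pins down the exponent vector: the $a_i$ form a (near-)Egyptian-fraction tuple. Hence only a bounded number of the $a_i$ can be small, while the others are as large as the equation permits.

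\textbf{Step 3: bound the smallest product.} Combining the lower bound of Step 1 for two indices $i\neq j$ (and $w_i^2$ when $i=j$, the extreme cases being handled via the relation $w_{i-1}=d-a_iw_i$) gives
$$w_iw_j\gtrsim \frac{d^2}{4a_ia_j}.$$
The target inequality then follows once $d>\frac{4(n+1)}{n+2}\,a_ia_j$. This holds because $d-1$ equals $a_ia_j$ times the product of the remaining $n-1\geq 3$ exponents, each at least $2$.

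\textbf{Main obstacle.} The constant $\tfrac14$ coming from the crude factor $(1-1/a_{i-1})\geq \tfrac12$ is too lossy exactly when several $a$'s equal $2$ or $3$. I therefore expect the main work to be a refined treatment of this regime. I would first try sharpening Step 1 by keeping the second alternating term exactly. Then I would use the index-one identity of Step 2 to show that at most two consecutive exponents can be tiny, which leaves a finite family of small configurations. Finally, I would check these directly against $\frac{n+2}{n+1}$, the borderline being $n=4$.
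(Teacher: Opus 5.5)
\textbf{The gap: Step 1 assumes condition $K1$ without justification.} The gap is in the first line of Step 1, and it sits exactly where the paper uses the index-one hypothesis. Primitivity of the weights does not give $d=\det A$. The primitive solution of the cycle equations is $\mathbf w=\mathbf v'/v^*$, $d=(1+a_0\cdots a_n)/v^*$, where $\mathbf v'=(v'_0,\dots,v'_n)$ is Koll\'ar's vector (\ref{eq:4.3}) and $v^*=\gcd(v'_0,\dots,v'_n)$. So $\gcd(w_i)=1$ holds automatically and says nothing about $v^*$.

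What you need is condition $K1$, namely $v^*=1$. Two things rest on it: your closed formula for $w_i$ (hence the bound of Step 1 and the estimate), and the rational homology sphere claim (Koll\'ar's $b_{n-1}(L_{f_0})=v^*-1$). The paper gets $K1$ by divisibility. The cycle equations give $v^*\mid 1+a_0\cdots a_n$, and trivially $v^*\mid \sum_i v'_i$. Hence $v^*$ divides $\sum_i v'_i-(1+a_0\cdots a_n)$, which the index hypothesis makes equal to $1$.

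Be aware that this argument needs the index-one condition on Koll\'ar's pair $(\mathbf v',1+\prod_i a_i)$, or $K1$ itself if that is what hypothesis (1) is meant to encode. Index one for the primitive weights is not enough. For $n=4$ and all $a_i=3$ one gets $\mathbf w=(1,\dots,1)$, $d=4$, $I=1$, but $v^*=61$, the link has $b_3=60$, and the estimate fails since $\min_{i\neq j}w_iw_j=1<d$.

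Your Step 2 instead spends the index hypothesis on an Egyptian-fraction heuristic. That step is not needed. It is also not a controlled argument, since the corrections are ``of relative size $1/\min a_j$'' where $\min a_j$ may be $2$.

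\textbf{The estimate itself: Steps 1 and 3 already suffice.} With $K1$ in place, your Steps 1 and 3 already finish the estimate, and the ``main obstacle'' is illusory. For $i\neq j$,
\[ w_iw_j>\frac{(d-1)^2}{4a_ia_j}=\frac{d-1}{4}\prod_{k\neq i,j}a_k\ \ge\ 2^{n-3}(d-1)\ \ge\ 2(d-1)\ \ge\ d, \]
because the product has $n-1\ge 3$ factors, each $\ge 2$. The lost factor $\frac14$ is absorbed no matter how many $a_k$ equal $2$ or $3$, so no finite case check is needed. You even get $d<\min_{i\neq j}w_iw_j$, stronger than the constant $\frac{n+2}{n+1}$. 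The case $i=j$ is irrelevant, since the Johnson--Koll\'ar criterion involves the product of the two smallest weights, with distinct indices.

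This is a genuinely simpler route than the paper's Lemma \ref{lem:4.3.3}. That lemma expands $v_0v_j=1-S_1+S_2-\cdots+S_{2n}$ and compares blocks of terms to reach only $\frac{n+2}{n+1}$. Like that lemma, your bound uses only $a_i\ge 2$ and never the index. Note that $a_i\ge 2$ is a standing assumption there, not something quasi-smoothness gives you. The remaining steps (Johnson--Koll\'ar with $I=1$, then Kobayashi--Boyer--Galicki) are as in the paper.
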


\begin{remark}
Relying on this new estimate, one can show, following a similar arguments as the ones used in Corollary 12 and 13 in \cite{JK1},  that for $X_{f_0}$ 
there is an $\epsilon>0$ such that
$$
\left(X_{f_0}, \frac{n+1+\epsilon}{n+2} D\right) \text { is klt }
$$
for every $D\in |-K_{X_{f_0}}|_{\mathbb Q}.$

\end{remark}

%Cycle polynomials have remarkable properties, for instance consider the  map $$\phi_A: \mathbb{P}(w_0,w_1,\ldots , w_n)\rightarrow  \mathbb{P}^{n}$$ 
%given by 
%$$\left(x_0: x_1: \ldots : x_n\right) \stackrel{\phi_A}{\longmapsto}\left(y_0: y_1:\ldots :y_n\right)$$ with  $\quad y_j=\prod_{i=0}^n x_i^{a_{i j}},$  which maps 
% $\mathbb{P}(w_0,w_1,\ldots  w_n)$ birationally to  $ \mathbb{P}^{n}.$ Kollár showed in \cite{Kol} that if 
% 
% It follows that $\{ f=0\}$ is mapped birationally to the  hyperplane 
%$\left \{ y_0+y_1+\ldots +y_n=0\right \}\subset \mathbb{P}^{n}$ Thus, the cycle polynomial described above  cuts out a weighted hypersurface  which is  birational to $\mathbb P^{n-1}.$

Under the arithmetic restrictions on the weights and index of the weighted hypersurface  $X_{f_0}$, one has  
the  birational map $\phi_A:  \mathbb{P}({\bf w}) \dashrightarrow \mathbb{P}^{n}$  described as above, whose restriction 
$\phi_A|_{_{X_{f_0}}}: X_{f_0} \dashrightarrow \mathbb{P}^{n-1}$ is still birational and induces a correspondence   
$\Phi_A:L_{f_0} \dashrightarrow S^{2n-1}$  on the associated $S^1$-Seifert bundles which is transversely birational. Pictorially, we have the diagram: 
  $$
 \xymatrixcolsep{5pc}
\xymatrix{
 & \mathbb{P}({\bf w}) &  \mathbb{P}^n \ar@{<--}[l]_{\quad \phi_A} \\
 & X_{f_0} \ar@{^{(}->}[u]  &  \ar@{<--}[l]_{\phi_A}\mathbb{P}^{n-1} \ar@{^{(}->}[u] \\
L_f \ar[ru]^{\pi}  & \ar@{<--}[l]_{\Phi_A} S^{2n-1}\ar[ru]^{\pi} & 
}
$$
Here $\pi$ denotes the orbifold Riemannian submersions of the $S^1$-Seifert bundles. In particular, the map $\pi: S^{2 n-1} \rightarrow \mathbb{P}^{n-1}$ is the Hopf fibration which pulls back the Fubini-Study metric of $\mathbb{P}^{n-1}$ to a Sasaki-Einstein metric on the sphere $S^{2 n-1}$ up to rescaling. Thus, the map $\Phi_A$  is  a transversely birational map that  preserves the homology groups at rational level and  the property of being Sasaki-Einstein. Thus, from this diagram, one  can extract  a complex  {\it rational} Hopf fibration $L_f\xrightarrow{\pi} X_{f_0}$ on which we successfully construct Sasaki-Einstein metrics. 
\medskip

On the other hand, due to our improved estimates for the cyclic case, we are able to find conditions on the weights and degree of the Thom-Sebastiani sums of type  $f=f_{I}$ or $f=f_{III}$, so that they admit Kähler-Einstein metrics,  in case the Fano index equals 1 or 2, so  $L_{f}$ admits Sasaki-Einstein metrics for these two cases. We also show that  the Berglund-Hübsch duals for these Sasaki-Einstein links  also admit Sasaki-Einstein structure for these two type of polynomials. Lastly, we study the case $f=f_{II}$ where the Fano index equals $1$ or $2$. For this, it suffices to give some numerical condition on the weights to ensure that dual link  $L_{f^{T}}$ carries a Sasaki-Einstein structure, without assuming existence of Sasaki-Einstein on the original link $L_f.$ However,  this time the weights obtained via the Berglund-Hübsch  are such  the dual polynomial $f^T$ is not well-formed. In the latter case, it is of interest to show that the original link admits Kähler-Einstein metric, however the estimates based on  $\alpha$-invariant used in this paper are  insufficient to address this problem.  The rather recent theory of admissible flags introduced by  Abban an Zhuang in \cite{AZ} along with  its refinements (see for instance  \cite{ACC+}) to compute global $\delta$-invariants  can be useful to  determine whether this is the case, at least on rational homology 7-spheres. We plan to explore this in a future project. 
\medskip

Besides its importance in differential geometry and the strong connections 
 with complex algebraic geometry, it is well known that Sasaki-Einstein metrics  have real Killing
spinors \cite{FK} which play an important role in the context of supergravity and string theory. 
Moreover, the existence of these special metrics  reinforces the validity of the AdS/CFT correspondence from string theory in theoretical physics, a correspondence that establishes connections between quantum field theory and quantum gravity (\cite{GMSY},\cite{XY}, see also \cite{GMS}).

The paper is organized as follows. In Sections 2 and  3 we briefly review the  preliminary material needed to prove our results.  In Sections 4 and 5, using the algorithms given by Milnor and Orlik we obtain the numerical conditions to obtain links with the rational homology of a sphere,  and then study the effect of the Berglund-Hübsch transpose rule on the topology of these links . Finally, in Section 6 and 7, we explain our main results on the existence of Sasaki-Einstein metrics on rational homology spheres and their invariance under the Berglund-Hübsch transpose rule. 

%% file: Chapter3.tex
% !TEX root = Main_SE_RHS_2026.tex

%\section{Sasaki-Einstein metrics and link of hypersurface singularities}
%In  this chapter we study the natural Sasakian structures on links of isolated hypersurface singularities. In the first part, we endow a Sasakian-Einstein structure on links through the deformation of a Sasakian structure \cite{Ta}. Then the Sasakian-Einstein structure on the link is obtained as a consequence of the existence of Kähler-Einstein orbifold metrics on the associated hypersurface (see \cite{BG01}). In the second section, we explain how to determine the topology of the links through well-known formulas deduced in \cite{Mil} and \cite{MO}.

\section{Sasakian-Einstein structure on links}%
A  $(2n+1)$-dimensional compact Riemannian manifold ($M, g$) is said to be a Sasakian manifold if there is a complex structure $J$ on the metric cone  $$(C(M),\bar{g})=(M \times \mathbb{R}^{+}, \bar{g}=d r^2+r^2 g)$$ for which the metric $\bar{g}$ is Kähler. The complex structure can be extended  to the affine cone 
 $\widehat{C}(M):=C(M) \cup 0$ and one obtains a  Kähler form  $\omega_{\widehat{C}(M)}:=\frac{\sqrt{-1}}{2} \partial \bar{\partial} r^2$ on the affine cone.  
 From this definition, one obtains three important tensors $(\xi, \eta, \Phi)$ associated to the Sasakian structure, see \cite{BG, Sp} for a complete description of these structural tensors. For instance the first tensor $\xi$ is obtained as follows: consider 
 the vector field   $$\hat{\xi}:=-J\left(r \frac{\partial}{\partial r}\right)$$  on the cone  defined. It is not difficult to show that $\hat{\xi}$ is real holomorphic Killing vector field and with norm $\bar{g}(\hat{\xi}, \hat{\xi})=r^2.$ Its restriction $\hat{\xi}|_{r=1},$ denoted by $\xi$, is  {\it Reeb vector field} on $M.$   
The closure of the flow generated by the Reeb vector field $\xi$ is a  $k(\xi)$-dimensional compact torus. We denote its complexification by $\left(\mathbb{C}^*\right)^{k(\xi)}$. If $k(\xi)=1$ we say that $(M, g)$  is quasi-regular and, if in  addition,  the action induced by $\mathbb{C}^*$ on $C(M)$ is free, we say that $(M, g)$ is regular.  The pair  $(M, g)$ is said to be irregular   if $k(\xi)>1.$ 

\subsubsection*{Sasaki-Einstein structures} 
The proposition below is a well-known result in Sasakian geometry \cite{BG,Sp}:
\begin{proposition} The  following are equivalent:
\begin{itemize}
\item  $(C(M), \bar{g})$ is a Ricci-flat Kähler cone metric.
\item ($M, g$) satisfies $\operatorname{Ric}_g=2n g.$ 
\end{itemize}
Moreover, if the Sasakian structure is quasi-regular, then  $(M, g)$  is a Sasaki-Einstein manifold if and only if the quotient $$C(M) /\mathbb{C}^*=M/S^1$$ admits  a natural branch divisor $\Delta$ so that $\left(C(M) / \mathbb{C}^*, \Delta\right)$ is a log  $\mathbb{Q}$-Fano variety with  Kähler-Einstein orbifold metric.
\end{proposition}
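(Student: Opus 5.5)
The plan is to treat the two assertions separately: the first is a pointwise curvature computation on the warped product $C(M)$, and the second is a transverse reduction along the Reeb foliation followed by the orbifold Kobayashi construction of Boyer and Galicki.

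\emph{Equivalence of the two bullet points.} The cone $(C(M),\bar g)=(M\times\mathbb R^+, dr^2+r^2g)$ is a warped product with warping function $r$ over a base of dimension $m=2n+1$. The O'Neill (warped product) formulas give three pieces.
\begin{itemize}
\item $\operatorname{Ric}_{\bar g}(\partial_r,\partial_r)=0$.
\item $\operatorname{Ric}_{\bar g}(\partial_r,X)=0$.
\item $\operatorname{Ric}_{\bar g}(X,Y)=\operatorname{Ric}_g(X,Y)-(m-1)\,g(X,Y)$ for $X,Y$ tangent to $M$.
\end{itemize}
The first two are exactly where the particular warping $r$ enters, since the relevant terms are proportional to $f''=0$. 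Hence $\bar g$ is Ricci-flat if and only if $\operatorname{Ric}_g=2n\,g$. The Kähler property of $\bar g$ is part of the hypothesis that $M$ is Sasakian, so nothing more is needed for this part.

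\emph{Transverse reduction.} For a Sasakian structure $(\xi,\eta,\Phi,g)$ I would use the standard identities, valid with $X,Y$ sections of the contact distribution $\mathcal D=\ker\eta$:
$$\operatorname{Ric}_g(X,\xi)=0,\qquad \operatorname{Ric}_g(\xi,\xi)=2n,\qquad \operatorname{Ric}_g(X,Y)=\operatorname{Ric}^T(X,Y)-2\,g(X,Y).$$
These follow from the O'Neill formulas for the Riemannian submersion onto local leaf spaces, together with the Sasakian relation $\nabla\xi=-\Phi$. Consequently $\operatorname{Ric}_g=2n\,g$ holds if and only if the transverse Kähler metric $g^T$ satisfies $\operatorname{Ric}^T=(2n+2)\,g^T$, that is, $g^T$ is transversely Kähler--Einstein with positive constant.

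\emph{Quasi-regular case.} Now suppose the structure is quasi-regular. The Reeb flow generates a locally free $S^1$-action, so $Z=M/S^1=C(M)/\mathbb C^*$ is a compact complex orbifold, and $g^T$ descends to an orbifold Kähler metric $h$ on $Z$.
\begin{enumerate}
\item The orbifold structure is recorded by the codimension-one isotropy: divisors $D_i$ with ramification indices $m_i$. This gives the branch divisor $\Delta=\sum_i(1-\tfrac1{m_i})D_i$, and the orbifold first Chern class is $c_1^{orb}(Z)=c_1\bigl(-(K_Z+\Delta)\bigr)$.
\item If $h$ is Kähler--Einstein with positive constant, then $-(K_Z+\Delta)$ is ample. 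The pair $(Z,\Delta)$ has only quotient singularities and standard coefficients, so it is klt, and hence $(Z,\Delta)$ is a log $\mathbb Q$-Fano variety with a Kähler--Einstein orbifold metric.
\item For the converse, given an orbifold KE metric on $(Z,\Delta)$, I would invoke the Kobayashi--Boyer--Galicki construction. Choose on the Seifert $S^1$-orbibundle $M\to Z$ a connection $\eta$ with $d\eta=2\pi^*\omega_h$, and form $g=\pi^*h+\eta\otimes\eta$. Then the identities in the transverse reduction give $\operatorname{Ric}_g=2n\,g$.
\end{enumerate}

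\emph{Main obstacle.} The delicate step is the converse: one must check that the Euler class of the Seifert bundle $M\to Z$ is a positive rational multiple of $c_1^{orb}(Z)$. Only then is the curvature condition $d\eta=2\pi^*\omega_h$ cohomologically consistent with the given Sasakian structure, possibly after a transverse homothety (a $D$-homothetic deformation) that rescales $\eta$ to fix the Einstein constant. I would obtain this from the fact that $\widehat C(M)$ is the affine cone over $Z$, so that $C(M)$ is the total space of a root of $K_Z^{orb}$ minus its zero section. For this identification I would cite \cite{BG}.
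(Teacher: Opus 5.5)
The paper gives no proof of this proposition; it quotes it from Boyer--Galicki and Sparks. Your warped-product computation is correct, and so is the transverse identity $\operatorname{Ric}_g|_{\mathcal D}=\operatorname{Ric}^T-2g^T$. In the quasi-regular case these already give the precise form of the second assertion: a given Sasakian metric $g$ is Einstein if and only if the orbifold metric $h$ it induces on $Z=M/S^1$ satisfies $\operatorname{Ric}(h)=(2n+2)h$. Your argument that $(Z,\Delta)$, with $\Delta=\sum(1-1/m_i)D_i$, is then a klt log $\mathbb{Q}$-Fano pair is also fine.

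The gap is in step (3), where you start from an arbitrary orbifold K\"ahler--Einstein metric on $(Z,\Delta)$ and want a Sasaki--Einstein metric on $M$.

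\begin{itemize}
\item As you note, this needs the Euler class $e$ of the Seifert bundle $M\to Z$ to be a positive multiple of $c_1^{orb}(Z)=c_1(-(K_Z+\Delta))$. A $D$-homothety only rescales $e$, so it can adjust the Einstein constant but cannot create this proportionality.
\item Your justification, that $C(M)$ is the total space of a root of $K_Z^{orb}$ minus its zero section, does not follow from quasi-regularity. In general $C(M)$ is $L^{-1}$ minus its zero section, where $L$ is the positive orbi-line bundle with $c_1(L)=e$. Asking that $L$ be a root of $-(K_Z+\Delta)$ is exactly the proportionality you are trying to prove.
\item It genuinely fails. Take the regular Sasakian structure on the circle bundle over $\mathbb{P}^1\times\mathbb{P}^1$ with Euler class $(1,2)$. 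Its leaf space is Fano and K\"ahler--Einstein with $\Delta=0$. Yet no Sasakian metric with this Reeb foliation and leaf space is Einstein: that would force $c_1^B$ to be a positive multiple of $[d\eta]_B$, i.e.\ $(2,2)$ a positive multiple of $(1,2)$.
\end{itemize}

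So the ``if'' direction, read as an existence statement, cannot be proved for arbitrary quasi-regular $M$. It needs $e=\lambda\,c_1^{orb}(Z)$ with $\lambda>0$ as an extra hypothesis, as in the orbifold Kobayashi construction, where the $S^1$-orbibundle is chosen with Euler class $c_1^{orb}(Z)/I$. Where the paper actually applies the proposition, this hypothesis does hold. There $C(L_f)=V_f\setminus\{0\}$ is $\mathcal{O}_{X_f}(-1)$ minus its zero section, and $K_{X_f}=\mathcal{O}_{X_f}(d-|\mathbf{w}|)=\mathcal{O}_{X_f}(-I)$ for well-formed $X_f$. Hence $e=c_1^{orb}(X_f)/I$.
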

 The last statement, sometimes referred as the {\it Kobayashi-Boyer-Galicki correspondence}, is a generalization of a result of Kobayashi \cite{K} for smooth manifolds, given in \cite{BG01}. In this article, we mostly work with orbifolds with no branch divisors, that is, $\Delta$=0.

\subsubsection*{Links of  weighted homogenous  isolated singularities}

From the Kobayashi-Boyer-Galicki correspondence,  one can obtain  Sasaki-Einstein metrics from orbifolds embedded in weighted projective spaces. 
 Recall  weighted projective space $$\mathbb {P}(w_0, w_1 \ldots w_n)=(\mathbb{C}^{n+1}-\{{\bf 0}\})/\mathbb C^*,$$  where the $\mathbb C^*$-action is the one induced by 
$\xi_{\bf w}$  and can be given by the equivalence relation 
$(z_0, \ldots, z_n) \sim\ (\lambda^{w_0}z_0, \ldots, \lambda^{w_n}z_n)$ for  $\lambda \in \mathbb{C}-\{0\}.$     We say that the weighted projective space $\mathbb{P}(w_{0},\dots,w_{n})$ is \textit{well-formed} if $\gcd(w_{0},\dots,\hat{w}_{i},\dots,w_{n})=1$, for each $i$.

A polynomial $f \in \mathbb{C}\left[z_0, \ldots, z_n\right]$ is said to be a weighted homogeneous polynomial of degree $d$ and weight vector $\mathbf{w}=$ $\left(w_0, \ldots, w_n\right)$,  if for any $\lambda \in \mathbb{C}^{*}$
$$
f\left(\lambda^{w_0} z_0, \ldots, \lambda^{w_n} z_n\right)=\lambda^d f\left(z_0, \ldots, z_n\right).$$
From the affine algebraic variety  $V_f=\{f=0\} \subset \mathbb{C}^{n+1}$ one constructs the  weighted hypersurface 
$$Z_f=(V_f-\{\mathbf{0}\})/ \mathbb{C}^* \subset Y=\mathbb{P}(\mathbf{w}).$$ 
If the cone $V_f$ is smooth everywhere except at the origin in $\mathbb{C}^{n+1}$ one says that  $Z_f$  is quasismooth. Notice that  quasismooth weighted hypersurfaces  have  only cyclic singularities. The weighted hypersurface is well-formed if $\mathbb{P}\left(w_0, \ldots, w_{n}\right)$ is well-formed and  the intersection of  $Z_f$ with the singular set of $\mathbb{P}\left(w_0, \ldots, w_{n}\right)$  has codimension at least 2 in $Z_f$. Recall that the Fano index is defined as the largest integer such that $\frac{c_1(-K_X)}{I}$ is an integer class (in the orbifold sense). In case our orbifold is given by 
a  weighted hypersurface  degree $d$, the index  is given by  $I=|\mathbf{w}|-d$ where $|\mathbf{w}|$ denotes the sum of the weights, see \cite{BG}.  When $Z_f$ is well-formed, the canonical divisor satisfies the adjunction formula $K_{Z_f}=\mathcal{O}_{Z_{f}}\left(d-|\mathbf{w}|\right).$

 For well-formedness of the weighted variety we have the following criterion \cite{IF}: 
 \begin{lem}\label{lem:2.1}
 A hypersurface defined by the weighted homogeneous polynomial $f$ of degree $d$ is well-formed in the well-formed weighted projective space 
 $\mathbb{P}\left(w_0, \ldots, w_n\right)$ if 
 $$\operatorname{gcd}\left(w_0, \ldots, \hat{w}_i, \ldots, \hat{w}_j, \ldots, w_n\right) \mid d$$
for distinct $i, j=0, \ldots, n$.  
\end{lem}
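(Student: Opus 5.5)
The plan is to reduce well-formedness of $Z_f$ to a statement about the singular strata of $\mathbb{P}(\mathbf{w})$ of codimension one in $Z_f$. We then show that the divisibility hypothesis (together with the standing quasi-smoothness/genericity of $f$) prevents $Z_f$ from containing any such stratum. Throughout, $\dim Z_f=n-1$, and we must show that $Z_f\cap \operatorname{Sing}\mathbb{P}(\mathbf{w})$ has dimension at most $n-3$.

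\emph{Step 1: describe the singular locus.} For an integer $h>1$, put $I_h=\{k : h\mid w_k\}$ and let $\Pi_h=\{z_k=0 \text{ for } k\notin I_h\}\subset\mathbb{P}(\mathbf{w})$. The singular locus of $\mathbb{P}(\mathbf{w})$ is the union of the $\Pi_p$ over primes $p$, since a point has nontrivial stabiliser exactly when its nonzero coordinates have weights with a common factor. Moreover $\dim\Pi_h=|I_h|-1$. Well-formedness of $\mathbb{P}(\mathbf{w})$ says that no prime divides $n$ of the weights, so $|I_p|\le n-1$ and $\dim \Pi_p\le n-2$. Strata with $|I_p|\le n-2$ have dimension at most $n-3$ and are harmless. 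The only dangerous strata are those with $|I_p|=n-1$, i.e.\ $I_p=\{0,\dots,n\}\setminus\{i,j\}$. In that case $p\mid h_{ij}:=\gcd(w_0,\ldots,\hat w_i,\ldots,\hat w_j,\ldots,w_n)$, and $\Pi_p=\Pi_{ij}:=\{z_i=z_j=0\}\cong\mathbb{P}(w_k : k\ne i,j)$ has dimension $n-2$.

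\emph{Step 2: show $Z_f\not\supset \Pi_{ij}$.} Since $\Pi_{ij}$ is irreducible of dimension $n-2$, its intersection with $Z_f$ has dimension at most $n-3$ unless $f|_{\Pi_{ij}}\equiv 0$. We therefore need a monomial of $f$ involving only the variables $z_k$ with $k\neq i,j$.

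\begin{itemize}
\item For a general $f$ of degree $d$, such a monomial exists precisely when some monomial in these variables has degree $d$. Every such monomial has degree divisible by $h_{ij}$, so the hypothesis $h_{ij}\mid d$ is exactly what allows this; for general $f$ and weights that are not too small this is the argument I would run. Conversely, if $h_{ij}\nmid d$, every monomial of $f$ contains $z_i$ or $z_j$, and $Z_f\supset\Pi_{ij}$.
\item For a quasi-smooth $f$ (the case relevant for the invertible polynomials of this paper), I would argue by contradiction instead. If $f\in(z_i,z_j)$, then along the affine cone of $\Pi_{ij}$, of dimension $n-1$ in $\mathbb{C}^{n+1}$, the gradient of $f$ reduces to $(\partial_{z_i}f,\partial_{z_j}f)$. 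These are two weighted homogeneous equations, so their common zero set in that cone has dimension at least $n-3\ge 1$ for $n\ge 4$. This produces singular points of $V_f$ away from the origin, contradicting quasi-smoothness.
\end{itemize}

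\emph{Step 3: conclude.} In either case $f|_{\Pi_{ij}}\not\equiv0$, so $\dim(Z_f\cap\Pi_{ij})\le n-3$. Combining this with Step 1 gives $\operatorname{codim}_{Z_f}(Z_f\cap\operatorname{Sing}\mathbb{P}(\mathbf{w}))\ge 2$, which is well-formedness.

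The main obstacle is Step 2. Divisibility alone only guarantees that degree-$d$ monomials in the remaining variables can exist, not that $f$ actually contains one. One therefore has to invoke either genericity of $f$ (as in Iano-Fletcher's setting) or the quasi-smoothness argument above, and in the latter case check the dimension count carefully in small $n$.
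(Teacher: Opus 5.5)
The paper does not prove this lemma; it quotes the criterion from \cite{IF}, in a setting where $f$ is tacitly quasi-smooth. Your Steps 1 and 3 are sound. (The identification of $\operatorname{Sing}\mathbb{P}(\mathbf{w})$ with the points of nontrivial isotropy uses that well-formedness rules out quasi-reflections, and you should say so.) Step 2, however, has a genuine gap: neither bullet proves the statement.

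In the first bullet, $h_{ij}\mid d$ is necessary but not sufficient for a degree-$d$ monomial in the $z_k$, $k\neq i,j$, to exist. Take the well-formed space $\mathbb{P}(1,1,6,10)$ with $d=8$. Every $h_{ij}$ divides $d$ (only $h_{01}=2$ is nontrivial), yet every octic lies in $(z_0,z_1)$. So every octic contains the curve $\{z_0=z_1=0\}\subset\operatorname{Sing}\mathbb{P}(1,1,6,10)$, and no octic is well-formed. The ``general $f$'' reading therefore cannot be proved this way; it is in fact false without an extra hypothesis such as quasi-smoothness.

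In the second bullet your dimension count is correct for $n\geq 4$ (away from linear cones), but it never uses $h_{ij}\mid d$. What it shows is that for $n\geq 4$ quasi-smoothness alone forces $f\notin(z_i,z_j)$. You therefore prove the lemma only in the range where its hypothesis is superfluous; incidentally, that range already covers its uses in Lemmas~\ref{lem:4.1.1} and~\ref{rem:4.1.5}. You prove nothing for $n=2,3$, where the hypothesis genuinely matters. For example, $f=z_0z_2+z_1z_3+z_0^3+z_1^3$ in $\mathbb{P}(1,1,2,2)$ is quasi-smooth, has $h_{01}=2\nmid 3$, and contains the singular curve $\{z_0=z_1=0\}$.

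The missing idea is a short congruence argument that uses the hypothesis and works for every $n\geq 2$, linear cones included.
Suppose $h_{ij}>1$ and $f\in(z_i,z_j)$, say $f=z_iG+z_jH$.
The punctured affine cone over $\Pi_{ij}$ is nonempty and lies in $V_f$. Along it, $\partial_{z_k}f=0$ for $k\neq i,j$, while $\partial_{z_i}f=G$ and $\partial_{z_j}f=H$. So quasi-smoothness forces $G$ or $H$, say $G$, to be not identically zero there.
Hence $f$ contains a monomial $z_iM$ with $M$ a monomial in the $z_k$, $k\neq i,j$; it cannot cancel against $z_jH$.
Then $h_{ij}\mid\deg M=d-w_i$, and together with $h_{ij}\mid d$ this gives $h_{ij}\mid w_i$.
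So $h_{ij}>1$ divides the $n$ weights $w_k$, $k\neq j$, contradicting well-formedness of $\mathbb{P}(\mathbf{w})$.
Thus $f|_{\Pi_{ij}}\not\equiv 0$, and your Step 3 concludes.
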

\medskip
 
 For  quasismooth weighted homogeneous polynomial $f$, the link 
$$
L_f=V_f \cap S^{2n+1},
$$
with  $S^{2n+1}$  a sufficiently small $(2n+1)$-sphere centered at the origin,  is a closed $(n-2)$-connected $(2n-1)$-manifold that bounds a parallelizable manifold with the homotopy type of a bouquet of $n$-spheres \cite{Mil}. The weighted vector field $\xi:=\sum_{i=0}^n w_i z_i \partial_{z_i}$ is the  Reeb vector field on $V_f$  which  induces the  weighted $\mathbb{C}^*$-action on $V_f$ that restricts to a weighted $S^1$-action on $S^{2n+1}$. It is well-known  that  $L_f$ admits a quasi-regular Sasaki structure by restricting the weighted Sasakian structure of the sphere to the link. Moreover, the quotient space of the link $L_f$ by the weighted  $S^1$-action is  the weighted hypersurface $Z_f$, a Kähler orbifold embedded in some weighted projective space 
${\mathbb P}({\bf w})$.  We have the following commutative diagram \cite{BG}:

\begin{equation*}
\begin{CD} 
 L_{f} @> {\qquad\qquad}>>  S^{2n+1}_{\bf w}\\
@VV{\pi}V  @VVV\\
X_{f}  @> {\qquad\qquad}>>  {\mathbb P}({\bf w}),
\end{CD}
\end{equation*}
where $S_{\mathbf{w}}^{2 n+1}$ denotes the  $(2n+1)$-sphere with a weighted Sasakian structure. The top horizontal arrow is a Sasakian embedding and the bottom arrow is a Kählerian embedding and  the vertical arrows are orbifold Riemannian submersions. 
From the Kobayashi-Boyer-Galicki correspondence, finding Sasaki-Einstein metrics on the link $L_f$ boils down to  finding Kähler-Einstein orbifold metric on $X_f.$ 
 
In \cite{Ti} Tian showed that a  Fano manifold of dimension $n$  with $\alpha$-invariant  greater
than $\frac{n}{n+1}$ admits a Kähler–Einstein metric. A generalization of this result for Fano varieties with
quotient singularities was given by Demailly and Koll\'ar (see \cite{DK}, Criterion 6.4). For the particular case of a Fano variety given as a  quasismooth weighted hypersurface,  this criterion gives  the  following estimate, whose proof can be found in \cite{JK}. 

\begin{theo}\label{prop:1.5.8}
Let $Z_f \subset \mathbb{P}\left(w_0, \ldots w_n\right)$ be a quasismooth weighted homogeneous Fano hypersurface of degree $d$. Then $Z_f$ admits a Kähler-Einstein orbifold metric if the following estimate holds:
\begin{equation}\label{eq:3.1}
d I<\frac{n}{n-1} \min _{i, j}\left\{w_i w_j\right\}. 
\end{equation}
Equivalently, the corresponding  link $L_f$ admits Sasaki-Einstein metric. 
\end{theo}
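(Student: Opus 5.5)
The plan is to deduce the estimate from the Demailly--Koll\'ar criterion (\cite{DK}, Criterion 6.4) applied to the $(n-1)$-dimensional orbifold $Z_f$. In that dimension the criterion reads: if there is $\epsilon>0$ such that
$$\Bigl(Z_f,\tfrac{n-1+\epsilon}{n}D\Bigr)\ \text{is klt for every } D\in|-K_{Z_f}|_{\mathbb Q},$$
then $Z_f$ carries a K\"ahler--Einstein orbifold metric. Once that metric exists, the Kobayashi--Boyer--Galicki correspondence recalled above (the Proposition of this section) gives the Sasaki--Einstein metric on $L_f$.

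First I would fix the adjunction input. Since $Z_f$ is quasismooth, and assumed well-formed so that adjunction holds, we have $-K_{Z_f}=\mathcal O_{Z_f}(I)$. Hence every $D\in|-K_{Z_f}|_{\mathbb Q}$ has the form $D=\tfrac1m(s=0)$ with $s$ a section of $\mathcal O_{Z_f}(mI)$. The criterion is then reduced to a multiplicity bound. In a local uniformizing chart $\widetilde U\to U=\widetilde U/\mu_r$ around a point $p$, the klt property can be checked upstairs. There a pair $(\widetilde U,c\widetilde D)$ is klt at $\tilde p$ as soon as $c\cdot\mathrm{mult}_{\tilde p}\widetilde D<1$. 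So it suffices to prove
\begin{equation*}
\mathrm{mult}_{\tilde p}\widetilde D\le \frac{I\,d}{\min_{i,j}\{w_iw_j\}}\quad\text{for all } p\in Z_f .
\end{equation*}
Indeed, with $c=\tfrac{n-1+\epsilon}{n}$, hypothesis \eqref{eq:3.1} gives $c\cdot Id/\min w_iw_j<1$ for $\epsilon$ small. Since there are only finitely many chart types and the bound is uniform, one $\epsilon$ works for all $p$ and all $D$.

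The core step, and the expected main obstacle, is this multiplicity bound. I would prove it by choosing a curve $C\subset Z_f$ through $p$ that is not contained in $\mathrm{Supp}\,D$. Then
$$\mathrm{mult}_{\tilde p}\widetilde D\le(\widetilde D\cdot\widetilde C)_{\tilde p}\le r\,(D\cdot C),$$
where the second inequality compares the local intersection upstairs with the global one downstairs, up to the chart order $r$. Concretely, say $p$ lies in the chart $x_k\neq0$, so $r\mid w_k$. I would cut $Z_f$ by $n-2$ general members of the linear systems $|\mathcal O(N_\ell)|$ passing through $p$, with degrees chosen as multiples of the weights. Computing
$$D\cdot C=\frac{I\,d\,\prod_\ell N_\ell}{\prod_i w_i}$$
and optimizing the $N_\ell$ should leave exactly two weights uncancelled in the denominator. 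Together with the local factor $r\le w_k$, this yields the bound $Id/(w_iw_j)$ with $\{i,j\}$ depending on $p$, hence the bound by $Id/\min w_iw_j$.

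The delicate points are two. First, one must guarantee that such general sections through $p$ exist and cut out a curve meeting $D$ properly; if $C$ happens to lie in $D$, I would move $C$ within its family, or replace $D$ by its components not containing $C$. Second, one must handle points on the singular strata, where several $x_k$ vanish. There I would first try a case split according to which coordinates vanish at $p$, using quasismoothness to guarantee that enough monomials $x_i^{a}x_j$ appear in $f$, in the spirit of the argument in \cite{JK}.
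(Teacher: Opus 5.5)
The paper gives no proof of this statement; it cites \cite{JK}. Your reduction is the standard one and is fine. The Demailly--Koll\'ar criterion plus checking klt in orbifold charts reduces everything to the uniform bound $\mathrm{mult}_{\tilde p}\widetilde D\le Id/\min_{i\ne j}w_iw_j$, and that bound gives a single $\epsilon$ for all $D$.

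There is a genuine gap in the core step. You do not carry it out, and the construction you sketch for it does not work.

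\textbf{Why the complete-intersection curve fails.} To get $r\,(D\cdot C)\le Id/(w_iw_j)$ from a complete intersection you need $r\prod_\ell N_\ell\le\prod_{k\ne i,j}w_k$. This essentially forces $n-2$ \emph{moving} hypersurfaces of degree exactly $w_k$ through $p$. But $H^0(\mathcal O(w_k))$ is often spanned by $x_k$ alone. For the weights $(3073,712,2211,151,1199)$ of Example~\ref{ex:4.3.6}, every $|\mathcal O(w_k)|$ is a single coordinate hyperplane, which misses every point with all coordinates nonzero.

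If you instead use binomials $x_k^{w_l/g}-\lambda x_l^{w_k/g}$ of degree $\operatorname{lcm}(w_k,w_l)$, the parameters $\lambda$ are forced by $p$. The component through $p$ is then the curve $C_p=Z_f\cap S_p$, where $S_p$ is the closure of the locus whose coordinates other than $x_i,x_j$ are weighted-proportional to those of $p$. This curve has the right degree, but it is uniquely determined by $p$ and $\{i,j\}$. So it cannot be moved off $\mathrm{Supp}\,D$, and for $\dim Z_f\ge 2$ a suitable $D$ contains all of these finitely many curves. Discarding the components of $D$ that contain $C$ is not allowed either: they contribute to the very multiplicity $\mathrm{mult}_{\tilde p}\widetilde D$ you are bounding.

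\textbf{The missing idea: a genuinely moving family on the affine cone.} Work on $V=\{f=0\}\subset\mathbb C^{n+1}$.
\begin{itemize}
\item Write $D=\frac1m(s=0)$ with $s$ a polynomial of degree $mI$, choose $\hat p\in V$ over $p$, and let $w_0\le w_1$ be the two smallest weights.
\item Realize $\widetilde U$ as a $\mu_r$-stable slice through $\hat p$. The map $\widetilde U\times\mathbb C^*\to V$, $(u,\lambda)\mapsto\lambda\cdot u$, is \'etale near $(\hat p,1)$. Together with $s(\lambda\cdot u)=\lambda^{mI}s(u)$, this gives $\mathrm{mult}_{\hat p}(s|_V)=m\,\mathrm{mult}_{\tilde p}\widetilde D$.
\item Restrict to the $2$-planes $\Pi=\{x_l=\hat p_l+\alpha_l(x_0-\hat p_0)+\beta_l(x_1-\hat p_1),\ l\ge 2\}$. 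Since $w_0,w_1\le w_l$, the restrictions $F=f|_\Pi$ and $G=s|_\Pi$ are polynomials in $(x_0,x_1)$ of weighted degree at most $d$ and $mI$ for the weights $(w_0,w_1)$.
\item For general $(\alpha,\beta)$, the curve $\Pi\cap V$ is smooth at $\hat p$, because $V$ is smooth there by quasismoothness. Also $F$ and $G$ have no common factor, because $V\cap\{s=0\}$ has codimension $2$ in $\mathbb C^{n+1}$.
\item Hence $\mathrm{mult}_{\hat p}(s|_V)\le (F\cdot G)_{\hat p}\le mI\,d/(w_0w_1)$, by Bézout on $\mathbb P(w_0,w_1,1)$ after homogenizing with a weight-one variable.
\end{itemize}
This is your desired bound at every point at once. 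It needs no case split over singular strata and no local factor $r$ to track. In $\mathbb P(\mathbf w)$ these test surfaces are images of $\mathbb P(w_0,w_1,1)$ under maps that use an auxiliary weight-one variable. They are not complete intersections of members of $|\mathcal O(N_\ell)|$, which is why your construction cannot produce them.
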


%\begin{remark} The criterion above  is useful when all weights
%are relatively big, however, there are restrictions to the size of the Fano index. This is due to an obstruction to the existence of Sasaki-Einstein (and therefore to the existence of Kähler-Einstein metrics on the corresponding Fano orbifold) discovered in \cite{GMSY}, known as the Lichnerowicz obstruction which for 
%weighted hypersurfaces of degree $d$ in $\mathbb{P}\left(w_0, \ldots w_n\right)$ reads as follows: if 
%$$
%I=|\mathbf{w}|-d>n \min _i w_i,
%$$
%then the link $L_f$ does not admit a Sasaki-Einstein metric.
%\end{remark}
%

As an important  subset of quasis-smooth weighted homogeneous polynomials  we can consider invertible polynomials.  
%An invertible polynomial  is of the form 
%$$f=\sum_{i=1}^n \prod_{j=1}^n x_j^{a_{i j}},$$ where the matrix of exponents $A=\left(a_{i j}\right)_{i, j=1}^n$ is a non-negative integer-valued matrix which is invertible over $\mathbb{Q}$ and where $f$ is weighted homogeneous and  quasismooth. 
Due to the Kreuzer-Skarke classification of invertible polynomials \cite{KS} we know that any invertible polynomial, up to permutation of variables, can be written as a Thom-Sebastiani sum of three types of polynomials usually called atoms:
\begin{enumerate}
\item  Fermat (or Brieskorn-Pham) type: $w=x^a$,
\item Chain type: $w=x_0^{a_0} x_1+x_1^{a_1} x_2+\ldots+x_{n-1}^{a_{n-1}} x_n+x_n^{a_n}$, and
\item Loop or cycle type: $w=x_0^{a_0} x_1+x_1^{a_1} x_2+\ldots+x_{n-1}^{a_{n-1}} x_n+x_n^{a_n} x_0$.
\end{enumerate}
The polynomials that can be expressed as the sum of atomic polynomials $f$ and $g$:
$$f+g=f(z_{0},\dots, z_{j})+g(z_{j+1},\dots, z_{n})$$
are called \textit{Thom-Sebastiani sums} which are again invertible.

\medskip

\section{Topology of links}
 The fact that the link $L_{f}$ is $(n-2)$-connected implies that the only homology groups $H_{k}(L_{f},\mathbb{Z})$ that can be non-trivial is when $k=0,n-1,n,2n-1$.

From  the Wang exact sequence of the Milnor fibration we have (after applying the Alexander duality and the Poincaré duality):
\begin{equation}\label{eq:3.4}
0\longrightarrow H_{n}( L_{f},\mathbb{Z})\longrightarrow H_{n}(F,\mathbb{Z})\xrightarrow{\mathbb{I}-h_{*}} H_{n}(F,\mathbb{Z})\longrightarrow H_{n-1}(L_{f},\mathbb{Z})\longrightarrow0.
\end{equation}
Here, the map $h$ represents the \textit{monodromy map}, which for weighted polynomials is  obtained from the weights $w_{j}$'s defining the  $\mathbb{C}^{*}$-action on $\mathbb{C}^{n+1}.$ 
Since (\ref{eq:3.4}) is exact, we have
$$H_{n}( L_{f},\mathbb{Z})\cong \ker(\mathbb{I}-h_{*}) \quad \text{and} \quad H_{n-1}(L_{f},\mathbb{Z})\cong \operatorname{coker}(\mathbb{I}-h_{*}).$$
Clearly, $H_{n}( L_{f},\mathbb{Z})$ is a free group, while $H_{n-1}(L_{f},\mathbb{Z})$ can admit torsion. Also notice that  $H_{n-1}(L_{f},\mathbb{Z})_{free}\cong H_{n}(L_{f},\mathbb{Z})$. Thus, we will only  compute $H_{n-1}(L_{f},\mathbb{Z})$. For this, we use the Alexander polynomial of the link.
\begin{defi}\label{def:3.2.2}
    We define the \textit{Alexander polynomial} of the link $L_{f}$ as the characteristic polynomial of $h_{*}$:
    $$  \Delta(t)=\det(t\mathbb{I}-h_{*}).$$
\end{defi}
From the exact sequence in (\ref{eq:3.4}),  one has to determine  $\Delta(1)$ to determine $H_{n-1}(L_{f},\mathbb{Z})$. 
Here, we have the following results given in \cite{BG} or \cite{D}.
\begin{lem}\label{lem:3.2.3}
    Let $\Delta(t)$ be the Alexander polynomial of the link $L_{f}$. Then
    \begin{itemize}
        \item[i)] The link $L_{f}$ is a rational homology sphere if and only if $\Delta(1)\neq0$.
        \item[ii)] The link $L_{f}$ is a homology sphere if and only if $|\Delta(1)|=1$.
        \item[iii)] If $L_{f}$ is a rational homology sphere, then the order of $H_{n-1}(L_{f},\mathbb{Z})$ is equal to $|\Delta(1)|$.
    \end{itemize}
\end{lem}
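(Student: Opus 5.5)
The statement to be proved is Lemma \ref{lem:3.2.3}. I will read everything off the Wang-type exact sequence (\ref{eq:3.4}), applied to the link $L_f$ of a quasismooth weighted homogeneous polynomial in $\mathbb{C}^{n+1}$. Set $\mu=\operatorname{rank} H_n(F,\mathbb{Z})$, the Milnor number, and write $A=\mathbb{I}-h_*$. This is an endomorphism of the free abelian group $H_n(F,\mathbb{Z})\cong\mathbb{Z}^\mu$. By Definition \ref{def:3.2.2},
$$\Delta(1)=\det(\mathbb{I}-h_*)=\det A.$$
So all three assertions become statements about an integer square matrix $A$, using
$$H_n(L_f,\mathbb{Z})\cong\ker A,\qquad H_{n-1}(L_f,\mathbb{Z})\cong\operatorname{coker}A .$$

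\emph{Reduction to middle degrees.} $L_f$ is a closed, orientable, $(n-2)$-connected $(2n-1)$-manifold. Hence by Hurewicz and Poincaré duality its homology can be nonzero only in degrees $0,n-1,n,2n-1$, and in degrees $0$ and $2n-1$ it is $\mathbb{Z}$. For $n\geq 2$ there is also the degenerate case $n-1=0$ ($n=1$, where $L_f$ is a union of circles). There the same sequence with reduced homology gives $\tilde H_0\cong\operatorname{coker}A$, so the argument is uniform. Consequently, $L_f$ is a rational homology sphere exactly when $H_{n-1}$ and $H_n$ are torsion. Since $H_n\cong\ker A$ is free, this means $H_n=0$ and $\operatorname{coker}A$ is finite. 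It is an integral homology sphere exactly when $H_n=0=H_{n-1}$.

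\emph{Linear algebra.} Tensor with $\mathbb{Q}$. Then $\ker A\otimes\mathbb{Q}=\ker(A_\mathbb{Q})$ and $\operatorname{coker}A\otimes\mathbb{Q}=\operatorname{coker}A_\mathbb{Q}$, and each of these vanishes iff $\det A\neq 0$. This proves (i).

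For (iii), suppose $\det A\neq0$. Put $A$ in Smith normal form, $A=PDQ$ with $P,Q\in\GL_\mu(\mathbb{Z})$ and $D=\operatorname{diag}(d_1,\ldots,d_\mu)$. Then
$$\operatorname{coker}A\cong\bigoplus_i\mathbb{Z}/d_i,$$
whose order is $\prod|d_i|=|\det A|=|\Delta(1)|$.

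For (ii), $L_f$ is a homology sphere iff it is a rational homology sphere with $\operatorname{coker}A=0$. By (iii) that is iff $|\Delta(1)|=1$. Conversely, $|\Delta(1)|=1$ gives $\Delta(1)\neq0$, so by (i) $L_f$ is a rational homology sphere, and by (iii) its torsion group $H_{n-1}$ is trivial.

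\emph{Main obstacle.} The only nonformal input is the exactness of (\ref{eq:3.4}), including its zero ends, together with the identification of its maps. This is already stated in the paper as a consequence of Milnor's fibration and duality, so I would take it as given. The rest is the Smith normal form computation above.
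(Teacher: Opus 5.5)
Your proof is correct and is the standard argument the paper relies on. The paper gives no proof of its own (it cites \cite{BG} and \cite{D}), but it sets up the Wang sequence (\ref{eq:3.4}) with $H_n(L_f,\mathbb{Z})\cong\ker(\mathbb{I}-h_*)$ and $H_{n-1}(L_f,\mathbb{Z})\cong\operatorname{coker}(\mathbb{I}-h_*)$ precisely so that (i)--(iii) follow from $\Delta(1)=\det(\mathbb{I}-h_*)$ together with your rational-rank and Smith-normal-form observations. Your aside on $n=1$ is garbled and unnecessary: there $H_1(L_f)\not\cong\ker(\mathbb{I}-h_*)$ unless you also reduce the top class, and (iii) fails with unreduced $H_0$. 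Since the lemma is only meant for $n\geq 2$, this does not affect your argument.
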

Likewise, when $\Delta(1)=0$, we see that $(t-1)$ is a factor of $\Delta(t)$. Milnor and Orlik proved that in this case, the number of factors $(t-1)$ in the polynomial $\Delta(t)$ is equal to the Betti number $b_{n-1}(L_{f})$ of the link $L_{f}$, i.e. this represents the free part of $H_{n-1}(L_{f},\mathbb{Z})$. We begin by computing this value. 

\begin{defi}\label{def:3.2.4}
    Let $f$ be a monic polynomial $f(t)=(t-\alpha_{1})(t-\alpha_{2})\cdots(t-\alpha_{k})$, where each root $\alpha_{j}\in\mathbb{C}^{*}$. We define the \textit{divisor}  of $f$ as $$\operatorname{div}f=\langle\alpha_{1}\rangle + \langle\alpha_{2}\rangle+\cdots+\langle\alpha_{k}\rangle,$$
    which is an element of the integral group ring $\mathbb{Z}[\mathbb{C}^{*}]$.
\end{defi}
In particular, for the polynomial $t^{m}-1$, we have the following notation:
\begin{equation}\label{eq:3.5}
     \Lambda_{m}:=\operatorname{div}(t^{m}-1) = \langle 1\rangle + \langle \xi\rangle+\cdots+\langle \xi^{m-1}\rangle,
\end{equation}
where $\xi=e^{\frac{2\pi i}{m}}$. Moreover, for $a,b\in\mathbb{Z}^{+}$, these verify the rule
\begin{equation}  \label{eq:3.6}  \Lambda_{a}\Lambda_{b}=\gcd(a,b)\Lambda_{\operatorname{lcm}(a,b)}.
\end{equation}
Next, we compute the divisor of the Alexander polynomial $\Delta(t)$ of $f$. This formula was proven by Milnor and Orlik in \cite{MO}.

\begin{lem}\label{lem:3.2.5}
    Let $f\in\mathbb{C}[z_{0},\dots, z_{n}]$ be a weighted homogeneous polynomial of degree $d$ and weight vector $\mathbf{w}=(w_{0},\dots,w_{n})$ with isolated singularity at the origin. The divisor of the Alexander polynomial $\Delta(t)$ of $f$ is determined by the formula
    \begin{equation}\label{eq:3.7}
        \operatorname{div} \Delta = \prod_{i=0}^{n}\left(\dfrac{\Lambda_{u_{i}}}{v_{i}}-\Lambda_{1}\right),
    \end{equation}
    where the integer numbers $u_{i}$'s and $v_{i}$'s are calculated as
    \begin{equation}\label{eq:3.8}
        u_{i}=\dfrac{d}{\gcd(d,w_{i})} \quad \text{and} \quad v_{i}=\dfrac{w_{i}}{\gcd(d,w_{i})}.
    \end{equation}
    Moreover the Milnor number $\mu\left(L_f\right)$ of the Milnor fiber 
    is given by 
    \begin{equation}\label{eq:3.6}
    \mu\left(L_f\right)=\prod_{j=0}^n\left(\frac{d}{w_j}-1\right).
    \end{equation}
    
\end{lem}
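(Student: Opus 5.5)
This is the Milnor--Orlik formula, and I would not reprove it from scratch. Instead I would derive it from the weighted $\mathbb{C}^*$-action on the Milnor fibre, in three steps: compute the Lefschetz numbers of the powers of the monodromy, realize them as Euler characteristics of fixed-point sets, and then pass to the characteristic polynomial through divisors.

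First, since $f$ is weighted homogeneous, the global Milnor fibre is $F=f^{-1}(1)$. The monodromy can be taken to be the diffeomorphism
$$h(z_0,\dots,z_n)=\bigl(e^{2\pi i w_0/d}z_0,\dots,e^{2\pi i w_n/d}z_n\bigr),$$
which has finite order $d$. By Milnor's theorem $F$ has the homotopy type of a bouquet of $\mu$ copies of $S^n$, so $\tilde H_*(F)$ is concentrated in degree $n$. Hence the Lefschetz number of $h^k$ is
$$\Lambda(h^k)=1+(-1)^n\operatorname{tr}(h_*^k).$$
These traces, for $k=0,\dots,d-1$, determine the eigenvalues of $h_*$ with multiplicity, because $h_*$ has finite order and is therefore diagonalizable over $\mathbb{C}$ with roots of unity as eigenvalues.

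Second, because $h$ has finite order, $\Lambda(h^k)=\chi(\operatorname{Fix}(h^k))$. The fixed set of $h^k$ is $F\cap\{z_i=0 \text{ for all } i \text{ with } d\nmid kw_i\}$, which is the Milnor fibre of $f$ restricted to the coordinate subspace $\mathbb{C}^S$, where $S=\{i : d\mid kw_i\}$. So I need $\chi(F_S)$ for every $S$. For this I would use the free $\mathbb{Z}_d$-action generated by $h$ on $F$ together with the fibration $F\setminus\{\text{coordinate hyperplanes}\}\to(\mathbb{C}^*)^{n+1}$-type stratifications. Concretely, I would stratify $F$ by the tori $\{z_i\neq 0 \iff i\in T\}$ and show that $\chi(F\cap(\mathbb{C}^*)^T)$ equals $(-1)^{|T|-1}\prod_{i\in T}(d/w_i)$ times an inclusion-exclusion correction. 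This is exactly the combinatorics encoded by the product $\prod_i(\Lambda_{u_i}/v_i-\Lambda_1)$. Expanding that product and using $\Lambda_a\Lambda_b=\gcd(a,b)\Lambda_{\operatorname{lcm}(a,b)}$ matches the subsets $S$ term by term.

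Third, I would translate the result into divisors. The divisor $\Lambda_m$ is the element of $\mathbb{Z}[\mathbb{C}^*]$ whose "trace against $t^k$" is $m$ if $m\mid k$ and $0$ otherwise. A divisor is determined by these character values, so it suffices to check, for each $k$, that the character value of $\prod_i(\Lambda_{u_i}/v_i-\Lambda_1)$ at $k$ equals $\operatorname{tr}(h_*^k)=(-1)^n(\chi(\operatorname{Fix}h^k)-1)$. At $k$ the $i$-th factor evaluates to $u_i/v_i-1=d/w_i-1$ when $u_i\mid k$, that is when $d\mid kw_i$, and to $-1$ otherwise. The product therefore becomes $\prod_{i\in S}(d/w_i-1)\cdot(-1)^{n+1-|S|}$.

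What remains is the identity
$$\chi(F_S)=1+(-1)^{|S|-1}\prod_{i\in S}\Bigl(\frac{d}{w_i}-1\Bigr).$$
Since $F_S$ is itself a bouquet of $(|S|-1)$-spheres, this identity says the same thing as the Milnor number formula applied to $f|_{\mathbb{C}^S}$, so the divisor formula reduces to the Milnor number formula on every coordinate subspace. The case $k=0$ gives the final claim $\mu=\prod_j(d/w_j-1)$.

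The main obstacle is that $f|_{\mathbb{C}^S}$ need not have an isolated singularity. For such $S$ I would compute $\chi(F_S)$ directly instead, via the torus stratification and Kouchnirenko/Varchenko-type counts, or by an induction on $|S|$. In every case I would first prove $\mu=\prod_j(d/w_j-1)$, comparing Poincaré series through the Jacobian algebra:
$$\prod_j\frac{1-t^{d-w_j}}{1-t^{w_j}}\Big|_{t=1}.$$
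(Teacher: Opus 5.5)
Your Steps 1 and 3 are sound and follow the standard Milnor--Orlik route; the paper itself gives no proof and only cites \cite{MO}. With $S_k=\{i: d\mid kw_i\}$ one has $\operatorname{Fix}(h^k)=F\cap\mathbb{C}^{S_k}=:F_{S_k}$, and comparing character values reduces the divisor formula to
\[
\chi(F_{S_k})=1+(-1)^{|S_k|-1}\prod_{i\in S_k}\Bigl(\frac{d}{w_i}-1\Bigr)\qquad\text{for all }k .
\]
The gap is at exactly this point. You leave open whether $f|_{\mathbb{C}^{S_k}}$ has an isolated singularity. When it does not, you propose to compute $\chi(F_S)$ ``directly'' by torus stratifications or Kouchnirenko-type counts. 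That cannot work, because on a coordinate subspace where the restriction is not isolated the identity is false. Take $f=z_0^2z_1+z_1^3$ (weights $(1,1)$, $d=3$) and $S=\{0\}$: then $f|_{\mathbb{C}^S}\equiv 0$, so $F_S=\emptyset$ and $\chi(F_S)=0$, whereas the right-hand side is $3$. This $S$ is never of the form $S_k$, since $u_0=u_1=3$. For the same reason your torus-stratum formula in Step 2 cannot hold for arbitrary $T$; here the stratum for $T=\{0\}$ is empty. Also, the $\mathbb{Z}_d$-action on $F$ is not free: its isotropy is precisely what produces the sets $F_{S_k}$. That paragraph of Step 2 should be dropped, since Step 3 already does all the bookkeeping.

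What is missing is the observation that the subspaces which actually occur are always good. Write $h^k=\operatorname{diag}(\zeta_0,\dots,\zeta_n)$ with $\zeta_j=e^{2\pi i kw_j/d}$. Since $f\circ h^k=f$, differentiating gives $\zeta_j\,(\partial_jf)(h^kz)=(\partial_jf)(z)$. At a point $p\in\mathbb{C}^{S_k}$ this reads $(\zeta_j-1)\,\partial_jf(p)=0$, so $\partial_jf(p)=0$ for every $j\notin S_k$. Hence every critical point of $f|_{\mathbb{C}^{S_k}}$ is a critical point of $f$, i.e.\ the origin. In particular $f|_{\mathbb{C}^{S_k}}\not\equiv 0$ unless $S_k=\emptyset$, in which case both sides of the identity are $0$. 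Otherwise $f|_{\mathbb{C}^{S_k}}$ is weighted homogeneous of degree $d$ with weights $(w_i)_{i\in S_k}$ and has an isolated singularity. Applying Milnor's bouquet theorem and your Poincar\'e-series formula for $\mu$ to this restriction gives the identity and completes the proof; the $k=0$ case is the Milnor number formula itself. A minor point: $F=f^{-1}(1)$ is noncompact, so apply the Lefschetz formula $\Lambda(h^k)=\chi(\operatorname{Fix}h^k)$ on the compact $h$-invariant local Milnor fibre $f^{-1}(\epsilon)\cap\bar B_\delta$. Balls are invariant because $h$ is unitary.
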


%When we compute Formula (\ref{eq:3.7}), we arrive at the following expression for the divisor of $\Delta(t)$:
%$$\operatorname{div} \Delta = \sum_{j} r_{j}\Lambda_{j},$$
%where the index $j$ runs over the set $\left\{\operatorname{lcm}(u_{i_{1}},\dots,u_{i_{s}}): \{i_{1},\dots,i_{s}\} \text{ is a subset of }I_{n}\right\}$. Here $I_{n}=\{0,1,\dots,n\}$ and we consider $\operatorname{lcm}(\emptyset)=1$. Then, the characteristic polynomial $\Delta(t)$ is given by
%\begin{equation}\label{eq:3.9}
%    \Delta(t)=\prod_{j}(t^{j}-1)^{r_{j}},
%\end{equation}
%where the $r_{j}$'s are given as above. Therefore, we have that the Betti number $b_{n-1}(L_{f})$ satisfies
%\begin{equation}\label{eq:3.10}
%    b_{n-1}(L_{f})=\sum_{j}r_{j}.
%\end{equation}
In order to compute the torsion of the group $H_{n-1}(L_{f},\mathbb{Z})$, Orlik proposed the following conjecture (see \cite{Or}):

\begin{conjecture}[\bf Orlik algorithm]
Let $f$ be a weighted homogeneous polynomial of degree $d$ and weight vector $\mathbf{w}=(w_{0},\dots, w_{n})$. We define the collection of pairs $\{(c_{i_{1},\dots,i_{s}},k_{i_{1},\dots,i_{s}})\}$, where each $\{i_{1},\dots,i_{s}\}$ is a ordered subset of $\{0,1,\dots,n\}$, i.e. $0\leq i_{1}<\dots<i_{s}\leq n$, as follows. The numbers $c_{i_{1},\dots, i_{k}}$ are defined inductively as $c_{\emptyset}=1$ and 
      $$c_{i_{1}, \ldots, i_{s}}=\dfrac{\operatorname{gcd}\left(u_{0}, \ldots, \hat{u}_{i_{1}}, \ldots, \hat{u}_{i_{s}}, \ldots, u_{n}\right)}{\prod_{J} c_{j_{1}, \ldots, j_{t}}}, 
$$
where $J$ is the collection of ordered proper subset of $\{i_{1},i_{2},\dots, i_{s}\}$ and $u_{i}$ is defined as in (\ref{eq:3.8}). On the other hand, the values $k_{i_{1},\dots,i_{s}}$ are obtained as $k_{\emptyset}=\epsilon_{n+1}$ and 
$$ k_{i_{1}, \ldots, i_{s}}=\epsilon_{n-s+1} \sum_{I}(-1)^{s-t} \frac{u_{j_{1}} \cdots u_{j_{t}}}{v_{j_{1}} \cdots v_{j_{t}} \operatorname{lcm}\left(u_{j_{1}}, \ldots, u_{j_{t}}\right)},$$
where $I$ is the collection of all ordered subsets of $\{i_{1},\dots, i_{s}\}$, $u_{i}$ and $v_{i}$ are defined as in (\ref{eq:3.8}) and 
$$
\epsilon_{n-s+1} =
\begin{cases}
0, & \text{if } n - s + 1 \text{ is even},\\
1, & \text{if } n - s + 1 \text{ is odd}.
\end{cases}
$$
We consider $r=\lfloor \max\{k_{i_{1},\dots,i_{s}\}}\}\rfloor$ and for each $j\in\mathbb{Z}$ such that $1\leq j\leq r$ we define the integer number 
$$d_{j}=\prod_{k_{i_{1},\dots,i_{s}}\geq j}c_{i_{1},\dots,i_{s}}.$$
Then the torsion of the homology group $H_{n-1}(L_{f},\mathbb{Z})$ is determined by
\begin{equation}\label{eq:3.11}
    H_{n-1}(L_{f},\mathbb{Z})_{tor}=\mathbb{Z}_{d_{1}}\oplus \cdots\oplus \mathbb{Z}_{d_{r}}.
\end{equation}
\end{conjecture}

For $n\geq3$, this conjecture was  proved by Hertling and Mase \cite{HM} for invertible polynomials.

%% file: Chapter4.tex
% !TEX root = Main_SE_RHS_2026.tex

\section{Building rational homology spheres via invertible polynomials}
In this section, we construct links that are rational homology spheres. These links are provided by polynomials of cycle type or certain types of Thom-Sebastiani sums. These types of polynomials were studied by Boyer et al. in \cite{BGN}. More precisely, they worked with the associated weights $\mathbf{w}=(w_{0},w_{1},w_{2},w_{3},w_{4})$ to these polynomials $f$ where the link $L_{f}$ has dimension $7$ and the corresponding weighted hypersurface $X_{f}$ has index $I=|\mathbf{w}|-d=1$. Here, we show that this also works for links of dimension $4n-1$ and any index $I>0$. 

\subsection{Rational homology spheres from cyclic polynomials} Consider the polynomial of cycle type
\begin{equation}    \label{eq:4.1}f_{0}=z_{n}z_{0}^{a_{0}}+z_{0}z_{1}^{a_{1}}+z_{1}z_{2}^{a_{2}}+\cdots+z_{n-1}z_{n}^{a_{n}}, \quad n\geq2 \text{ even}.
\end{equation}
In order to find the weight vector $\mathbf{v}$ and its degree $m_{\alpha}$, we must solve the following system of linear equations in variables $v_{i}'$ and $d'$:
\begin{equation}\label{eq:4.2}
    v_{n}'+a_{0}v_{0}'=d', \ \ v_{0}'+a_{1}v_{1}'=d', \quad  \cdots \quad , v_{n-1}'+a_{n}v_{n}'=d'.
\end{equation}
Following \cite{K}, we observe that the  solutions for the system (\ref{eq:4.2}) are given by the vectors $\lambda(v_{0}',v_{1}',\dots, v_{n}',d')$, with $\lambda\in\mathbb{R}$ and
\begin{equation}\label{eq:4.3}
    v_{i}'=1-a_{i+1}+a_{i+1}a_{i+2}-\dots +a_{i+1}a_{i+2}\dots a_{i+n-1}a_{i+n},
\end{equation}
where the subscripts are taken$\mod (n+1)$ and $d'=1+a_{0}a_{1}\dots a_{n}$. Here, it is possible that the values $v_{i}'$ have a common factor. Then, we write $v^*=\gcd(v_{0}',v_{1}',\dots,v'_{n})$. Thus, we can take the weight vector and the degree of $f_{0}$ as $\textbf{v}=\frac{1}{v^*}(v_{0}',v_{1}',\dots,v_{n}')$ and $m_{\alpha}=\frac{1}{v^*}(1+a_{0}a_{1}\dots a_{n})$, respectively. In \cite{K}, Kollár proved that when $v^*=1$, the hypersurface $\{f_{0}=0\}\subset\mathbb{P}(\mathbf{v})$ is birational to the projective space $\mathbb{P}^{n-1}$. In addition, he showed that the rank of the middle homology group of the link is given by
$$\text{dim }H_{n-1}(L_{f_{0}},\mathbb{Q})=(-1)^{n+1}+v^{*}.$$
As $n$ is even, we have that when $v^{*}=1$, then the link $L_{f_{0}}$ is a rational homology sphere. Later, we will compute the torsion of $H_{n-1}(L_{f_{0}},\mathbb{Z})$ using the Orlik algorithm.

From now on, we say that the cycle polynomial $f_{0}$ satisfies the \textit{condition} $K1$ when $v^*=1$. Clearly, in this situation, we have that the weight vector is $\mathbf{v}=(v_{0}',v_{1}',\dots, v_{n}')$ and the degree is $m_{\alpha}=1+a_{0}a_{1}\dots a_{n}$.

\begin{lem}\label{lem:4.1.1}
    The following statements are equivalent:
 \begin{itemize}
     \item[(a)] $f_{0}$ satisfies the condition $K1$.
     \item[(b)] $\gcd(m_{\alpha},v_{i_{0}}')=1$ for some $i_{0}$.
     \item[(c)] $\gcd(m_{\alpha},v_{i}')=1$ for all $i=0,1,\dots,n$.
     \item[(d)] For $\{f_{0}=0\}\subset\mathbb{P}(\mathbf{v})$, the weighted projective space $\mathbb{P}(\mathbf{v})$ is well-formed.
 \end{itemize}
 In addition, if $n\geq 4$, then the fact that $f_{0}$ verifies the condition $K1$ implies that the weighted hypersurface $X_{f_{0}}\subset\mathbb{P}(\mathbf{v})$ is well-formed.
 \end{lem}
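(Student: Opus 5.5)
The plan is to use the linear relations (\ref{eq:4.2}) satisfied by the unnormalized weights, read cyclically. With indices taken mod $(n+1)$ (so that $v'_{-1}=v'_n$), these relations say
$$d' = v'_{i-1} + a_i v'_i \qquad \text{for every } i,$$
where $d'=m_\alpha$ when $v^*=1$. All four equivalences, and the final claim, should follow from two consequences of these identities.

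First, a common divisor of all the $v'_j$ divides $d'$. Second, if a prime $p$ divides both $d'$ and $v'_i$, then $p \mid v'_{i-1}$, because $v'_{i-1}\equiv -a_i v'_i \pmod{d'}$. Running this backwards once around the cycle, $p$ divides every $v'_j$. Combining the two facts gives
$$\gcd(d',v'_i)=v^* \quad \text{for every } i=0,\dots,n.$$
This identity yields (a)$\Leftrightarrow$(b)$\Leftrightarrow$(c) at once: (a) is $v^*=1$, (c) trivially implies (b), and (b) forces $v^*=\gcd(d',v'_{i_0})=1$.

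For (d), I would argue both directions directly.
\begin{itemize}
\item Well-formedness implies $v^*=1$, since $v^*$ divides every partial gcd $\gcd(v'_0,\dots,\hat v'_i,\dots,v'_n)$.
\item Conversely, fix $i$ and let $p$ divide all $v'_j$ with $j\neq i$. Since $n\ge 2$, there is an index $k$ with neither $k$ nor $k-1$ equal to $i$. Then $p \mid v'_{k-1}+a_k v'_k=d'$, and also $p \mid v'_k$. Hence $p \mid \gcd(d',v'_k)=v^*=1$.
\end{itemize}
So (a) implies (d).

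For the last statement I would invoke Lemma \ref{lem:2.1}. Its hypothesis that the ambient space is well-formed is exactly (d). It then suffices to show that, for any distinct $i,j$, the gcd of the weights omitting $v'_i$ and $v'_j$ is $1$, which in particular divides $m_\alpha$.

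The key combinatorial step, and the only place where $n\ge 4$ is used, is to find two cyclically consecutive indices $k-1,k$ that both avoid $\{i,j\}$. Deleting two vertices from a cycle of length $n+1\ge 5$ leaves at least $3$ vertices spread over at most two arcs, so some arc has length at least $2$ and contains such a pair. Once this pair is found, the same argument as for (d) finishes the proof: any $p$ dividing the remaining weights divides $d'=v'_{k-1}+a_k v'_k$ and $v'_k$, hence divides $v^*=1$.

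I expect the main obstacle to be simply keeping the cyclic indexing consistent with (\ref{eq:4.2}). That is, one must check that the relation really couples $v'_{i-1}$ with $a_iv'_i$ as written, and not the reverse, before propagating divisibility around the cycle. For $n=2$ the pair-finding step genuinely fails, which explains the restriction to $n\ge 4$.
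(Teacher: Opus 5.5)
Your proposal is correct and follows essentially the paper's argument. You propagate divisibility around the cyclic relations $v'_{i-1}+a_iv'_i=d'$ to get (a)$\Leftrightarrow$(b)$\Leftrightarrow$(c); you use a consecutive pair avoiding the deleted index to get well-formedness of $\mathbb{P}(\mathbf{v})$; and for $n\ge 4$ you use a consecutive pair avoiding both deleted indices to verify the criterion of Lemma~\ref{lem:2.1}. Packaging (a)--(c) via $\gcd(d',v'_i)=v^*$ and concluding that the partial gcd is $1$ (rather than merely dividing $m_\alpha$) are harmless strengthenings. To get that identity exactly, and not just equality of prime divisors, run the propagation with an arbitrary common divisor $r$ instead of a prime $p$; the argument works verbatim.
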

\begin{proof}
    First, we show that (a) implies (b). We can assume without loss of generality that $i_{0}=0$. If we suppose that $r=\gcd(m_{\alpha},v_{0})\neq1$, then by the equation $v_{n}'+a_{0}v_{0}'=m_{\alpha}$ in (\ref{eq:4.2}), we have $r\mid v_{n}'$. Following a similar process, we get $r\mid v_{n-1}',\dots, r\mid v_{1}'$, which is a contradiction. On the other hand, using again the equations in (\ref{eq:4.2}), we also obtain that (b) is equivalent to (c).

    Let us see that (c) implies (d). Indeed, we suppose that there exists some $j$ such that $\gcd(v_{0}',\dots,\hat{v_{j}'},\dots, v_{n}')=r\neq1$. We can assume $j=0$. Then $r\mid v_{1}'$ and $r\mid v_{2}'$. This implies that $r\mid m_{\alpha}$. Thus $\gcd(m_{\alpha},v_{1}')\neq1$, which is a contradiction.

    Finally, it is immediate that (d) implies (a).

    Now, from Lemma \ref{lem:2.1}, we will verify  $\gcd(v_{0}',\dots,\hat{v_{i}'},\dots,\hat{v_{j}'},\dots, v_{n})\mid m_{\alpha}$, for $n\geq4$. Here, we suppose without loss of generality that $i=0$. It implies that there exist two consecutive weights $v_{k}'$ and $v_{k+1}'$ different to $v_{0}'$ and $v_{j}'$. Thus, we have $\gcd(\hat{v_{0}'},v_{1},\dots, \hat{v_{j}'},\dots, v_{n}')\mid \gcd(v_{k}',v_{k+1}')$. Then, by the equation $v_{k}'+a_{k+1}v_{k+1}'=m_{\alpha}$ in (\ref{eq:4.2}), we obtain $$\gcd(v_{k}',v_{k+1}')\mid m_{\alpha}.$$ It follows that $$\gcd(\hat{v_{0}'},v_{1},\dots, \hat{v_{j}'},\dots, v_{n}')\mid m_{\alpha}.$$
\end{proof}

 Next, we use the Orlik algorithm to compute the middle Betti number of the link $L_{f_{0}}$ of a cycle polynomial $f_{0}$ that verifies the condition $K1$.
\begin{prop}\label{prop:4.1.2}
    Let $f_{0}$ be a cyclic polynomial defined as in (\ref{eq:4.1}) of degree $m_{\alpha}$ and weight vector $\mathbf{v}$ which verifies the condition $K1$. Then 
    $$H_{n-1}(L_{f_{0}},\mathbb{Z})=\mathbb{Z}_{m_{\alpha}}.$$
\end{prop}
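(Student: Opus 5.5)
The plan is to combine the Milnor--Orlik formula (Lemma \ref{lem:3.2.5}) with Lemma \ref{lem:3.2.3} to get the order of $H_{n-1}(L_{f_0},\mathbb{Z})$, and then use the Orlik algorithm (valid for invertible polynomials by Hertling--Mase, here with $n\geq 3$) to show that this finite group is cyclic.

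\emph{Step 1: the data $u_i$, $v_i$.} By Lemma \ref{lem:4.1.1}, condition $K1$ gives $\gcd(m_\alpha,v_i')=1$ for every $i$. So in (\ref{eq:3.8}) we get $u_i=m_\alpha$ and the Orlik $v_i$ equals the weight $v_i'$, for all $i=0,\dots,n$.

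\emph{Step 2: the Alexander polynomial.} Write $\Lambda=\Lambda_{m_\alpha}$. By (\ref{eq:3.6}), $\Lambda^k=m_\alpha^{k-1}\Lambda$ for $k\geq 1$. Expanding (\ref{eq:3.7}) then gives
$$\operatorname{div}\Delta=\prod_{i=0}^n\Bigl(\tfrac{1}{v_i'}\Lambda-\Lambda_1\Bigr)=(-1)^{n+1}\Lambda_1+\frac{1}{m_\alpha}\Bigl(\prod_{i=0}^n\Bigl(\frac{m_\alpha}{v_i'}-1\Bigr)-(-1)^{n+1}\Bigr)\Lambda .$$
Since $n$ is even, this reads $\operatorname{div}\Delta=r\Lambda-\Lambda_1$, with $r=(\mu+1)/m_\alpha$ and $\mu$ the Milnor number. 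The number of factors $(t-1)$ in $\Delta$ is $r-1$, and this equals $b_{n-1}(L_{f_0})=(-1)^{n+1}+v^*=0$ by Kollár's formula. Hence $r=1$, i.e. $\mu=m_\alpha-1$, which I will use in the form $\prod_i(m_\alpha/v_i'-1)=m_\alpha-1$. It follows that $\Delta(t)=(t^{m_\alpha}-1)/(t-1)$ and $\Delta(1)=m_\alpha$. By Lemma \ref{lem:3.2.3}, $L_{f_0}$ is a rational homology sphere and $|H_{n-1}(L_{f_0},\mathbb{Z})|=m_\alpha$.

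\emph{Step 3: cyclicity via the Orlik algorithm.} Because all $u_i=m_\alpha$, the gcd of the $u_j$ over any nonempty complement is $m_\alpha$. Running the inductive definition of the $c_{i_1,\dots,i_s}$, exactly one of them carries the factor $m_\alpha$ and all the others equal $1$. I expect this to be the index set whose complement is smallest nonempty, modulo the convention for $c_\emptyset$, which I would fix to match \cite{Or} and \cite{HM}. Consequently every $d_j$ is either $1$ or $m_\alpha$. The invariant factors multiply to $|H_{n-1}|=m_\alpha$ by Step 2, and each lies in $\{1,m_\alpha\}$, so exactly one of them equals $m_\alpha$. Therefore $H_{n-1}(L_{f_0},\mathbb{Z})\cong\mathbb{Z}_{m_\alpha}$.

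The main obstacle is the bookkeeping in Step 3: the conventions for $c_\emptyset$ and for the full index set have to be handled correctly, and one must check that the $k$-value attached to the nontrivial $c$ is at least $1$. That check should follow from the same identity $\prod_i(m_\alpha/v_i'-1)=m_\alpha-1$ used in Step 2. If the bookkeeping becomes awkward, I would instead argue directly. The operator $h_*$ acts on $H_n(F)\cong\mathbb{Z}^{m_\alpha-1}$ with characteristic polynomial $1+t+\dots+t^{m_\alpha-1}$. One would then aim to identify $H_n(F)$ as a cyclic $\mathbb{Z}[t]$-module, so that $\operatorname{coker}(\mathbb{I}-h_*)\cong\mathbb{Z}[t]/(t-1,\Delta(t))\cong\mathbb{Z}_{m_\alpha}$.
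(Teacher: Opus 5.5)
Your proof is correct, but it is organized differently from the paper's. The paper takes $b_{n-1}=0$ from Koll\'ar and reads the whole group off the Orlik data. There $c_\emptyset=\gcd(u_0,\dots,u_n)=m_\alpha$, all other $c_{i_1,\dots,i_s}$ equal $1$, and $k_\emptyset=\epsilon_{n+1}=1$ because $n$ is even. Hence $d_1=m_\alpha$ and $d_j=1$ for $j\geq 2$.

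You instead compute $\Delta(t)=(t^{m_\alpha}-1)/(t-1)$ to get $|H_{n-1}(L_{f_0},\mathbb{Z})|=m_\alpha$. You then use the Orlik data only to know that every $d_j$ lies in $\{1,m_\alpha\}$, and the order count forces exactly one nontrivial invariant factor, so no $k$-value needs to be evaluated. This gives an independent check of the order and of $\Delta(1)\neq 0$. The price is a computation the paper can skip, because $k_\emptyset$ is trivial here.

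In Step 2 you also do not need Koll\'ar's Betti formula to get $r=1$. The relations $v_{i-1}'+a_iv_i'=m_\alpha$ give $\prod_i(m_\alpha-v_i')=\bigl(\prod_i a_i\bigr)\prod_i v_i'=(m_\alpha-1)\prod_i v_i'$. This is $\mu=m_\alpha-1$ directly, as in (\ref{eq:4.6}).

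One correction to your bookkeeping: the unique nontrivial $c$ is $c_\emptyset$ itself. This uses the convention $c_\emptyset=\gcd(u_0,\dots,u_n)$, which is what the paper's proof actually uses. The literal ``$c_\emptyset=1$'' in the displayed algorithm cannot be intended: it would give $c_i=m_\alpha$ for every $i$ and then $c_{i,j}=1/m_\alpha$. The nontrivial $c$ is not a set with smallest nonempty complement; there are $n+1$ of those, which would contradict your own ``exactly one''.

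The slip is harmless, because your counting argument only uses that all relevant $c$'s lie in $\{1,m_\alpha\}$. The full index set never enters any $d_j$, since $k_{0,\dots,n}$ carries the factor $\epsilon_0=0$.
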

\begin{proof}
    From \cite{K}, we have that the link $L_{f_{0}}$ is a rational homology sphere, which implies that the Betti number $b_{n-1}(L_{f_{0}})=0$. Now, we calculate the torsion. Since $f_{0}$ satisfies the condition $K1$, we have that $\gcd\left(m_{\alpha},v_{i}'\right)=1$ for all $i$. Thus, the values of $u_{i}$ and $v_{i}$ are
    $$u_{i}=\dfrac{m_{\alpha}}{\gcd\left(m_{\alpha},v_{i}'\right)}=m_{\alpha} \quad \text{ and } \quad v_{i}=\dfrac{v_{i}'}{\gcd\left(m_{\alpha},v_{i}'\right)}=v_{i}'.$$
    Now, we compute the pairs $(c_{i_{1},\dots,i_{s}},k_{i_{1},\dots,i_{s}})$. By definition, we obtain $$c_{\emptyset}=\gcd(u_{0},\dots,u_{n})=m_{\alpha} $$
    and $c_{i_{1},\dots, i_{s}}=1$ in other cases. Then, it is sufficient to compute $k_{\emptyset}$. As $n$ is even, we have $k_{\emptyset}=\epsilon_{n+1}=1$.  It follows that $H_{n-1}(L_{f_{0}},\mathbb{Z})=\mathbb{Z}_{m_{\alpha}}$.
\end{proof}

\subsection{Rational homology spheres from Thom-Sebastiani sums} Now, we will construct links that are rational homology $(2n-1)$-spheres which come from Thom-Sebastiani sums that are obtained adding a binomial block to a polynomial $f_{0}$ described as in (\ref{eq:4.1}). These polynomials have the form
\begin{align*}
    \text{Type I (Cycle-BP):} & \quad f=f_{0}(z_{0},\dots,z_{n})+z_{n+1}^{a_{n+1}}+z_{n+2}^{a_{n+2}},\\
    \text{Type II (Cycle-Chain):} & \quad f=f_{0}(z_{0},\dots,z_{n})+z_{n+1}^{a_{n+1}}+z_{n+1}z_{n+2}^{a_{n+2}},\\
    \text{Type III (Cycle-Cycle):} & \quad f=f_{0}(z_{0},\dots,z_{n})+z_{n+2}z_{n+1}^{a_{n+1}}+z_{n+1}z_{n+2}^{a_{n+2}}.
\end{align*}
Here, we assume that the polynomial $f_{0}$ satisfies the condition $K1$, where $m_{\alpha}$ and  $\mathbf{v}$ are its degree and weight vector, respectively. Then, for a polynomial $f$ of type I, II or III, we write its corresponding weight vector $\mathbf{w}$ and degree $d$ as
\begin{equation}\label{eq:4.4}
    \mathbf{w}=(w_{0},w_{1},\dots,w_{n},w_{n+1},w_{n+2})=(m_{\beta}\mathbf{v},m_{\alpha}v_{n+1},m_{\alpha}v_{n+2}) \quad  \text{and} \quad d=m_{\alpha}m_{\beta},
\end{equation}
so that these verify the following conditions:
\begin{equation}\label{eq:4.5}
\gcd(m_{\alpha},m_{\beta})=1 \quad \text{and} \quad \gcd(m_{\beta},v_{n+1})=\gcd(m_{\beta},v_{n+2})=1.
\end{equation}
Notice that the conditions in (\ref{eq:4.5}) imply that $\gcd(w_{0},w_{1},\dots, w_{n+2})=1$. On the other hand, the fact that the weight vector $\mathbf{w}$ in (\ref{eq:4.4}) admits a polynomial $f$ of type I, II or III of degree $d=m_{\alpha}m_{\beta}$ implies that $v_{n+1}$ and $v_{n+2}$ take some values in particular. In this sense, we have the following lemma:

\begin{lem} \label{lem:4.1.3}Let $\mathbf{w}$ be the weight vector associated to a polynomial $f$ of type I, II or III and degree $d=m_{\alpha}m_{\beta}$ so that $\mathbf{w}$ and $d$ verify (\ref{eq:4.5}). Then we have
\begin{itemize}
    \item[i)] If $f$ is type I, then $v_{n+1}=v_{n+2}=1$.
    \item[ii)] If $f$ is type II and $\mathbf{w}$ does not admit polynomials of type I, then $v_{n+1}=1$ and $v_{n+2}\neq 1$.
    \item[iii)] If $f$ is type III and $\mathbf{w}$ does not admit polymonials type II, then $v_{n+1}\neq 1$ and $v_{n+2}\neq 1$.
\end{itemize}
\end{lem}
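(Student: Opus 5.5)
The plan is to reduce everything to the degree equations that the extra monomials impose on $v_{n+1}$ and $v_{n+2}$, after cancelling the common factor $m_\alpha$, and then use the coprimality conditions (\ref{eq:4.5}). By (\ref{eq:4.4}) we have $w_{n+1}=m_\alpha v_{n+1}$, $w_{n+2}=m_\alpha v_{n+2}$ and $d=m_\alpha m_\beta$. So a monomial in $z_{n+1},z_{n+2}$ of degree $d$ gives a linear relation among $v_{n+1},v_{n+2},m_\beta$ once $m_\alpha$ is divided out.

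\emph{Type I.} The monomial $z_{n+1}^{a_{n+1}}$ has degree $d$, so $a_{n+1}m_\alpha v_{n+1}=m_\alpha m_\beta$, that is, $a_{n+1}v_{n+1}=m_\beta$. Hence $v_{n+1}\mid m_\beta$. Together with $\gcd(m_\beta,v_{n+1})=1$ from (\ref{eq:4.5}), this forces $v_{n+1}=1$. The same argument applied to $z_{n+2}^{a_{n+2}}$ gives $v_{n+2}=1$. This also records a general principle I will reuse: a pure power of $z_{n+1}$ (or $z_{n+2}$) of degree $d$ exists if and only if the corresponding $v$ equals $1$. The converse direction holds because $v=1$ gives weight $m_\alpha$, which divides $d$, so the power with exponent $m_\beta$ works.

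\emph{Type II.} The term $z_{n+1}^{a_{n+1}}$ gives $v_{n+1}=1$ exactly as above. Suppose, for contradiction, that $v_{n+2}=1$. Then $w_{n+2}=m_\alpha$ divides $d$, and $f_0+z_{n+1}^{m_\beta}+z_{n+2}^{m_\beta}$ is a type I polynomial of degree $d$ for $\mathbf{w}$. This contradicts the hypothesis, so $v_{n+2}\neq 1$.

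\emph{Type III.} The relation $a_{n+1}w_{n+1}+w_{n+2}=d$ becomes $a_{n+1}v_{n+1}+v_{n+2}=m_\beta$, and similarly $v_{n+1}+a_{n+2}v_{n+2}=m_\beta$. Suppose $v_{n+1}=1$. Then $z_{n+1}^{m_\beta}$ has degree $d$, and $f_0+z_{n+1}^{m_\beta}+z_{n+1}z_{n+2}^{a_{n+2}}$ is a type II polynomial for $\mathbf{w}$. Its last monomial is taken from $f$ itself, so it is quasi-smooth of degree $d$. This is a contradiction. If instead $v_{n+2}=1$, swap the roles of the two variables: $f_0+z_{n+2}^{m_\beta}+z_{n+2}z_{n+1}^{a_{n+1}}$ is a type II polynomial up to relabeling $z_{n+1}\leftrightarrow z_{n+2}$.

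The only delicate point is this last step. One must read ``admits a polynomial of type II'' up to permuting the two added variables, and check that the new binomial block is an invertible (quasi-smooth) chain block. That holds since it is the standard chain form $x^{a}+xy^{b}$. Everything else is divisibility bookkeeping.
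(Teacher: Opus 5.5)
Your proof is correct and follows the paper's argument essentially step for step. You cancel $m_\alpha$ in the degree equations, use $\gcd(m_\beta,v_{n+1})=\gcd(m_\beta,v_{n+2})=1$ to force $v=1$ whenever a pure power occurs, and in cases ii) and iii) reach a contradiction by exhibiting a type I (resp.\ type II) polynomial of degree $d$ for $\mathbf{w}$. Your $z_{n+2}^{m_\beta}$ in case ii) is the paper's $z_{n+2}^{1+a_{n+2}}$, since $1+a_{n+2}=m_\beta$ there. Your explicit treatment of $v_{n+2}=1$ in case iii), with the relabeling caveat, just spells out the paper's ``without loss of generality''.
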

\begin{proof}
\begin{itemize}
    \item[i)] Let $f$ be a polynomial of type I:
    $$f=f_{0}(z_{0},\dots,z_{n})+z_{n+1}^{a_{n+1}}+z_{n+2}^{a_{n+2}}$$
    of degree $d=m_{\alpha}m_{\beta}$ and weight vector $\mathbf{w}$ as in (\ref{eq:4.4}). For the weights $w_{n+1}$ and $w_{n+2}$ we have
    $$a_{n+1}w_{n+1}=d \Leftrightarrow a_{n+1}v_{n+1}=m_{\beta} \qquad \text{ and }\qquad a_{n+2}w_{n+2}=d \Leftrightarrow a_{n+2}v_{n+2}=m_{\beta}.$$
    Since $\gcd(m_{\beta},v_{n+1})=\gcd(m_{\beta},v_{n+2})=1$, we obtain $v_{n+1}=v_{n+2}=1$.
    \item[ii)] We consider the polynomial $f$ of type II:
    $$f=f_{0}(z_{0},\dots,z_{n})+z_{n+1}^{a_{n+1}}+z_{n+1}z_{n+2}^{a_{n+2}}$$
    of degree $d=m_{\alpha}m_{\beta}$ and weight vector $\mathbf{w}$ as in (\ref{eq:4.4}). As in item i), we obtain $v_{n+1}=1$. On the other hand, from the monomial $z_{n+1}z_{n+2}^{a_{n+2}}$, we have
    $$w_{n+1}+a_{n+2}w_{n+2}=d \Leftrightarrow 1+a_{n+2}v_{n+2}=m_{\beta}.$$
    If we suppose $v_{n+2}=1$, then we obtain the equality $1+a_{n+2}=m_\beta$. Now, we define the polynomial of type I:
    $$\tilde{f}=f_{0}+z_{n+1}^{a_{n+1}}+z_{n+2}^{1+a_{n+2}}.$$
    After some calculations, we have:
    $$(1+a_{n+2})w_{n+2}=(1+a_{n+2})m_{\alpha}v_{n+2}=m_{\beta}m_{\alpha}=d.$$
    This implies that the polynomial $\tilde{f}$ of type I has degree $d=m_{\alpha}m_{\beta}$ and its weight vector $\mathbf{w}$ is given as in (\ref{eq:4.4}), which is a contradiction. Thus $v_{n+2}\neq 1$.
    \item[iii)] Let $f$ be a polynomial of type III:
    $$f=f_{0}+z_{n+2}z_{n+1}^{a_{n+1}}+z_{n+1}z_{n+2}^{a_{n+2}}$$
    of degree $d=m_{\alpha}m_{\beta}$ and weight vector $\mathbf{w}$ as in (\ref{eq:4.4}). Let us see that $v_{n+1}\neq1$ and $v_{n+2}\neq1$. We prove this by contradiction. Without loss of generality, we assume $v_{n+1}=1$. Now, we define the polynomial
    $$\tilde{f}=f_{0}+z_{n+1}^{m_{\beta}}+z_{n+1}z_{n+2}^{a_{n+2}}.$$
    Clearly, $\tilde{f}$ is a polynomial of type II that has degree $d$ and weight vector $\mathbf{w}$, which is a contradiction. So  $v_{n+1}\neq 1$.
\end{itemize}
\end{proof}

Next, we show that the links that correspond to polynomials of type I, II or III are rational homology $(2n+3)$-spheres. 
\begin{prop}\label{prop:4.1.4}
     Given the polynomial $f=f_{0}+g(z_{n+1},z_{n+2})$ of type I, II or III of degree $d=m_{\alpha}m_{\beta}$ and weight vector $\mathbf{w}=(m_{\beta}\mathbf{v},m_{\alpha}v_{n+1},m_{\alpha}v_{n+2})$ satisfying the conditions in (\ref{eq:4.5}), where $f_{0}$ is a cycle polynomial of degree $m_{\alpha}$ and weight vector $\mathbf{v}$ verifying the condition $K1$. Then the link $L_{f}$ is a rational homology $(2n+3)$-sphere.
\end{prop}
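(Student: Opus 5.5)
The plan is to apply Lemma \ref{lem:3.2.3} i), which reduces the claim to showing $\Delta(1)\neq 0$. Equivalently, I will show that the divisor $\operatorname{div}\Delta$ computed by the Milnor--Orlik formula (\ref{eq:3.7}) contains no copy of $\langle 1\rangle$. The first step is to compute the pairs $(u_i,v_i)$ from (\ref{eq:3.8}) for the weights $\mathbf{w}=(m_\beta\mathbf{v},m_\alpha v_{n+1},m_\alpha v_{n+2})$ and degree $d=m_\alpha m_\beta$.

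For $0\le i\le n$, condition $K1$ gives $\gcd(m_\alpha,v_i')=1$ by Lemma \ref{lem:4.1.1}. Hence $\gcd(d,m_\beta v_i')=m_\beta$, so $u_i=m_\alpha$ and $v_i=v_i'$. For $j=n+1,n+2$, the hypotheses (\ref{eq:4.5}) give $\gcd(m_\beta,v_j)=1$, so $\gcd(d,m_\alpha v_j)=m_\alpha\gcd(m_\beta,v_j)=m_\alpha$. Therefore $u_j=m_\beta$ and the corresponding $v$-value is $v_j$.

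The divisor then factors by the Thom--Sebastiani structure:
\[
\operatorname{div}\Delta=\prod_{i=0}^{n}\Bigl(\tfrac{\Lambda_{m_\alpha}}{v_i'}-\Lambda_1\Bigr)\cdot\prod_{j=n+1}^{n+2}\Bigl(\tfrac{\Lambda_{m_\beta}}{v_j}-\Lambda_1\Bigr).
\]
The first product is $\operatorname{div}\Delta_{f_0}$. The second product is the divisor of the two-variable block $g$, whose weights are $(v_{n+1},v_{n+2})$ and degree is $m_\beta$ (rescaled by $m_\alpha$).

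By Proposition \ref{prop:4.1.2} (or by Kollár's result), $\operatorname{div}\Delta_{f_0}$ has no $\langle 1\rangle$ term. Since $\Lambda_a\Lambda_b=\gcd(a,b)\Lambda_{\mathrm{lcm}(a,b)}$, expanding shows it is an integer combination $\sum_{k\mid m_\alpha}r_k\Lambda_k$, that is, a combination of primitive $k$-th roots with $k\mid m_\alpha$, with the coefficient of $\langle 1\rangle$ equal to zero. Likewise the block divisor is supported on roots of unity of orders dividing $m_\beta$. Multiplication in $\mathbb{Z}[\mathbb{C}^*]$ sends $\langle\alpha\rangle\langle\beta\rangle$ to $\langle\alpha\beta\rangle$. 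Because $\gcd(m_\alpha,m_\beta)=1$, we have $\alpha\beta=1$ only when $\alpha=\beta=1$. So the coefficient of $\langle 1\rangle$ in the product is the product of the two $\langle 1\rangle$-coefficients, and it vanishes because the $f_0$ factor contributes zero. Hence $\Delta(1)\neq0$.

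The main obstacle is making this coprimality argument precise. I will state it as follows: for divisors $D_1$ supported on $\mu_{m_\alpha}$ and $D_2$ supported on $\mu_{m_\beta}$ with coprime orders, the multiplicity of $\langle1\rangle$ in $D_1D_2$ equals the product of the multiplicities in $D_1$ and $D_2$. This follows from the Chinese remainder theorem, which gives $\mu_{m_\alpha}\times\mu_{m_\beta}\cong\mu_{m_\alpha m_\beta}$.

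If one prefers to avoid this, a fallback is to use the explicit values from Lemma \ref{lem:4.1.3} and expand directly. For type I, $v_{n+1}=v_{n+2}=1$, so the block equals $(\Lambda_{m_\beta}-\Lambda_1)^2$. The full product then involves only terms $\Lambda_{m_\alpha m_\beta}$, $\Lambda_{m_\alpha}$, $\Lambda_{m_\beta}$ and $\Lambda_1$. Checking that the total coefficient sum is zero also works, but this is messier for types II and III, where $v_{n+2}\neq1$.
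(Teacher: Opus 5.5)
Your proof is correct, but it reaches $\Delta(1)\neq 0$ by a different route from the paper.

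After computing the same pairs $(u_i,v_i)$, the paper works explicitly. It writes the $f_0$-part as $\tau_\alpha\Lambda_{m_\alpha}-\Lambda_1$ and proves $\tau_\alpha=1$ from the identity $\prod_{i}(m_\alpha-v_i')=(m_\alpha-1)\prod_i v_i'$. It writes the block as $\tau_\beta\Lambda_{m_\beta}+\Lambda_1$ and expands to $\operatorname{div}\Delta=\tau_\beta\Lambda_d-\tau_\beta\Lambda_{m_\beta}+\Lambda_{m_\alpha}-\Lambda_1$. This gives the explicit polynomial $\Delta(t)=(t^{d}-1)^{\tau_\beta}(t^{m_\alpha}-1)/\bigl((t^{m_\beta}-1)^{\tau_\beta}(t-1)\bigr)$. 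It then runs Orlik's algorithm to get $H_{n+1}(L_f,\mathbb{Z})=(\mathbb{Z}_{m_\alpha})^{\tau_\beta+1}$.

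You instead import $\Delta_{f_0}(1)\neq 0$ (Kollár, via Proposition \ref{prop:4.1.2}) and use the Thom--Sebastiani principle that eigenvalues multiply, with $\mu_{m_\alpha}\cap\mu_{m_\beta}=\{1\}$. This is sound for two reasons. First, the first $n+1$ Milnor--Orlik factors are literally $\operatorname{div}\Delta_{f_0}$, since the pairs $(u_i,v_i)$ for $f$ and for $f_0$ coincide. Second, your coefficient lemma holds verbatim in $\mathbb{Q}[\mathbb{C}^*]$, which is where the factors $\Lambda_{u}/v$ live.

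Your route is shorter and avoids the $\tau_\alpha=1$ computation. It also does not use $\gcd(m_\beta,v_{n+1})=\gcd(m_\beta,v_{n+2})=1$ or the shape of $g$, because the block's $u_j=m_\beta/\gcd(m_\beta,v_j)$ always divide $m_\beta$. What it does not give is the explicit $\Delta(t)$ or the value $|\Delta(1)|$. Hence it does not yield the torsion formula (\ref{eq:4.8}), which the paper extracts in the same proof and later uses to compare $H_{n+1}(L_f,\mathbb{Z})$ with $H_{n+1}(L_{f^T},\mathbb{Z})$.

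Two minor points: $\Lambda_k$ is the sum of all $k$-th roots of unity, not only the primitive ones, and your fallback expansion is unnecessary.
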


\begin{proof}
    First, we compute the Betti number $b_{n+1}$. For this, we calculate the Alexander polynomial of link $L_{f}$. For $i=0,1,\dots, n$, we have
    $$u_{i}=\dfrac{d}{\gcd(d,w_{i})}=\dfrac{m_{\alpha}m_{\beta}}{\gcd\left(m_{\alpha}m_{\beta},m_{\beta}v_{i}'\right)}=\dfrac{m_{\alpha}}{\gcd\left(m_{\alpha},v_{i}'\right)}.$$
    As $\gcd(m_{\alpha},v_{i}')=1$, we obtain $u_{i}=m_{\alpha}$, for each $i=0,1,\dots,n$. On the other hand, for $i\in\{n+1,n+2\}$, we have
    $$u_{i}=\dfrac{d}{\gcd(d,w_{i})}=\dfrac{m_{\alpha}m_{\beta}}{\gcd(m_{\alpha}m_{\beta},m_{\alpha}v_{i})}=\dfrac{m_{\beta}}{\gcd(m_{\beta},v_{i})}.$$
    As $\gcd(m_{\beta},v_{n+1})=\gcd(m_{\beta},v_{n+2})=1$, we conclude that $u_{i}=m_{\beta}$ for $i=n+1,n+2$. Now, we compute the values $v_{i}$'s. For $i=0,1,\dots,n$, we have
    $$v_{i}=\dfrac{w_{i}}{\gcd(d,w_{i})}=\dfrac{m_{\beta}v_{i}'}{\gcd\left(m_{\alpha}m_{\beta},m_{\beta}v_{i}'\right)}=v_{i}'.$$
    Moreover, for $i=n+1$ or $n+2$, we have the values
    $$\dfrac{w_{i}}{\gcd(d,w_{i})}=\dfrac{m_{\alpha}v_{i}}{\gcd(m_{\alpha}m_{\beta},m_{\alpha}v_{i})}=v_{i}.$$
    Then, we obtain the divisor of $\Delta(t)$:
    \begin{align*}
        \operatorname{div}\Delta & = \left( \frac{1}{v_{0}'}\Lambda_{m_{\alpha}}-\Lambda_{1}\right)\dots \left( \frac{1}{v_{n}'}\Lambda_{m_{\alpha}}-\Lambda_{1}\right)\left( \frac{1}{v_{n+1}}\Lambda_{m_{\beta}}-\Lambda_{1}\right)\left( \frac{1}{v_{n+2}}\Lambda_{m_{\beta}}-\Lambda_{1}\right)\\
        &= \left(\tau_{\alpha}\Lambda_{m_{\alpha}}-\Lambda_{1}\right)\left(\tau_{\beta}\Lambda_{m_{\beta}}+\Lambda_{1}\right),
    \end{align*}
    where 
    $$\tau_{\alpha}=\frac{m_{\alpha}^{n}-\left(v_{0}'+\cdots+v_{n}'\right)m_{\alpha}^{n-1}+\cdots+\sum_{j=0}^{n}\left(v_{0}'\dots \hat{v}_{j}'\dots v_{n}'\right)}{\prod_{i=0}^{n}v_{i}'}$$
    and 
    $$\tau_{\beta}=\dfrac{m_{\beta}}{v_{n+1}v_{n+2}}-\dfrac{1}{v_{n+1}}-\dfrac{1}{v_{n+2}}.$$
    From (\ref{eq:4.2}) and $m_{\alpha}=1+a_{0}a_{1}\dots a_{n}$, we have 
    \begin{equation}\label{eq:4.6}
        \prod_{i=0}^{n}\left(m_{\alpha}-v_{i}'\right)=\prod_{i=0}^{n}(a_{i}v_{i}')=(m_{\alpha}-1)\prod_{i=0}^{n}v_{i}'.
    \end{equation}
    On the other hand, we can express $\tau_{\alpha}$ as
    \begin{equation}\label{eq:4.7}
    \tau_{\alpha}=\dfrac{\prod_{i=0}^{n}\left(m_{\alpha}-v_{i}'\right)+\prod_{i=0}^{n}v_{i}'}{m_{\alpha}\prod_{i=0}^{n}v_{i}'}.
    \end{equation}
    Putting (\ref{eq:4.6}) in (\ref{eq:4.7}), we obtain $\tau_{\alpha}=1$. Computing and simplifying, we obtain the following expression for the divisor of $\Delta$:
    $$\operatorname{div}\Delta=\tau_{\beta}\Lambda_{d}-\tau_{\beta}\Lambda_{m_{\beta}}+\Lambda_{m_{\alpha}}-\Lambda_{1}.$$
    Thus, the Alexander polynomial of the link is given by
    $$\Delta(t)=\dfrac{(t^{d}-1)^{\tau_{\beta}}(t^{m_{\alpha}}-1)}{(t^{m_{\beta}}-1)^{\tau_{\beta}}(t-1)}.$$
    Notice that $\Delta(1)\neq 0$. Therefore, we have that $L_{f}$ is a rational homology $(2n+3)$-sphere.

    Next, we calculate the torsion of $H_{n+1}(L_{f},\mathbb{Z})$. For this, we will compute the pairs $(c_{i_{1},\dots,i_{s}},k_{i_{1},\dots,i_{s}})$ of the Orlik algorithm. Here we obtain $c_{\emptyset}=1$, $c_{n+1,n+2}=m_{\alpha}$, $c_{0,1,\dots,n}=m_{\beta}$ and $c_{i_{1},\dots,i_{s}}=1$ in other cases. Thus, it is enough to compute $k_{n+1,n+2}$ and $k_{0,1,\dots,n}$. By definition, as $n$ is even, we have $k_{0,1,\dots,n}=0$. On the other hand, we compute
    $$k_{n+1,n+2}=1-\dfrac{1}{v_{n+1}}-\dfrac{1}{v_{n+2}}+\dfrac{u_{n+1}u_{n+2}}{v_{n+1}v_{n+2}\gcd(u_{n+1},u_{n+2})}=\tau_{\beta}+1.$$
    Thus, we obtain
    \begin{equation}\label{eq:4.8}
        H_{n+1}(L_{f},\mathbb{Z})=\left(\mathbb{Z}_{m_{\alpha}}\right)^{\tau_{\beta}+1}.
    \end{equation}
\end{proof}

\begin{lem}\label{rem:4.1.5}
If $f$ is a polynomial of type $I, II$ or $III$ satisfying the conditions given in (\ref{eq:4.5}), then the orbifold $X_f\subset \mathbb{P}(\mathbf{w})$  cut out by $f$ is well-formed. 
\end{lem}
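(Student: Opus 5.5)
The plan is to verify the criterion of Lemma \ref{lem:2.1} directly for the weight vector $\mathbf{w}=(m_\beta v_0',\dots,m_\beta v_n',m_\alpha v_{n+1},m_\alpha v_{n+2})$ and degree $d=m_\alpha m_\beta$. This requires two things: that $\mathbb{P}(\mathbf{w})$ is well-formed, and that every gcd of all weights but two divides $d$. I will use only the arithmetic facts already available.

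\emph{Arithmetic facts.} By Lemma \ref{lem:4.1.1}, condition $K1$ gives $\gcd(m_\alpha,v_i')=1$ for all $i$ and $\gcd(v_0',\dots,v_n')=1$. Then (\ref{eq:4.2}), $v_k'+a_{k+1}v_{k+1}'=m_\alpha$, shows that any two cyclically consecutive weights satisfy $\gcd(v_k',v_{k+1}')\mid m_\alpha$, hence $\gcd(v_k',v_{k+1}')=1$. By (\ref{eq:4.5}), $m_\beta$ is coprime to $m_\alpha$, $v_{n+1}$ and $v_{n+2}$.

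\emph{The key auxiliary fact} is $h:=\gcd(v_{n+1},v_{n+2})=1$ in all three types. The degree equations for the added block, divided by $m_\alpha$, make $h$ divide $m_\beta$:
\begin{itemize}
\item types I and II: $a_{n+1}v_{n+1}=m_\beta$, so already $v_{n+1}=1$ (Lemma \ref{lem:4.1.3});
\item type III: $v_{n+2}+a_{n+1}v_{n+1}=m_\beta$, so $h\mid m_\beta$.
\end{itemize}
Since $\gcd(h,m_\beta)=1$, it follows that $h=1$.

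\emph{Well-formedness of $\mathbb{P}(\mathbf{w})$.}
\begin{itemize}
\item Remove $w_{n+1}$ (or symmetrically $w_{n+2}$). The remaining cycle weights have gcd $m_\beta\gcd(v_i')=m_\beta$, and $\gcd(m_\beta,m_\alpha v_{n+2})=1$.
\item Remove a cycle weight. Since $n\ge 2$, the remaining $n$ cycle weights still contain a consecutive pair $v_k',v_{k+1}'$, whose gcd is $1$. So the gcd divides $\gcd(m_\beta, m_\alpha v_{n+1})=1$.
\end{itemize}

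\emph{Divisibility by $d$ when two weights are removed.}
\begin{itemize}
\item Remove both $w_{n+1},w_{n+2}$: the gcd is $m_\beta$, which divides $d$.
\item Remove one of $w_{n+1},w_{n+2}$ and one cycle weight: a consecutive pair of cycle weights survives, so the gcd divides $\gcd(m_\beta,m_\alpha v_j)=1$.
\item Remove two cycle weights. Let $g$ be the gcd of the remaining $v_i'$; it divides some $v_k'$ and so is coprime to $m_\alpha$. The gcd of the remaining weights divides $\gcd(m_\beta g,\, m_\alpha h)$. Using the coprimality of $m_\alpha$ and $m_\beta$, of $g$ and $m_\alpha$, and of $h$ and $m_\beta$, this reduces to $\gcd(g,h)=1$ since $h=1$, which divides $d$.
\end{itemize}
For $n\ge 4$ one even has $g=1$ from a surviving consecutive pair. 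The case $n=2$ is exactly where $h=1$ is needed.

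The main obstacle is this last case, two cycle weights removed with small $n$, and it is resolved by extracting $\gcd(v_{n+1},v_{n+2})=1$ from the block's degree equations. The rest is bookkeeping with (\ref{eq:4.2}) and (\ref{eq:4.5}).
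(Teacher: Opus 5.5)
Your proof is correct, and its overall structure is the same as the paper's: you verify both conditions of Lemma \ref{lem:2.1} case by case, using (\ref{eq:4.2}), (\ref{eq:4.5}) and Lemma \ref{lem:4.1.1}.

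The real difference is in the case where two cycle weights $w_i,w_j$ are removed. The paper asserts, citing Lemma \ref{lem:4.1.1}, that $g=\gcd(v_0',\dots,\hat{v}_i',\dots,\hat{v}_j',\dots,v_n')$ divides $m_\alpha$, and concludes that the full gcd equals $g$. That divisibility is only established in Lemma \ref{lem:4.1.1} for $n\ge 4$, where in fact $g=1$.

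For $n=2$ the paper's claim is false. There $g$ is a single weight $v_k'=1+a_{k+1}(a_{k+2}-1)>1$ (for $a_i\ge 2$). This weight is coprime to $m_\alpha$, so it does not divide $m_\alpha$. The correct value of the gcd of the remaining weights is $\gcd(v_k',h)$ with $h=\gcd(v_{n+1},v_{n+2})$. This number is coprime to both $m_\alpha$ and $m_\beta$, so it divides $d$ only if it equals $1$.

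Your step of extracting $h=1$ is therefore exactly the missing ingredient. You get it from the block's degree equation ($a_{n+1}v_{n+1}=m_\beta$ for types I and II, $v_{n+2}+a_{n+1}v_{n+1}=m_\beta$ for type III), together with $\gcd(m_\beta,v_{n+1})=1$.

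This matters because the paper's own examples include $n=2$, for instance $\mathbf{w}=(161,28,147,67,67)$. Your argument covers those cases, which the paper's argument as written does not. For $n\ge 4$ the two arguments agree.
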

\begin{proof}
According to Lemma \ref{lem:2.1}, it is enough to show the following two conditions:  
\begin{itemize}
    \item[(1)] $\mathbb{P}(\mathbf{w})$ is well-formed for  the weight vector $\mathbf{w}=(m_{\beta}\mathbf{v},m_{\alpha}v_{n+1},m_{\alpha}v_{n+2})$ established in Proposition \ref{prop:4.1.4}.  For this, we must verify  $\gcd(w_{0},\dots,\hat{w}_{j},\dots,w_{n+2})=1$. First, we assume $j\in\{0,1,\dots,n\}$. Since $\gcd(w_{0},\dots,\hat{w}_{j},\dots,w_{n})=m_{\beta}$, we obtain 
    \begin{align*}
\gcd(w_{0},\dots,\hat{w}_{j},\dots,w_{n+2}) & =\gcd(w_{n+1},w_{n+2},\gcd(w_{0},\dots,\hat{w}_{j},\dots,w_{n}))    \\
    & =\gcd(w_{n+1},w_{n+2},m_{\beta}).
    \end{align*}
    From (\ref{eq:4.5}), we have $\gcd(w_{n+1},w_{n+2},m_{\beta})=1$. On the other hand, we suppose without loss of generality that $j=n+1$. Then
$$\gcd(w_{0},\dots,w_{n},w_{n+2})=\gcd(w_{n+2},\gcd(w_{0},\dots,w_{n}))=\gcd(w_{n+2},m_{\beta}).$$
    Again, by conditions in (\ref{eq:4.5}), we have $\gcd(w_{n+2},m_{\beta})=1$.
    \item[(2)] We need to verify $$\gcd(w_{0},\dots,\hat{w}_{i},\dots,\hat{w}_{j},\dots,w_{n+2})\mid d.$$ 
First, if $i,j\in\{0,1,\dots,n\}$. Here we have
\begin{equation}\label{eq:4.9}
\gcd(w_{0},\dots,\hat{w}_{i},\dots,\hat{w}_{j},\dots,w_{n+2})=\gcd(w_{n+1},w_{n+2},\gcd(w_{0},\dots,\hat{w}_{i},\dots,\hat{w}_{j},\dots,w_{n})).
\end{equation}
Also, as 
$\gcd(w_{0},\dots,\hat{w}_{i},\dots,\hat{w}_{j},\dots,w_{n})=m_{\beta}\gcd(v_{0}',\dots,\hat{v_{i}'},\dots,\hat{v_{j}'},\dots,v_{n}')$ and 
\newline  
$\gcd(v_{0}',\dots,\hat{v_{i}'},\dots,\hat{v_{j}'},\dots,v_{n}')\mid m_{\alpha}$ by Lemma \ref{lem:4.1.1}, we obtain
$$\gcd(w_{n+1},w_{n+2},\gcd(w_{0},\dots,\hat{w}_{i},\dots,\hat{w}_{j},\dots,w_{n}))=\gcd(v_{0}',\dots,\hat{v_{i}'},\dots,\hat{v_{j}'},\dots,v_{n}'),$$
which divides $m_{\alpha}$ so it divides $d$. On the other hand, if we take $i\in\{0,\dots,n\}$ and $j=n+1$, then we have
\begin{align*}
    \gcd(w_{0},\dots,\hat{w}_{i},\dots,w_{n},w_{n+2}) & = \gcd(w_{n+2},\gcd(w_{0},\dots,\hat{w}_{i},\dots,w_{n})) \\
    & = \gcd(w_{n+2},m_{\beta}\gcd(v_{0}',\dots,\hat{v_{i}'},\dots,v_{n}')).
\end{align*}
By Lemma 4.1.1, we have $\gcd(v_{0}',\dots,\hat{v_{i}'},\dots,v_{n}')=1$. This implies that 
$$ \gcd(w_{0},\dots,\hat{w}_{i},\dots,w_{n},w_{n+2})=\gcd(w_{n+2},m_{\beta})=\gcd(m_{\alpha}v_{n+2},m_{\beta})=1.$$
Finally, if $i=n+1$ and $j=n+2$, then $\gcd(w_{0},\dots,w_{n})=m_{\beta}$, which divides $d$. 
\end{itemize}
Thus, we conclude $X_{f}$ is well-formed.
\end{proof}

\begin{rem}  Using formula \ref{eq:3.6}, we can compute the Milnor number of $L_{f}$ for $f$ as in Proposition \ref{prop:4.1.4}:
    \begin{align*}
        \mu(L_{f})=\prod_{j=0}^{n+2}\left(\dfrac{d}{w_{j}}-1\right) & = a_{0}a_{1}\dots a_{n}\left(\dfrac{m_{\beta}}{v_{n+1}}-1\right)\left(\dfrac{m_{\beta}}{v_{n+2}}-1\right) \\
        & = a_{0}a_{1}\dots a_{n}(m_{\beta}(\tau_{\beta}-1)+1).
    \end{align*}
\end{rem}
    
\section{Berglund-Hübsch rule and the topology of links}
In this section, we explain how the Berglund-Hübsch rule works. Moreover, assuming further conditions, we prove that this method keeps invariant the topological property of being rational homology sphere of the links. These links come from the polynomials described in the preceding section. The results of this section are a generalization of the main results in \cite{CGL}. 

Given an invertible polynomial 
$$f(x_{1},x_{2},\dots,x_{n})=\sum_{j=1}^{n}\prod_{i=1}^{n}x_{i}^{a_{ij}}$$
we recall that the exponents of each monomial arranged in rows define a \textit{matrix of exponents} $A=[a_{ij}]_{n\times n}$ which is invertible. The \textit{Berglund-Hübsch transpose rule}, which we abbreviate as \textit{B-H rule}, consists in taking the transpose matrix $A^{T}=[a_{ji}]_{n\times n}$ of $A$ and construct a new polynomial $f^{T}$ which we call \textit{dual polynomial}. Originally, this rule was developed by Berglund, Hübsch and Krawitz to exhibit examples of mirror pairs of Calabi-Yau orbifolds  (see \cite{BH} and \cite{Kr}). Here, the dual polyomial is given by
$$f^{T}(x_{1},x_{2},\dots,x_{n})=\sum_{j=1}^{n}\prod_{i=1}^{n}x_{i}^{a_{ji}}$$
Easily we can note that $f^{T}$ is also invertible.

In the lines below, we describe the links that come from a dual polynomial $f^{T}$, when $f=f_{0}$ or $f$ is of type I, II or III. Before, we show the following technical result:

\begin{lem}\label{lem:4.2.1}
    Consider the positive integers $a_{0},a_{1},\dots a_{n}$ and $m_{\alpha}=1+a_{0}a_{1}\dots a_{n}$. A solution for the system of linear equations 
    \begin{equation}\label{eq:4.10}
        a_{0}\tilde{v}_{0}+\tilde{v}_{1}=m_{\alpha}, \quad a_{1}\tilde{v}_{1}+\tilde{v}_{2}=m_{\alpha},\quad \dots \quad a_{n}\tilde{v}_{n}+\tilde{v}_{0}=m_{\alpha}
    \end{equation}
    is given by the values $\tilde{v}_{i}$ defined by
    \begin{equation}\label{eq:4.11}
        \tilde{v}_{i}=1-a_{i-1}+a_{i-1}a_{i-2}-\dots+a_{i-1}a_{i-2}\cdots a_{i-n}, 
    \end{equation}
    where the subscripts are taken$\mod (n+1)$. In addition, if we consider the values $v_{i}'$ given in (\ref{eq:4.3}), that is
    \begin{equation*}
    v_{i}'=1-a_{i+1}+a_{i+1}a_{i+2}-\dots +a_{i+1}a_{i+2}\dots a_{i+n-1}a_{i+n},
\end{equation*}
    then it follows
    $$\sum_{i=0}^{n}\tilde{v}_{i}=\sum_{i=0}^{n}v_{i}'.$$
 \end{lem}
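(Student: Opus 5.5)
We verify the two claims of Lemma \ref{lem:4.2.1} in turn: first that the $\tilde{v}_{i}$ solve (\ref{eq:4.10}), and then that the two weight sums agree.

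\textbf{Step 1: the $\tilde{v}_{i}$ solve (\ref{eq:4.10}).} This is a telescoping computation with indices mod $(n+1)$. Write
$$\tilde{v}_{i}=\sum_{k=0}^{n}(-1)^{k}\prod_{j=1}^{k}a_{i-j}.$$
Multiplying $\tilde{v}_{i}$ by $a_{i}$ gives
$$a_{i}\tilde{v}_{i}=\sum_{k=0}^{n}(-1)^{k}a_{i}a_{i-1}\cdots a_{i-k}.$$
On the other hand,
$$\tilde{v}_{i+1}=\sum_{k=0}^{n}(-1)^{k}a_{i}a_{i-1}\cdots a_{i-k+1},$$
whose terms for $k\geq 1$ are exactly minus the terms of $a_{i}\tilde{v}_{i}$ for $k-1=0,\dots,n-1$. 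Adding the two expressions, everything cancels except the constant term $1$ of $\tilde{v}_{i+1}$ and the top term of $a_{i}\tilde{v}_{i}$, namely $(-1)^{n}a_{i}a_{i-1}\cdots a_{i-n}$. Since $n$ is even, this top term equals $a_{0}a_{1}\cdots a_{n}$. Hence $a_{i}\tilde{v}_{i}+\tilde{v}_{i+1}=1+a_{0}\cdots a_{n}=m_{\alpha}$, which is (\ref{eq:4.10}). The same parity observation is what underlies Kollár's formula (\ref{eq:4.3}) for (\ref{eq:4.2}).

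\textbf{Step 2: equality of the sums.} I would compare the sums combinatorially. Both $\sum_{i}\tilde{v}_{i}$ and $\sum_{i}v_{i}'$ are sums over all pairs (starting index, length $k$) of signed products of $k$ cyclically consecutive $a$'s. For $v_{i}'$ the block is $a_{i+1}\cdots a_{i+k}$, read forward from $i+1$. For $\tilde{v}_{i}$ the block is $a_{i-1}\cdots a_{i-k}$, read backward from $i-1$. In both cases the sign is $(-1)^{k}$.

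Fix $k$ with $0\leq k\leq n$. As $i$ runs over $\mathbb{Z}/(n+1)$, each family runs over all $n+1$ cyclic blocks of $k$ consecutive indices exactly once. The forward block starting at $i+1$ coincides, as a set, with the backward block starting at $i+k$. Since multiplication is commutative, the map $i\mapsto i+k+1$ bijects the terms of one family onto the other. Summing over $k$ then gives $\sum_{i}\tilde{v}_{i}=\sum_{i}v_{i}'$.

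An alternative route is to sum the equations (\ref{eq:4.2}) and (\ref{eq:4.10}) directly. This yields $\sum_{i}(1+a_{i})v_{i}'=(n+1)m_{\alpha}=\sum_{i}(1+a_{i})\tilde{v}_{i}$, which is not immediately the desired identity, so I would stick with the bijection argument.

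\textbf{Main obstacle.} The only delicate point is bookkeeping. One must check that the block lengths $k=0,\dots,n$ never wrap around to repeat an index; this holds because $k\leq n<n+1$. One must also check that the top term has sign $+$, which uses that $n$ is even, so that the constant of each equation equals $m_{\alpha}$.
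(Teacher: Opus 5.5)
Your proposal is correct and follows essentially the same route as the paper. The paper also proves the system by multiplying $\tilde{v}_i$ by $a_i$ and telescoping against $\tilde{v}_{i+1}$, implicitly using that $n$ is even, as you note. It proves the equality of sums by rewriting each backward product $a_{i-1}\cdots a_{i-j}$ in $\tilde{v}_i$ as the forward product $a_{i-j}\cdots a_{i-1}$, which is the length-$j$ term of $v'_{i-j-1}$; this is exactly your reindexing bijection $i\mapsto i+k+1$, read in the opposite direction.
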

\begin{proof}
    Indeed, for each $i$, we compute
    $$a_{i}\tilde{v}_{i}=a_{i}-a_{i}a_{i-1}+a_{i}a_{i-1}a_{i-2}-\cdots-a_{i}a_{i-1}a_{i-2}\cdots a_{i-n+1}+a_{i}a_{i-1}a_{i-2}\cdots a_{i-n}.$$
    Adding $\tilde{v}_{i+1}=1-a_{i}+a_{i}a_{i-1}-\cdots+a_{i}a_{i-1}\cdots a_{i-n+1}$ in the equality above, we obtain
    $$a_{i}\tilde{v}_{i}+\tilde{v}_{i+1}=1+a_{i}a_{i-1}a_{i-2}\cdots a_{i-n}=1+a_{0}a_{1}\cdots a_{n}=m_{\alpha}.$$
    Finally, $\sum_{i=0}^{n}v_{i}=\sum_{i=0}^{n}\tilde{v}_{i}$ is valid since we can write for each $i$ and $1\leq j\leq n$:
    $$a_{i-1}a_{i-2}\cdots a_{i-j+1}a_{i-j}=a_{i-j}a_{i-j+1}\cdots a_{i-j+(j-1)}.$$
\end{proof}
\medskip

\subsection{Berglund-Hübsch for cyclic polynomials} We begin studying the effect of the B-H rule on the cycle polynomial $f_{0}$. 
\begin{theo}\label{prop:4.2.2}
    Let $f_{0}$ be a cycle polynomial of degree $m_{\alpha}$ and weight vector $\mathbf{v}$ satisfying the condition $K1$. Then the dual polynomial $f_{0}^{T}$ of $f_{0}$ obtained by the B-H rule is also a cycle polynomial. Moreover, if $f_{0}^{T}$ satisfies the condition $K1$ then the link  $L_{f_{0}^{T}}$ is a rational homology $(2n-1)$-sphere and
    \begin{equation}\label{eq:4.12}
        H_{n-1}(L_{f_{0}^{T}},\mathbb{Z})=\mathbb{Z}_{m_{\alpha}}.
    \end{equation}
\end{theo}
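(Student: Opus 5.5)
The plan has three steps: compute the dual polynomial explicitly, identify its weights and degree using Lemma \ref{lem:4.2.1}, and then apply Proposition \ref{prop:4.1.2} to the dual.

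\textbf{Step 1: the dual is a cycle polynomial.} Write down the exponent matrix $A$ of $f_{0}$. Row $j$ records the exponents of the monomial $z_{j-1}z_{j}^{a_{j}}$, with indices taken mod $n+1$. So $A$ has the entries $a_{j}$ on the diagonal, a $1$ in position $(j,j-1)$, and zeros elsewhere. The transpose $A^{T}$ has the $a_{j}$ on the diagonal and a $1$ in position $(j,j+1)$. Reading its rows as monomials gives
$$f_{0}^{T}=z_{0}^{a_{0}}z_{1}+z_{1}^{a_{1}}z_{2}+\cdots+z_{n-1}^{a_{n-1}}z_{n}+z_{n}^{a_{n}}z_{0}.$$
This is again a cycle (loop) polynomial. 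After a relabelling of variables that reverses the cyclic order, for example $z_{i}\mapsto z_{n-i}$, it takes exactly the form (\ref{eq:4.1}) with the exponents read in reverse order. Its quasi-homogeneity system is precisely (\ref{eq:4.10}).

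\textbf{Step 2: weights and degree of the dual.} By Lemma \ref{lem:4.2.1}, the system (\ref{eq:4.10}) has the solution $\tilde v_{i}$ given by (\ref{eq:4.11}), with degree $1+a_{0}\cdots a_{n}=m_{\alpha}$. The degree is unchanged because the product of all exponents is symmetric in the $a_{i}$; this also agrees with Kollár's description, since $d'=\det A=\det A^{T}$. The solution space is one-dimensional, so every weight vector of $f_{0}^{T}$ is a multiple of $(\tilde v_{0},\dots,\tilde v_{n})$.

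The hypothesis that $f_{0}^{T}$ satisfies $K1$ means $\gcd(\tilde v_{0},\dots,\tilde v_{n})=1$. Hence the weight vector is exactly $\tilde{\mathbf v}=(\tilde v_{0},\dots,\tilde v_{n})$ and the degree is exactly $m_{\alpha}$, with no common factor to divide out.

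\textbf{Step 3: homology of the dual link.} Now $f_{0}^{T}$ is a cycle polynomial of the form (\ref{eq:4.1}), with $n$ even, satisfying $K1$, and of degree $m_{\alpha}$. Proposition \ref{prop:4.1.2} applies verbatim and gives $H_{n-1}(L_{f_{0}^{T}},\mathbb Z)=\mathbb Z_{m_{\alpha}}$. In particular $b_{n-1}=0$, which is also Kollár's formula $(-1)^{n+1}+v^{*}=0$, so $L_{f_{0}^{T}}$ is a rational homology $(2n-1)$-sphere.

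To be safe, the Orlik computation can be repeated directly. By Lemma \ref{lem:4.1.1}, $K1$ for the dual gives $\gcd(m_{\alpha},\tilde v_{i})=1$ for all $i$. Therefore $u_{i}=m_{\alpha}$ and $v_{i}=\tilde v_{i}$. The only nontrivial pair is then $c_{\emptyset}=m_{\alpha}$, with $k_{\emptyset}=\epsilon_{n+1}=1$ since $n$ is even.

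\textbf{Main obstacle.} The only delicate point is the bookkeeping in Step 2. Proposition \ref{prop:4.1.2} and Lemma \ref{lem:4.1.1} are stated for the indexing in (\ref{eq:4.1}), so I must confirm that the reversal relabelling carries the $\tilde v_{i}$ to the form (\ref{eq:4.3}) with the exponents permuted, and that condition $K1$ is invariant under this relabelling. Both hold because the system and its solution transform covariantly under the permutation. The condition $K1$ for the dual genuinely has to be assumed, since in general $\gcd(\tilde v_{i})$ need not equal $\gcd(v_{i}')$. The identity $\sum\tilde v_{i}=\sum v_{i}'$ from Lemma \ref{lem:4.2.1} is not needed here beyond consistency; it will matter later for the Fano index.
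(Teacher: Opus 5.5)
Your proposal is correct and follows the paper's proof essentially step for step. You transpose the exponent matrix to see that $f_{0}^{T}$ is again a cycle polynomial, use Lemma~\ref{lem:4.2.1} together with the $K1$ hypothesis on $f_{0}^{T}$ to identify its weights $\tilde{\mathbf{v}}$ and degree $1+a_{0}\cdots a_{n}=m_{\alpha}$, and then apply Proposition~\ref{prop:4.1.2}. Your explicit relabelling $z_{i}\mapsto z_{n-i}$, which puts $f_{0}^{T}$ into the indexing of (\ref{eq:4.1}) with exponents $a_{n-k}$, is a detail the paper leaves implicit. Your remark that $K1$ for the dual is a genuine extra hypothesis is also right: for $n=2$ and $(a_{0},a_{1},a_{2})=(4,3,2)$ one gets $\gcd(v_{i}')=\gcd(4,7,9)=1$ but $\gcd(\tilde v_{i})=\gcd(5,5,10)=5$.
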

\begin{proof}
We consider the matrix of exponents $A_{0}$ of $f_{0}$:
$$A_{0}=\begin{bmatrix}
    a_{0} & 0 & 0 & \cdots & 0 & 0 & 1 \\
    1 & a_{1} & 0 & \cdots & 0 & 0 & 0 \\
    0 & 1 & a_{2} & \cdots& 0 & 0 & 0\\
    \vdots & \vdots & \vdots & \ddots & \vdots & \vdots & \vdots \\
    0  & 0 & 0 & \cdots & 1 & a_{n-1} & 0\\
    0 & 0 & 0 & \cdots & 0 & 1 & a_{n}
\end{bmatrix}.$$
Taking the transpose matrix $A_{0}^{T}$, we verify that the dual polynomial is cycle type:
\begin{equation}\label{eq:4.13}
    f_{0}^{T}= z_{0}^{a_{0}}z_{1}+z_{1}^{a_{1}}z_{2}+\cdots +z_{n-1}^{a_{n-1}}z_{n}+z_{n}^{a_{n}}z_{0}.
\end{equation}
Now, we suppose that $f_{0}^{T}$ verifies the condition $K1$. To determine the degree $\tilde{m}_{\alpha}$ and the weight vector $\tilde{\mathbf{v}}$ of $f_{0}^{T}$, we solve the following matrix equation
$$A_{0}^{T}\tilde{\mathbf{v}}^{T}=D^{T},$$
where $D=\begin{bmatrix}
    \tilde{m}_{\alpha} & \tilde{m}_{\alpha} & \cdots & \tilde{m}_{\alpha}
\end{bmatrix}$. From Lemma \ref{lem:4.2.1}, we obtain the solutions $\lambda(\tilde{v}_{0},\tilde{v}_{1},\dots, \tilde{v}_{n},\tilde{m}_{\alpha})$, where $\lambda\in\mathbb{R}$, $\tilde{m}_{\alpha}=1+a_{0}a_{1}\dots a_{n}$ and each $\tilde{v}_{i}$ is defined as in (\ref{eq:4.11}):
$$\tilde{v}_{i}=1-a_{i-1}+a_{i-1}a_{i-2}-\dots+a_{i-1}a_{i-2}\cdots a_{i-n}.$$
As $f_{0}^{T}$ satisfies condition $K1$, then the weight vector and the degree of $f_{0}^{T}$ are given by $\tilde{\mathbf{v}}=(\tilde{v}_{0},\dots, \tilde{v}_{n})$ and $\tilde{m}_{\alpha}=1+a_{0}a_{1}\dots a_{n}$. Finally, applying Proposition \ref{prop:4.1.2} to the polynomial $f_{0}^{T}$, we obtain that the link $L_{f_{0}^{T}}$ is a rational homology $(2n-1)$-sphere and  
\begin{equation}\label{4.14}
    H_{n-1}(L_{f_{0}^{T}},\mathbb{Z})=\mathbb{Z}_{m_{\alpha}}.
\end{equation}
\end{proof}

\begin{rem}\label{rem:4.2.3}
    Since $f_{0}^{T}$ verifies the condition $K1$, we have that the weighted projective space $\mathbb{P}(\mathbf{\tilde{v}})$ is well-formed. Moreover, for $n\geq 4$, the weighted hypersurface $X_{f_{0}^{T}}\subset\mathbb{P}(\mathbf{\tilde{v}})$ is well-formed.
\end{rem}

\subsection{Berglund-Hübsch for Thom-Sebastiani sums}

Now, we study what happens with the link associated to the dual polynomial $f^{T}$ when $f$ is of type I, II or III. First we deal with polynomials of type I and III.

\begin{theo}\label{prop:4.2.4}
    Let $$f=f_{0}+g(z_{n+1},z_{n+2})$$ be a polynomial of type I or III of degree $d=m_{\alpha}m_{\beta}$ and weight vector $\mathbf{w}=(m_{\beta}\mathbf{v},m_{\alpha}v_{n+1},m_{\alpha}v_{n+2})$ verifying \ref{eq:4.5}), where  $f_{0}$ satisfies the condition $K1$ and has degree $m_{\alpha}$ and weight vector $\mathbf{v}$. If we denote by $f^T$ as the dual polynomial of $f$, then 
    $$f^{T}=f_{0}^{T}+g(z_{n+1},z_{n+2}).$$ Moreover, if $f_{0}^{T}$ satisfies the condition $K1$, then the link  $L_{f^{T}}$ is a rational homology $(2n+3)$-sphere and
    $$H_{n+1}(L_{f},\mathbb{Z})=H_{n+1}(L_{f^{T}},\mathbb{Z}).$$
\end{theo}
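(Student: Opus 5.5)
The plan is to use that the exponent matrix of a Thom-Sebastiani sum is block diagonal. Write $A=\operatorname{diag}(A_0,B)$, where $A_0$ is the exponent matrix of $f_0$ and $B$ is the $2\times 2$ exponent matrix of the block $g$. Then $A^T=\operatorname{diag}(A_0^T,B^T)$.

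For type I, $B=\operatorname{diag}(a_{n+1},a_{n+2})$ is symmetric. For type III,
$$B=\begin{bmatrix} a_{n+1} & 1\\ 1 & a_{n+2}\end{bmatrix},$$
which is also symmetric. Hence $g^T=g$, and by Theorem \ref{prop:4.2.2} we get $f^T=f_0^T+g(z_{n+1},z_{n+2})$. This is exactly why type II is excluded: its block is not symmetric.

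Next I determine the weights and degree of $f^T$. Since $f_0^T$ satisfies $K1$, Theorem \ref{prop:4.2.2} and Lemma \ref{lem:4.2.1} give it degree $\tilde m_\alpha=m_\alpha$ and weight vector $\tilde{\mathbf v}=(\tilde v_0,\dots,\tilde v_n)$. The block $g$ is unchanged, so its equations still read $a_{n+1}v_{n+1}=m_\beta$, $a_{n+2}v_{n+2}=m_\beta$ (type I), or $a_{n+1}v_{n+1}+v_{n+2}=m_\beta$, $v_{n+1}+a_{n+2}v_{n+2}=m_\beta$ (type III). I then take
$$\mathbf w^T=(m_\beta\tilde{\mathbf v},\,m_\alpha v_{n+1},\,m_\alpha v_{n+2}),\qquad d^T=m_\alpha m_\beta=d,$$
and check directly that every monomial of $f^T$ has degree $d$. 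The conditions (\ref{eq:4.5}) involve only $m_\alpha$, $m_\beta$, $v_{n+1}$, $v_{n+2}$, none of which change, so they still hold for $(\mathbf w^T,d)$. A short check is needed that $\gcd(\mathbf w^T)=1$, so that this really is the normalized weight vector; it follows from (\ref{eq:4.5}) together with $\gcd(\tilde v_0,\dots,\tilde v_n)=1$, as in the remark after (\ref{eq:4.5}).

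Now Proposition \ref{prop:4.1.4} applies verbatim to $f^T$, with $f_0^T$ in the role of the $K1$ cycle polynomial, so $L_{f^T}$ is a rational homology $(2n+3)$-sphere. From its proof, (\ref{eq:4.8}) gives
$$H_{n+1}(L_{f^T},\mathbb Z)=(\mathbb Z_{m_\alpha})^{\tau_\beta+1}.$$
The quantity $\tau_\beta=\frac{m_\beta}{v_{n+1}v_{n+2}}-\frac1{v_{n+1}}-\frac1{v_{n+2}}$ depends only on $m_\beta$, $v_{n+1}$, $v_{n+2}$. These are the same for $f$ and $f^T$, as is $m_\alpha=\tilde m_\alpha$. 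Comparing with (\ref{eq:4.8}) for $f$ therefore yields $H_{n+1}(L_f,\mathbb Z)=H_{n+1}(L_{f^T},\mathbb Z)$.

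The main obstacle is the bookkeeping in the middle step. One must confirm that the degree of $f_0^T$ is exactly $m_\alpha$, not a proper divisor of it, which is where $K1$ for $f_0^T$ enters. One must also confirm that the Orlik-algorithm computation inside Proposition \ref{prop:4.1.4} used only $\gcd(m_\alpha,v_i')=1$ and (\ref{eq:4.5}). Both facts hold for $\tilde{\mathbf v}$ by Lemma \ref{lem:4.1.1} applied to $f_0^T$.
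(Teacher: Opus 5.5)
Your proposal is correct and follows essentially the paper's proof. You transpose the block-diagonal exponent matrix, identify the weights of $f^T$ as $(m_\beta\tilde{\mathbf v}, m_\alpha v_{n+1}, m_\alpha v_{n+2})$ with degree $m_\alpha m_\beta$ (using Lemma \ref{lem:4.2.1} and condition $K1$ for $f_0^T$), and apply Proposition \ref{prop:4.1.4} to obtain $(\mathbb{Z}_{m_\alpha})^{\tau_\beta+1}$ with the same $\tau_\beta$. Your observation that the $2\times 2$ blocks are symmetric, and your explicit $\gcd$ check, handle types I and III uniformly; the paper instead treats them case by case, but the argument is the same.
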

\begin{proof}
    First, we see the case when $f=f_{0}+z_{n+1}^{a_{n+1}}+z_{n+2}^{a_{n+2}}$. Here, by Lemma \ref{lem:4.1.3} we have $v_{n+1}=v_{n+2}=1$. It means $\mathbf{w}=(m_{\beta}\mathbf{v},m_{\alpha},m_{\alpha})$. Then we have the matrix of exponents of $f$
    $$A_{I}=\begin{bmatrix}
    a_{0} & 0 & \cdots  & 0 & 1 & 0 & 0 \\
    1 & a_{1} &  \cdots  & 0 & 0 & 0 & 0 \\
    \vdots & \vdots & \ddots & \vdots & \vdots & \vdots & \vdots \\
    0  & 0 & \cdots & a_{n-1} & 0 & 0 & 0\\
    0 & 0 &  \cdots & 1 & a_{n} & 0 & 0 \\
    0 & 0 & \cdots & 0 & 0  & a_{n+1} & 0 \\
    0 & 0 & \cdots & 0 & 0  & 0 & a_{n+2} 
\end{bmatrix}.$$
Applying the B-H rule, we obtain the dual polynomial 
$$f^{T}=f_{0}^{T}+z_{n+1}^{a_{n+1}}+z_{n+2}^{a_{n+2}},$$
where $f_{0}^{T}$ is given as in (\ref{eq:4.13}). Now, we suppose that $f_{0}^{T}$ verifies the condition $K1$. To determine the weight vector $\mathbf{\tilde{w}}$ and the degree $\tilde{d}$ of $f^{T}$, we solve the matrix equation
$$A_{I}^{T}\mathbf{\tilde{w}}=D^{T},$$
where $D=\begin{bmatrix}\tilde{d} & \tilde{d} & \cdots & \tilde{d}\end{bmatrix} $. Using Lemma \ref{lem:4.2.1}, we have the solutions 
$$\lambda(m_{\beta}\tilde{v}_{0},m_{\beta}\tilde{v}_{1},\dots, m_{\beta}\tilde{v}_{n},m_{\alpha},m_{\alpha},\tilde{d}),$$ where $\lambda\in\mathbb{R}$, $\tilde{d}=m_{\alpha}m_{\beta}=(1+a_{0}a_{1}\dots a_{n})m_{\beta}$ and each $\tilde{v}_{i}$ is defined as in (\ref{eq:4.11}):
$$\tilde{v}_{i}=1-a_{i-1}+a_{i-1}a_{i-2}-\cdots + a_{i-1}a_{i-2}\dots a_{i-n}.$$
Since $f_{0}^{T}$ verifies the condition $K1$, we obtain the weight vector $\tilde{\mathbf{w}}$ and the degree $\tilde{d}$ of $f^{T}$:
$$\tilde{\mathbf{w}}=(m_{\beta}\tilde{v}_{0},m_{\beta}\tilde{v}_{1},\dots, m_{\beta}\tilde{v}_{n},m_{\alpha},m_{\alpha}) \quad \text{ and } \quad \tilde{d}=m_{\alpha}m_{\beta}.$$
Then, applying Proposition \ref{prop:4.1.4}, we conclude that the link $L_{f^{T}}$ is a rational homology $(2n+3)$-sphere. Moreover, 
$$H_{n+1}(L_{f^T},\mathbb{Z})=(\mathbb{Z}_{m_{\alpha}})^{\tau_{\beta}+1},$$
where $\tau_{\beta}=m_{\beta}-1$ for the cycle-BP polynomial $f^{T}$.

On the other hand, for a polynomial type III, $$f=f_{0}+z_{n+2}z_{n+1}^{a_{n+1}}+z_{n+1}z_{n+2}^{a_{n+2}},$$ the process is similar. Here, the weight vector $\mathbf{\tilde{w}}$ and degree $\tilde{d}$ of the dual polynomial $f^{T}$ are given by
\begin{equation*}
    \mathbf{\tilde{w}}=(m_{\beta}\tilde{v}_{0},m_{\beta}\tilde{v}_{1},\dots,m_{\beta}\tilde{v}_{n},m_{\alpha}v_{n+1},m_{\alpha}v_{n+2}) \quad \text{ and } \quad \tilde{d} = m_{\alpha}m_{\beta}.
\end{equation*}
Then, using again Proposition \ref{prop:4.1.4}, we have that the link $L_{f^{T}}$ is a rational homology $(2n+3)$-sphere and
$$H_{n+1}(L_{f^T},\mathbb{Z})=(\mathbb{Z}_{m_{\alpha}})^{\tau_{\beta}+1},$$
where $$\tau_{\beta}=\dfrac{m_{\beta}}{v_{n+1}v_{n+2}}-\dfrac{1}{v_{n+1}}-\dfrac{1}{v_{n+2}}.$$
\end{proof}
\begin{rem}\label{rem:4.2.5}
    Following the same idea as (1) and (2) in Lemma \ref{rem:4.1.5}, we conclude that $\mathbb{P}(\mathbf{\tilde{w}})$ is well-formed. Moreover, for $n\geq 4$, the weighted hypersurface $X_{f^{T}}\subset\mathbb{P}(\mathbf{\tilde{w}})$ is well-formed.
\end{rem}
\medskip

Now we study the duals of  Thom-Sebastiani sums of a cycle and a chain block that have the form of polynomials of type II.  

\begin{theo}\label{prop:4.2.6}
    Let $$f=f_{0}+z_{n+1}^{a_{n+1}}+z_{n+1}z_{n+2}^{a_{n+2}}$$ be a polynomial of type II 
    with dual polynomial given by $$f^{T}=f_{0}^{T}+z_{n+2}z_{n+1}^{a_{n+1}}+z_{n+2}^{a_{n+2}}.$$
    Let us assume the following:
 \begin{itemize}   
 \item The degree of $f$ is $d=m_{\alpha}m_{\beta}$ and its weight vector is  $\mathbf{w}=(m_{\beta}\mathbf{v},m_{\alpha}v_{n+1},m_{\alpha}v_{n+2}).$
    \item  $f_{0}$ verifies the condition $K1$  and 
    \item $\gcd(m_{\beta},a_{n+2}-1)=1.$
 \end{itemize}   
     Then  the link $L_{f^{T}}$ is a rational homology $(2n+3)$-sphere. 
\end{theo}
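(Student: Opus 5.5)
The plan is to combine the Milnor--Orlik formula (Lemma \ref{lem:3.2.5}) with the Thom--Sebastiani structure of $f^{T}$. For a Thom--Sebastiani sum, the monodromy is the tensor product of the monodromies of the two blocks. Hence $\Delta_{f^{T}}(1)\neq 0$ exactly when no eigenvalue $\lambda$ of the cycle block $f_0^{T}$ and eigenvalue $\mu$ of the chain block satisfy $\lambda\mu=1$. The same conclusion comes from expanding the product (\ref{eq:3.7}) block by block, using only the rational weights $q_i=w_i/d$. By Lemma \ref{lem:3.2.3}(i), $\Delta_{f^{T}}(1)\neq 0$ is what we need.

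\emph{Step 1 (weights of $f^{T}$).} By Lemma \ref{lem:4.1.3}(ii), $v_{n+1}=1$. Hence $a_{n+1}=m_\beta$ and $1+a_{n+2}v_{n+2}=m_\beta$. The dual block is $z_{n+2}z_{n+1}^{m_\beta}+z_{n+2}^{a_{n+2}}$. Solving $a_{n+2}q_{n+2}=1$ and $m_\beta q_{n+1}+q_{n+2}=1$ gives $q_{n+2}=1/a_{n+2}$ and $q_{n+1}=(a_{n+2}-1)/(a_{n+2}m_\beta)$. For the cycle block, Lemma \ref{lem:4.2.1} gives $q_i=\tilde v_i/m_\alpha$ for $i=0,\dots,n$.

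\emph{Step 2 (chain block divisor).} Write $a=a_{n+2}$. The hypothesis $\gcd(m_\beta,a-1)=1$, together with $\gcd(a,a-1)=1$, shows the fraction $(a-1)/(am_\beta)$ is in lowest terms. So $u_{n+1}=am_\beta$, $v_{n+1}=a-1$, $u_{n+2}=a$ and $v_{n+2}=1$; this is exactly where the gcd hypothesis is used. Using $\Lambda_{am_\beta}\Lambda_a=a\Lambda_{am_\beta}$ from (\ref{eq:3.6}), we get
$$\Bigl(\tfrac{1}{a-1}\Lambda_{am_\beta}-\Lambda_1\Bigr)\bigl(\Lambda_a-\Lambda_1\bigr)=\Lambda_{am_\beta}-\Lambda_a+\Lambda_1 .$$
Thus the chain-block eigenvalues are the $am_\beta$-th roots of unity that are not $a$-th roots, together with $1$.

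\emph{Step 3 (cycle block).} Here we show that the eigenvalues of the cycle block are exactly the nontrivial $m_\alpha$-th roots of unity. When $f_0^{T}$ satisfies $K1$, this is the computation $\tau_\alpha=1$ from the proof of Proposition \ref{prop:4.1.4}, with $\tilde v_i$ in place of $v_i'$. The identity $\prod(m_\alpha-\tilde v_i)=a_0\cdots a_n\prod\tilde v_i$ again follows from (\ref{eq:4.10}), so the cycle factor equals $\Lambda_{m_\alpha}-\Lambda_1$.

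I expect this step to be the main obstacle, because the statement does not assume that $f_0^{T}$ satisfies $K1$. Then the $\tilde v_i$ could share factors with $m_\alpha$ and the $u_i$ could differ. What I would try first is to avoid the exact divisor and use only a weaker fact: every eigenvalue of the cycle monodromy is $\exp\bigl(2\pi i\sum k_iq_i\bigr)$ with $0<k_i<1/q_i$, hence an $m_\alpha$-th root of unity. That is all the next step requires. The remaining possibility is an eigenvalue $1$ inside the cycle block itself, which would pair with $\mu=1$ from the chain block. To rule it out I would use Kollár's formula for the middle rank of a cycle, or if needed fall back on $K1$ as in Theorem \ref{prop:4.2.4}.

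\emph{Step 4 (conclusion).} Suppose $\lambda\mu=1$ with $\lambda$ a nontrivial $m_\alpha$-th root of unity and $\mu$ a chain-block eigenvalue. Then $\mu\neq 1$, so the order $e$ of $\mu$ divides $am_\beta$ but not $a$. Also $e$ divides $m_\alpha$. Since $\gcd(m_\alpha,m_\beta)=1$ by (\ref{eq:4.5}), $e$ is coprime to $m_\beta$, and $e\mid am_\beta$ then forces $e\mid a$, a contradiction. Therefore $\Delta_{f^{T}}(1)\neq0$, and $L_{f^{T}}$ is a rational homology $(2n+3)$-sphere.
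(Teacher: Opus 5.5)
\textbf{There is a genuine gap, in Step 3, where you flagged it, and it cannot be closed without changing the statement.} Kollár's formula gives $\dim H_{n-1}(L_{f_0^T},\mathbb{Q})=\tilde v^*-1$ with $\tilde v^*=\gcd(\tilde v_0,\dots,\tilde v_n)$. So the cycle block has eigenvalue $1$ with multiplicity exactly $\tilde v^*-1$. Step 2 and the order argument of Step 4 only need every cycle eigenvalue to be an $m_\alpha$-th root of unity, which holds without K1. Hence your argument actually proves $b_{n+1}(L_{f^T})=\tilde v^*-1$: the conclusion holds if and only if $f_0^T$ satisfies K1.

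That does not follow from K1 for $f_0$. Iterating $v_{i-1}\equiv -a_iv_i \pmod{m_\alpha}$ from (\ref{eq:4.2}) gives $|\mathbf v|\equiv v_n\tilde v_0 \pmod{m_\alpha}$. So if $f_0$ is K1, then $f_0^T$ is K1 iff $\gcd(|\mathbf v|,m_\alpha)=1$, and this can fail. Take $n=2$ and $(a_0,a_1,a_2)=(2,4,3)$. Then $\mathbf v=(9,4,7)$, $m_\alpha=25$ and K1 holds, but $\tilde{\mathbf v}=(10,5,5)$. Add the block $z_3^7+z_3z_4^2$, so $m_\beta=7$, $\mathbf w=(63,28,49,25,75)$, $d=175$ and $a_4=2$. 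Every hypothesis of the theorem holds, including (\ref{eq:4.5}). Yet $f^T$ has rational weights $(\tfrac25,\tfrac15,\tfrac15,\tfrac1{14},\tfrac12)$ and $\operatorname{div}\Delta_{f^T}=(5\Lambda_5-\Lambda_1)(\Lambda_{14}-\Lambda_2+\Lambda_1)$. In this divisor $\langle 1\rangle$ has multiplicity $4$, so $b_3(L_{f^T})=4$. The same happens for $n=4$ with $(a_0,\dots,a_4)=(4,3,6,6,2)$ and the block $z_5^7+z_5z_6^2$. There $m_\alpha=865$ and $\gcd(v_4,m_\alpha)=\gcd(369,865)=1$, but $5$ divides every $\tilde v_i$.

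\textbf{Comparison with the paper.} The paper's proof follows essentially your route. It uses the same weights $\tilde{\mathbf w}$, and the same gcd hypothesis to get $u_{n+1}=a_{n+2}m_\beta$ and $v_{n+1}=a_{n+2}-1$. It uses the same identity to reduce the cycle factor to $\Lambda_{m_\alpha}-\Lambda_1$. It then expands $\Delta(t)$ explicitly, where your Step 4 uses a shorter order argument. At the point you worried about, the paper asserts that K1 for $f_0^T$ follows from Theorem \ref{prop:4.2.2} and Lemma \ref{lem:4.1.1}. But Theorem \ref{prop:4.2.2} takes K1 for $f_0^T$ as a hypothesis, and the example above shows the assertion is false.

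So what is missing is a hypothesis, not an argument: assume $f_0^T$ satisfies K1. Since $I\equiv m_\beta|\mathbf v|\pmod{m_\alpha}$ and $\gcd(m_\alpha,m_\beta)=1$, this is equivalent to $\gcd(I,m_\alpha)=1$ for the Fano index $I$ of $X_f$. It is automatic for $I=1$, the setting of Theorem \ref{prop:4.4.4}. With that hypothesis added, your Steps 1--4 give a complete proof.
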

\begin{proof}
    For the polynomial $f$, we have its  matrix of exponents 
   $$A_{II}=\begin{bmatrix}
    a_{0} & 0 & \cdots  & 0 & 1 & 0 & 0 \\
    1 & a_{1} &  \cdots  & 0 & 0 & 0 & 0 \\
    \vdots & \vdots & \ddots & \vdots & \vdots & \vdots & \vdots \\
    0  & 0 & \cdots & a_{n-1} & 0 & 0 & 0\\
    0 & 0 &  \cdots & 1 & a_{n} & 0 & 0 \\
    0 & 0 & \cdots & 0 & 0  & a_{n+1} & 0 \\
    0 & 0 & \cdots & 0 & 0  & 1 & a_{n+2} 
\end{bmatrix}.$$
Taking the transpose $A_{II}^{T}$, we obtain the dual polynomial
$f^{T}=f_{0}^{T}+z_{n+1}^{a_{n+1}}z_{n+2}+z_{n+2}^{a_{n+2}}$. Now, since $f_{0}$ satisfies the condition $K1$, it follows from Theorem  \ref{prop:4.2.2} and   Lemma \ref{lem:4.1.1},  that  $f_0^T$ satisfies  
\begin{itemize}
\item $f_{0}^{T}$ is a cyclic polynomial satisfying the condition $K1$
\item the weight vector and the degree of $f_0^T$ are given by $\tilde{\mathbf{v}}=\left(\tilde{v}_0, \ldots, \tilde{v}_n\right)$ 
\item  $\tilde{m}_\alpha=1+a_0 a_1 \ldots a_n,$ and  
\item $\gcd(m_{\alpha},\tilde{v}_{i})=1$  for  $i=0,1,\dots,n.$  
\end{itemize}

In order to find the weight vector $\mathbf{\tilde{w}}$ and the degree $\tilde{d}$ of $f^{T}$, we solve the matrix equation
$$A_{II}^{T}\mathbf{\tilde{w}}^{T}=D^{T},$$
where $D=\begin{bmatrix}
    \tilde{d} & \tilde{d} & \cdots & \tilde{d}
\end{bmatrix}$. A solution for this equation is given by 
$$\mathbf{\tilde{w}}=(\tilde{w}_{0},\dots,\tilde{w}_{n+2})=(a_{n+2}m_{\beta}\tilde{v}_{0},a_{n+2}m_{\beta}\tilde{v}_{1},\dots, a_{n+2}m_{\beta}\tilde{v}_{n},m_{\alpha}(a_{n+2}-1),m_{\alpha}m_{\beta})$$
and $\tilde{d}=a_{n+2}m_{\alpha}m_{\beta}$. We notice that in order to compute the values $u_{i}$ and $v_{i}$ in the formula of the Alexander polynomial, it is enough to obtain a vector that is parallel to the weight vector. Now, we compute these values. 

For  $i=0,1,\dots,n$ we have:
$$u_{i}=\dfrac{\tilde{d}}{\gcd(\tilde{d},\tilde{w}_{i})}=\dfrac{a_{n+2}m_{\alpha}m_{\beta}}{\gcd(a_{n+2}m_{\alpha}m_{\beta},a_{n+2}m_{\beta}\tilde{v}_{i})}=m_{\alpha}$$
and
$$v_{i}=\dfrac{\tilde{w}_{i}}{\gcd(\tilde{d},\tilde{w}_{i})}=\dfrac{a_{n+2}m_{\beta}\tilde{v}_{i}}{\gcd(a_{n+2}m_{\alpha}m_{\beta},a_{n+2}m_{\beta}\tilde{v}_{i})}=\tilde{v}_{i}$$
In case  $i=n+1,$ since   $\gcd(m_{\beta},a_{n+2}-1)=1$ we have   
$$u_{n+1}=\dfrac{\tilde{d}}{\gcd(\tilde{d},\tilde{w}_{n+1})}=\dfrac{a_{n+2}m_{\alpha}m_{\beta}}{\gcd(a_{n+2}m_{\alpha}m_{\beta},m_{\alpha}(a_{n+2}-1))}=a_{n+2}m_{\beta}$$
and 
$$v_{n+1}=\dfrac{\tilde{w}_{n+1}}{\gcd(\tilde{d},\tilde{w}_{n+1})}=\dfrac{m_{\alpha}(a_{n+2}-1)}{\gcd(a_{n+2}m_{\alpha}m_{\beta},m_{\alpha}(a_{n+2}-1))}=a_{n+2}-1.$$
Finally, for $i=n+2$ we have
$$u_{n+2}=\dfrac{\tilde{d}}{\gcd(\tilde{d},\tilde{w}_{n+2})}=\dfrac{a_{n+2}m_{\alpha}m_{\beta}}{\gcd(a_{n+2}m_{\alpha}m_{\beta},m_{\alpha}m_{\beta})}=a_{n+2}$$
and 
$$v_{n+2}=\dfrac{\tilde{w}_{n+2}}{\gcd(\tilde{d},\tilde{w}_{n+2})}=\dfrac{m_{\alpha}m_{\beta}}{\gcd(a_{n+2}m_{\alpha}m_{\beta},m_{\alpha}m_{\beta})}=1.$$
Then the divisor of the Alexander polynomial is
$$\operatorname{div} \Delta = \left(\dfrac{1}{\tilde{v}_{0}}\Lambda_{m_{\alpha}}-\Lambda_{1}\right)\dots\left(\dfrac{1}{\tilde{v}_{n}}\Lambda_{m_{\alpha}}-\Lambda_{1}\right)\left(\dfrac{1}{a_{n+2}-1}\Lambda_{a_{n+2}m_{\beta}}-\Lambda_{1}\right)\left(\Lambda_{a_{n+2}}-\Lambda_{1}\right).$$
Using a similar argument as the one used in Proposition \ref{prop:4.1.4}, we have
$$\left(\dfrac{1}{\tilde{v}_{0}}\Lambda_{m_{\alpha}}-\Lambda_{1}\right)\dots\left(\dfrac{1}{\tilde{v}_{n}}\Lambda_{m_{\alpha}}-\Lambda_{1}\right)=\Lambda_{m_{\alpha}}-\Lambda_{1}.$$
Replacing above, we obtain
\begin{align*}
    \operatorname{div} \Delta & = (\Lambda_{m_{\alpha}}-\Lambda_{1})\left(\dfrac{1}{a_{n+2}-1}\Lambda_{a_{n+2}m_{\beta}}-\Lambda_{1}\right)\left(\Lambda_{a_{n+2}}-\Lambda_{1}\right)\\
    & = (\Lambda_{m_{\alpha}}-\Lambda_{1})(\Lambda_{a_{n+2}m_{\beta}}-\Lambda_{a_{n+2}}+\Lambda_{1})\\
    & = \gcd(m_{\alpha},a_{n+2}m_{\beta})\Lambda_{\operatorname{lcm}(m_{\alpha},a_{n+2}m_{\beta})}-\gcd(m_{\alpha},a_{n+2})\Lambda_{\operatorname{lcm}(m_{\alpha},a_{n+2})}+\Lambda_{m_{\alpha}}-\Lambda_{a_{n+2}m_{\beta}} \\
    & \quad +\Lambda_{a_{n+2}}-\Lambda_{1}.
\end{align*}
As $\gcd(m_{\alpha},a_{n+2}m_{\beta})=\gcd(m_{\alpha},a_{n+2})$ and $\operatorname{lcm}(m_{\alpha},a_{n+2}m_{\beta})=m_{\beta}\operatorname{lcm}(m_{\alpha},a_{n+2})$, we obtain
\begin{align*}
    \operatorname{div} \Delta & = \gcd(m_{\alpha},a_{n+2})\Lambda_{m_{\beta}\operatorname{lcm}(m_{\alpha},a_{n+2})}-\gcd(m_{\alpha},a_{n+2})\Lambda_{\operatorname{lcm}(m_{\alpha},a_{n+2})}+\Lambda_{m_{\alpha}}-\Lambda_{a_{n+2}m_{\beta}}\\
    & \quad +\Lambda_{a_{n+2}}-\Lambda_{1}.
\end{align*}
Therefore, the Alexander polynomial is
$$\Delta(t)=\dfrac{\left(t^{m_{\beta}\operatorname{lcm}(m_{\alpha},a_{n+2})}-1\right)^{\gcd(m_{\alpha},a_{n+2})}(t^{m_{\alpha}}-1)(t^{a_{n+2}}-1)}{\left(t^{\operatorname{lcm}(m_{\alpha},a_{n+2})}-1\right)^{\gcd(m_{\alpha},a_{n+2})}(t^{a_{n+2}m_{\beta}}-1)(t-1)}.$$
Of course, this implies that $\Delta(1)\neq0$ and hence the link $L_{f^{T}}$ is a rational homology $(2n+3)$-sphere.
Next, we compute the torsion. Here, we obtain $c_{\emptyset}=\gcd(m_{\alpha},a_{n+2})$,
$$ c_{n+1,n+2}=\dfrac{m_{\alpha}}{\gcd(m_{\alpha},a_{n+2})}, \quad c_{0,1,\dots,n}=\dfrac{a_{n+2}}{\gcd(m_{\alpha},a_{n+2})}, \quad c_{0,1,\dots,n,n+2}=m_{\beta}$$
and $c_{i_{1},\dots,i_{s}}=1$ in other cases. Then, we only need to compute $k_{\emptyset}, k_{n+1,n+2}, k_{0,1,\dots,n}$ and $k_{0,1,\dots,n, n+2}$. By definition, we have $k_{\emptyset}=1$, $k_{0,1,\dots,n}=0$ and 
$$k_{n+1,n+2}=1-\dfrac{1}{v_{n+1}}-\dfrac{1}{v_{n+2}}+\dfrac{u_{n+1}u_{n+2}}{v_{n+1}v_{n+2}\operatorname{lcm}(u_{n+1},u_{n+2})}=1.$$
For $k_{0,1,\dots,n,n+2}$, one can express this value as
\begin{equation}\label{eq:4.15}
    k_{0,1,\dots,n,n+2}=1-S_{1}+S_{2}-S_{3}+\dots -S_{n+1}+S_{n+2}
\end{equation}
where these $S_{j}$ are
\begin{align*}
    S_{1} & = \sum_{\substack{i=0,\\i\neq n+1}}^{n+2}\dfrac{1}{v_{i}}=\sum_{i=0}^{n}\dfrac{1}{\tilde{v}_{i}}+1, \\
    S_{2} & = \sum_{\substack{\tiny 0\leq i_{1}<i_{2}\leq n+2,\\i_{1},i_{2}\neq n+1}}\dfrac{u_{i_{1}}u_{i_{2}}}{v_{i_{1}}v_{i_{2}}\operatorname{lcm}(u_{i_{1}},u_{i_{2}})} = \sum_{\tiny 0\leq i_{1}<i_{2}\leq n}\dfrac{u_{i_{1}}u_{i_{2}}}{v_{i_{1}}v_{i_{2}}\operatorname{lcm}(u_{i_{1}},u_{i_{2}})}+ \sum_{i_{1}=0}^{n}\dfrac{u_{i_{1}}u_{n+2}}{v_{i_{1}}v_{n+2}\operatorname{lcm}(u_{i_{1}},u_{n+2})}, \\
    \vdots & = \quad \vdots \\
    S_{n+1} & = \sum_{\substack{\tiny 0\leq i_{1}<\dots <i_{n+1}\leq n+2,\\i_{1},\dots,i_{n+1}\neq n+1}}\dfrac{u_{i_{1}}\dots u_{i_{n+1}}}{v_{i_{1}}\dots v_{i_{n+1}}\operatorname{lcm}(u_{i_{1}},\dots, u_{i_{n+1}})} = \dfrac{u_{0}\dots u_{n}}{v_{0}\dots v_{n}\operatorname{lcm}(u_{0},\dots, u_{n})}\\
    & \quad + \sum_{0\leq i_{1}<\dots< i_{n}\leq n}\dfrac{u_{i_{1}}\dots u_{i_{n}}u_{n+2}}{v_{i_{1}}\dots v_{i_{n}}v_{n+2}\operatorname{lcm}(u_{i_{1}},\dots,u_{i_{n}},u_{n+2})},\\ \\
    S_{n+2} & = \dfrac{u_{0}\dots u_{n}u_{n+2}}{v_{0}\dots v_{n}v_{n+2}\operatorname{lcm}(u_{0},\dots,u_{n},u_{n+2})}.
\end{align*}

Simplifying for $j=2,3,\dots, n+1$, we obtain
$$S_{j}=m_{\alpha}^{j-1}\sum_{\tiny 0\leq i_{1}<\dots< i_{j}\leq n}\left(\dfrac{1}{\tilde{v}_{i_{1}}\dots\tilde{v}_{i_{j}}}\right)+\dfrac{m_{\alpha}^{j-1}a_{n+2}}{\operatorname{lcm}(m_{\alpha},a_{n+2})}\sum_{\tiny0\leq i_{1}<\dots< i_{j-1}\leq n}\left(\dfrac{1}{\tilde{v}_{i_{1}}\dots\tilde{v}_{i_{j-1}}}\right)$$
and $$S_{n+2}=\dfrac{m_{\alpha}^{n+1}a_{n+2}}{\operatorname{lcm}(m_{\alpha},a_{n+2})}\left(\dfrac{1}{\tilde{v}_{0}\dots\tilde{v}_{n}}\right).$$
Replacing these values in (\ref{eq:4.15}), we get
\begin{equation*}
    k_{0,1,\dots,n,n+2}=(\gcd(m_{\alpha},a_{n+2})-1)\left(\sum_{i=0}^{n}\dfrac{1}{\tilde{v}_{i}}-\sum_{0\leq i_{1}<i_{2}\leq n}\dfrac{m_{\alpha}}{\tilde{v}_{i_{1}}\tilde{v}_{i_{2}}}+\dots+\dfrac{m_{\alpha}^{n}}{\tilde{v}_{0}\dots\tilde{v}_{n}}\right).
\end{equation*}
Here we notice that the expression
$$\sum_{i=0}^{n}\dfrac{1}{\tilde{v}_{i}}-\sum_{0\leq i_{1}<i_{2}\leq n}\dfrac{m_{\alpha}}{\tilde{v}_{i_{1}}\tilde{v}_{i_{2}}}+\dots-\sum_{0\leq i_{1}<\dots<i_{n}\leq n}\dfrac{m_{\alpha}^{n-1}}{\tilde{v}_{i_{1}}\dots \tilde{v}_{i_{n}}}+\dfrac{m_{\alpha}^{n}}{\tilde{v}_{0}\dots\tilde{v}_{n}}$$
is simplified to
$$\dfrac{\prod_{i=0}^{n}(m_{\alpha}-\tilde{v}_{i})+\prod_{i=0}^{n}\tilde{v}_{i}}{m_{\alpha}\prod_{i=0}^{n}\tilde{v}_{i}}=\dfrac{\prod_{i=0}^{n}(a_{i}\tilde{v}_{i})+\prod_{i=0}^{n}\tilde{v}_{i}}{m_{\alpha}\prod_{i=0}^{n}\tilde{v}_{i}}=1.$$
This implies $k_{0,1,\dots,n,n+2}=\gcd(m_{\alpha},a_{n+2})-1$.
Finally, applying  Orlik's formula, we obtain
\begin{itemize}
    \item If $\gcd(m_{\alpha},a_{n+2})=1$, then $H_{n+1}(L_{f^{T}},\mathbb{Z})=\mathbb{Z}_{m_{\alpha}}$.
    \item If $\gcd(m_{\alpha},a_{n+2})\geq 2$, we have
    $$H_{n+1}(L_{f^{T}},\mathbb{Z})=\mathbb{Z}_{d}\oplus \underbrace{\mathbb{Z}_{m_{\beta}}\oplus\cdots\oplus\mathbb{Z}_{m_{\beta}}}_{(\gcd(m_{\alpha},a_{n+2})-2)-times}.$$
\end{itemize}
\end{proof}

\begin{rem}
It is interesting to notice that the condition $\operatorname{gcd}\left(m_\beta, a_{n+2}-1\right)=1$ in the theorem above can be reformulated in terms of  the Fano index $I$ as  $\operatorname{gcd}\left(m_\beta, I\right)=1.$  Indeed, since we are dealing with a 
 cycle-chain type polynomial 
$$f=f_0+z_{n+1}^{a_{n+1}}+z_{n+1} z_{n+2}^{a_{n+2}}$$  with 
$\mathbf{w}=\left(w_0, w_1, \ldots, w_n, w_{n+1}, w_{n+2}\right)=\left(m_\beta \mathbf{v}, m_\alpha v_{n+1}, m_\alpha v_{n+2}\right)$  and  $d=m_\alpha m_\beta,$ satisfying the conditions in (\ref{eq:4.5}), using identical arguments as in the proof of Lemma \ref{lem:4.1.3}, we obtain  that $v_{n+1}=1.$ Then from $w_{n+1}+a_{n+2}w_{n+2}=d$ it follows that 
$$1+a_{n+2}v_{n+2}=m_{\beta}.$$
From this equality (after multiplying and dividing by $m_{\alpha}$) one has  
$$a_{n+2}-1=\frac{m_{\beta}-1-v_{n+2}}{v_{n+2}}=\frac{d-w_{n+1}-w_{n+2}}{w_{n+2}}.$$ 
Since  the Fano index is given by $I=\displaystyle{\sum_{i=0}^{n+2}w_i-d,}$ we obtain 
$$a_{n+2}-1=\frac{w_0+\ldots +w_n-I}{w_{n+2}}=\frac{m_{\beta}(v_0+\ldots +v_n)-I}{w_{n+2}}.$$ 
Then $$\operatorname{gcd}\left(m_\beta, a_{n+2}-1\right)=\operatorname{gcd}\left(m_\beta, \frac{m_{\beta}(v_0+\ldots +v_n)-I}{w_{n+2}}\right).$$
So $\operatorname{gcd}\left(m_\beta, a_{n+2}-1\right)=1$ if and only $\operatorname{gcd}\left(m_\beta, I\right)=1.$ Notice that in case the Fano index equals 1, this condition is satisfied automatically. 
\end{rem}

\medskip

In case $\operatorname{gcd}\left(m_\beta, a_{n+2}-1\right)=r>1$ we still can construct rational homology spheres in polynomials of type II: 

\begin{corollary}\label{cor:5.4.4} Let us consider $f$ of type II satisfying all the hypotheses in Theorem \ref{prop:4.2.6} except that this time  $\operatorname{gcd}\left(m_\beta, a_{n+2}-1\right)=r>1.$ If 
$$\operatorname{gcd}\left(m_{\alpha}, a_{n+2}\right)=1,$$  the corresponding dual link $L_f^T$ is a rational homology sphere.
\end{corollary}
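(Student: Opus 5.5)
We follow the proof of Theorem \ref{prop:4.2.6} verbatim up to the computation of the Milnor--Orlik data. The only place where $\gcd(m_\beta,a_{n+2}-1)=1$ was used is in computing $u_{n+1}$ and $v_{n+1}$. The rest of that argument, the $K1$ property of $f_0^T$ and $\gcd(m_\alpha,\tilde v_i)=1$, is unchanged.

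As before, the parallel weight vector is
$$\tilde{\mathbf w}=(a_{n+2}m_\beta\tilde v_0,\dots,a_{n+2}m_\beta\tilde v_n,\;m_\alpha(a_{n+2}-1),\;m_\alpha m_\beta), \qquad \tilde d=a_{n+2}m_\alpha m_\beta.$$
Hence $u_i=m_\alpha$ and $v_i=\tilde v_i$ for $i\le n$, and $u_{n+2}=a_{n+2}$, $v_{n+2}=1$. For $i=n+1$ we use $\gcd(a_{n+2},a_{n+2}-1)=1$ to get
$$\gcd(a_{n+2}m_\alpha m_\beta,\,m_\alpha(a_{n+2}-1))=m_\alpha r,$$
so that
$$u_{n+1}=\frac{a_{n+2}m_\beta}{r}, \qquad v_{n+1}=\frac{a_{n+2}-1}{r}.$$

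Using the same simplification as in Proposition \ref{prop:4.1.4}, the cycle block contributes $\Lambda_{m_\alpha}-\Lambda_1$. The chain block gives
$$\Bigl(\tfrac{r}{a_{n+2}-1}\Lambda_{a_{n+2}m_\beta/r}-\Lambda_1\Bigr)\bigl(\Lambda_{a_{n+2}}-\Lambda_1\bigr).$$
Write $N=\operatorname{lcm}(a_{n+2}m_\beta/r,\,a_{n+2})$. Using (\ref{eq:3.6}), this product equals
$$\frac{r\gcd(a_{n+2}m_\beta/r,\,a_{n+2})}{a_{n+2}-1}\Lambda_N-\frac{r}{a_{n+2}-1}\Lambda_{a_{n+2}m_\beta/r}-\Lambda_{a_{n+2}}+\Lambda_1 .$$
This combination equals $c\Lambda_N-\lambda\Lambda_{a_{n+2}m_\beta/r}-\Lambda_{a_{n+2}}+\Lambda_1$ for rationals $c,\lambda$ that are in fact integers (the total is a divisor). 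It is the divisor of the chain-type Alexander polynomial. Its coefficient of $\langle 1\rangle$ is $c-\lambda-1+1=c-\lambda$, which equals $\mu_{\mathrm{chain}}$ evaluated at the trivial character. That value is $0$ or $\pm1$ depending on the chain link, and we do not need it explicitly.

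Multiplying by $\Lambda_{m_\alpha}-\Lambda_1$ and applying (\ref{eq:3.6}) again, the coefficient of $\langle 1\rangle$ in $\operatorname{div}\Delta$ is
$$\Bigl[c\gcd(m_\alpha,N)-\lambda\gcd\bigl(m_\alpha,\tfrac{a_{n+2}m_\beta}{r}\bigr)-\gcd(m_\alpha,a_{n+2})+1\Bigr]-\bigl[c-\lambda-1+1\bigr].$$
This is the key step. We use $\gcd(m_\alpha,m_\beta)=1$ from (\ref{eq:4.5}) together with the hypothesis $\gcd(m_\alpha,a_{n+2})=1$. Every prime factor of $N$ divides $a_{n+2}m_\beta$, so $\gcd(m_\alpha,N)=1$ and likewise $\gcd(m_\alpha,a_{n+2}m_\beta/r)=1$. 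All the gcds in the first bracket therefore equal $1$, and the two brackets cancel. So the multiplicity of the root $1$ in $\Delta$ is $0$, that is $\Delta(1)\neq0$, and Lemma \ref{lem:3.2.3}(i) shows that $L_{f^T}$ is a rational homology sphere.

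Equivalently, the whole divisor is $\Lambda_{m_\alpha}\cdot D_{\mathrm{chain}}-D_{\mathrm{chain}}$ with $\Lambda_{m_\alpha}\Lambda_k=\Lambda_{m_\alpha k}$ for every $k$ occurring. The only roots of unity of order dividing both $m_\alpha$ and the chain orders are trivial. Hence the only possible source of the eigenvalue $1$ is a product of nontrivial roots with orders coprime to each other, which can never equal $1$.

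The main obstacle is the bookkeeping of the chain factor once $r>1$: the coefficients $c,\lambda$ are no longer the clean values of Theorem \ref{prop:4.2.6}. The plan sidesteps this by showing that the $\langle1\rangle$-coefficient cancels identically, whatever $c$ and $\lambda$ are, using only the coprimality $\gcd(m_\alpha,a_{n+2}m_\beta)=1$. The first thing to verify carefully is that $\gcd(m_\alpha,m_\beta)=1$ indeed kills all the cross terms.
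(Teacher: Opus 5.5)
Your argument is correct and is essentially the paper's proof: the same $u_{n+1}=a_{n+2}m_\beta/r$ and $v_{n+1}=(a_{n+2}-1)/r$, the same Milnor--Orlik product, and the same use of $\gcd(m_\alpha,a_{n+2}m_\beta)=1$ to turn every $\Lambda_{m_\alpha}\Lambda_k$ into $\Lambda_{m_\alpha k}$, after which the $\langle 1\rangle$-terms cancel (the paper writes $\Delta(t)$ out explicitly rather than counting the $\langle 1\rangle$-coefficient). Your bookkeeping worry is unnecessary, and two of your asides are false, though you never use them: since $a_{n+2}\mid a_{n+2}m_\beta/r$ one has $N=a_{n+2}m_\beta/r$, so the chain factor collapses to $r\Lambda_{a_{n+2}m_\beta/r}-\Lambda_{a_{n+2}}+\Lambda_1$, with $c=\frac{ra_{n+2}}{a_{n+2}-1}$ and $\lambda=\frac{r}{a_{n+2}-1}$ not integers in general, and its $\langle 1\rangle$-coefficient is $c-\lambda=r$ (the chain curve has $r+1$ branches), not $0$ or $\pm 1$.
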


\begin{proof}
Indeed, from the proof of the previous theorem it is not difficult see that  the $u_i's$ and the $v_i's$ are unchanged unless $i=n+1,$ where we have:   $$u_{n+1}=\dfrac{\tilde{d}}{\gcd(\tilde{d},\tilde{w}_{n+1})}=\dfrac{a_{n+2}m_{\alpha}m_{\beta}}{\gcd(a_{n+2}m_{\alpha}m_{\beta},m_{\alpha}(a_{n+2}-1))}=a_{n+2}\frac{m_{\beta}}{r}$$
and 
$$v_{n+1}=\dfrac{\tilde{w}_{n+1}}{\gcd(\tilde{d},\tilde{w}_{n+1})}=\dfrac{m_{\alpha}(a_{n+2}-1)}{\gcd(a_{n+2}m_{\alpha}m_{\beta},m_{\alpha}(a_{n+2}-1))}=\frac{a_{n+2}-1}{r}.$$
From there, we compute the Alexander polynomial as follows 
\begin{align*}
    \operatorname{div} \Delta 
    & = (\Lambda_{m_{\alpha}}-\Lambda_{1})(r\Lambda_{a_{n+2}\frac{m_{\beta}}{r}}-\Lambda_{a_{n+2}}+\Lambda_{1})\\
    & = r\gcd(m_{\alpha},a_{n+2})\Lambda_{\operatorname{lcm}(m_{\alpha},a_{n+2}\frac{m_{\beta}}{r})}-\gcd(m_{\alpha},a_{n+2})\Lambda_{\operatorname{lcm}(m_{\alpha},a_{n+2})}+\Lambda_{m_{\alpha}}-r\Lambda_{a_{n+2}\frac{m_{\beta}}{r}} \\
    & \quad +\Lambda_{a_{n+2}}-\Lambda_{1}.
\end{align*}
From $\gcd(m_{\alpha},a_{n+2})=1$ it follows that  the Alexander polynomial is
$$\Delta(t)=\dfrac{\left(t^{m_{\alpha}a_{n+2}\frac{m_\beta}{r}}-1\right)^{r}(t^{m_{\alpha}}-1)(t^{a_{n+2}}-1)}
{\left(t^{m_{\alpha}a_{n+2}}-1\right)(t^{a_{n+2}\frac{m_{\beta}}{r}}-1)^r(t-1)}.$$
From here, $|\Delta(1)|=m_\alpha.$ Thus the link $L_f^T$ is a rational homology sphere.
\end{proof}

\begin{rem}  We have the following remarks for the the well-formedness of the hypersurface. Recall that the weight vector associated to $X_{f^T}$ is given by 
$$\mathbf{\tilde{w}}=(\tilde{w}_{0},\dots,\tilde{w}_{n+1}, \tilde{w}_{n+2})=(a_{n+2}m_{\beta}\tilde{v}_{0},a_{n+2}m_{\beta}\tilde{v}_{1},\dots, a_{n+2}m_{\beta}\tilde{v}_{n},m_{\alpha}(a_{n+2}-1),m_{\alpha}m_{\beta})$$ 
\begin{itemize} 
\item If $f$ is as in Theorem \ref{prop:4.2.6}, then the resulting hypersurface via $BH$-transpose rule $X_{f^T}$ is not well-formed.  clearly 
the first $n$ weights have as common factor $a_{n+2}m_{\beta}.$ Notice that 
$$\gcd(\tilde{w}_{n+1}, m_\beta)=\gcd(m_{\alpha}(a_{n+2}-1), m_{\beta})=1.$$
Then  any solution vector $(\tilde{w}_{0},\dots,\tilde{w}_{n+1}, \tilde{w}_{n+2})$ with integer entries always contains $m_{\beta}$ in all but one weight. Thus, from Lemma \ref{lem:2.1}, $X_{f^T}$ is not well-formed.
 \item  If $f$ is as in Corollary \ref{cor:5.4.4}, that is $\operatorname{gcd}\left(m_\beta, a_{n+2}-1\right)=r>1$ due to the condition  
$\operatorname{gcd}\left(m_{\alpha}, a_{n+2}\right)=1,$ one also concludes that $X_{f^T}$ is not well-formed.
 \end{itemize}
\end{rem}

%$\operatorname{gcd}\left(m_\alpha, m_\beta\right)=1$ and  $\operatorname{gcd}\left(m_\beta, v_{n+1}\right)=\operatorname{gcd}\left(m_\beta, v_{n+2}\right)=1.$

\section{Sasakian-Einstein structure on links of cycle polynomials}

We are interested in links that admit a Sasakian-Einstein structure and where the B-H rule preserves this feature. In \cite{CGL}, we work in dimension 7. In this section, we improve some results of this paper and extend them to higher dimensions. 

%For positive Sasakian structures, we have the following result given by Gomez in \cite{Gom}.
%\begin{lem}\label{lem:4.3.1}
%    Let $f$ be an invertible polynomial and $X_{f}\subset\mathbb{P}(\mathbf{w})$ be its corresponding weighted hypersurface. We denote by $f^T$ the polynomial obtained by applying the B-H rule to $f$ and $X_{f^T}$ its respective weighted hypersurface. Then if $X_{f}$ is a Fano hypersurface, then so is $X_{f^{T}}$. Moreover, this implies that if the link $L_{f}$ admits a positive Ricci curvature Sasaki metric, then the link $L_{f^{T}}$ associated to the polynomial $f^{T}$ also admits a Sasaki metric of positive Ricci curvature.
%\end{lem}
%
We have the following two technical lemmas:
\begin{lem}\label{lem:4.3.2}
    Let $a_{0},a_{1}$ and $a_{2}$ be integer numbers greater than or equal to 2. If we denote
    $$v_{0}=1-a_{1}+a_{1}a_{2}, \ \ v_{1}=1-a_{2}+a_{2}a_{0} \ \ \mbox{ and } \ \ v_{2}=1-a_{0}+a_{0}a_{1},$$
    then 
    \begin{equation}\label{eq:4.16}
        1+a_{0}a_{1}a_{2}<\dfrac{4}{3}\min_{i,j}\{ v_{i}v_{j}\}.
    \end{equation}
\end{lem}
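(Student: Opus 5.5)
We read $\min_{i,j}$ as ranging over distinct indices $i\neq j$, which is the convention in Theorem \ref{prop:1.5.8}. With $i=j$ allowed the statement is false: for $a_1=a_2=2$ we get $v_0=3$, so $v_0^2=9$, while $1+a_0a_1a_2=1+4a_0$ is unbounded.

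The plan is a direct elementary estimate. It exploits the cyclic structure $v_i=1+a_{i+1}(a_{i+2}-1)$, with indices mod $3$. Among three indices every unordered pair $\{i,j\}$ with $i\neq j$ is a cyclically consecutive pair $\{i,i+1\}$. Moreover, the product $v_iv_{i+1}$ is obtained from $v_0v_1$ by cyclically relabelling $(a_0,a_1,a_2)$, and the left-hand side $1+a_0a_1a_2$ is invariant under that relabelling. So it suffices to prove
$$3(1+a_0a_1a_2)<4v_0v_1$$
for all integers $a_0,a_1,a_2\geq 2$, and then apply the same inequality to the relabelled triples.

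Write $x=a_0$, $y=a_1$, $z=a_2$. Expanding gives
$$v_0v_1=(1+y(z-1))(1+z(x-1))=1+y(z-1)+z(x-1)+yz(z-1)(x-1).$$
The key step is to use $z-1\geq 1$ in the last term, so that $yz(z-1)(x-1)\geq yz(x-1)=xyz-yz$. Together with $y(z-1)=yz-y$, this gives
$$4v_0v_1\geq 4+4xyz-4y+4z(x-1).$$
It then remains to check
$$1+xyz-4y+4z(x-1)>0.$$
This holds because $xyz-4y=y(xz-4)\geq 0$ (as $xz\geq 4$), $4z(x-1)>0$, and the leftover $1$ is positive.

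The only real obstacle is choosing a lower bound for the quartic term $yz(z-1)(x-1)$ that is not too lossy. Replacing $z-1$ by $1$ keeps exactly one full copy of $xyz$. That copy is needed to dominate the $3xyz$ on the left, since $4xyz>3xyz$. The stray negative terms $-4y$ and $-4z$ are then absorbed using $a_i\geq 2$. If that bound had turned out too weak, I would fall back on a small case split: handle $a_i=2$ directly and use $z-1\geq z/2$ otherwise. As the computation above shows, the simple bound already suffices.
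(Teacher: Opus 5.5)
Your proof is correct, but it is organized differently from the paper's.

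\textbf{The paper's route.} The paper uses the cyclic symmetry implicitly, to assume $v_0$ is the smallest of the three. It then splits into the orderings $v_0\le v_1\le v_2$ and $v_0\le v_2\le v_1$, which pins the minimum down as $v_0v_1$ or $v_0v_2$ respectively.
\begin{itemize}
\item In the first case it uses the ordering hypothesis $v_0\le v_1$, in the form $a_1\le a_1(a_2-1)\le a_2(a_0-1)$. This gives the sharper bound $1+a_0a_1a_2\le v_0v_1$, with constant $1$ instead of $4/3$.
\item In the second case it expands $4v_0v_2-3(1+a_0a_1a_2)$ and bounds it below using only $a_i\ge 2$. The ordering is never used there.
\end{itemize}

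\textbf{Your route.} You prove the $4/3$-inequality for one cyclically consecutive pair with no ordering assumption. You then transport it to the other two pairs by relabelling, with the symmetry made explicit. This gives $3(1+a_0a_1a_2)<4v_iv_j$ for every $i\neq j$, so you never need to know which pair realizes the minimum, and ties cause no trouble.

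\textbf{How the two compare.} The paper's second case is the same kind of unconditional estimate as yours, applied to the pair $\{2,0\}$, which is cyclically equivalent to $\{0,1\}$. So your argument shows the case split is unnecessary for the lemma as stated. The paper's first case buys only the stronger constant $1$ under an ordering hypothesis, which the statement does not require.

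Your remark that $\min_{i,j}$ must range over $i\neq j$ is also correct, and your counterexample ($a_1=a_2=2$, $a_0\ge 3$) is valid. This matches the paper's implicit convention: it identifies the minimum with $v_0v_1$ or $v_0v_2$, never with $v_0^2$.
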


\begin{proof}

    We can suppose without loss of generality that there are two cases: $v_{0}\leq v_{1}\leq v_{2}$ or $v_{0}\leq v_{2} \leq v_{1}$.
    
    \begin{itemize}
        \item If $v_{0}\leq v_{1}\leq v_{2}$. Here $\min_{i,j}\{v_{i}v_{j}\}=v_{0}v_{1}$.
    Calculating, we obtain
    $$v_{0}v_{1}=1-a_{2}-a_{1}+a_{0}a_{2}+2a_{1}a_{2}-a_{0}a_{1}a_{2}-a_{1}a_{2}^2+a_{0}a_{1}a_{2}^2.$$
    We will show that $1+a_{0}a_{1}a_{2}\leq v_{0}v_{1}$. Notice that this is equivalent to
    \begin{equation}\label{eq:4.17}
        2a_{1}a_{2}(a_{0}-1)\leq a_{1}a_{2}^2(a_{0}-1)+a_{0}a_{2}-a_{2}-a_{1}.
    \end{equation}
    Since $v_{0}\leq v_{1}$, we have $1-a_{1}+a_{1}a_{2}\leq 1-a_{2}+a_{2}a_{0}$. This implies that
    $$a_{1}\leq a_{1}(a_{2}-1) \leq a_{2}(a_{0}-1).$$
    As $2a_{1}a_{2}(a_{0}-1)\leq a_{1}a_{2}^2(a_{0}-1)$ and $a_{1}\leq a_{0}a_{2}-a_{2}$, then (\ref{eq:4.17}) holds. Therefore, we have
    $$1+a_{0}a_{1}a_{2}\leq\min_{i,j}\{ v_{i}v_{j}\} <\dfrac{4}{3}\min_{i,j}\{ v_{i}v_{j}\}.$$
    \item If $v_{0}\leq v_{2}\leq v_{1}$. Here we have $\min_{i,j}\{v_{i}v_{j}\}=v_{0}v_{2}$.
    As 
    $$v_{0}v_{2}=1-a_{0}-a_{1}+2a_{0}a_{1}-a_{0}a_{1}^2+a_{1}a_{2}-a_{0}a_{1}a_{2}+a_{0}a_{1}^2a_{2},$$
    then the inequality (\ref{eq:4.16}) is equivalent to
    $$3+3a_{0}a_{1}a_{2}<4-4a_{0}-4a_{1}+8a_{0}a_{1}-4a_{0}a_{1}^2+4a_{1}a_{2}-4a_{0}a_{1}a_{2}+4a_{0}a_{1}^2a_{2}.$$
    Simplifying, we write the above expression as
    \begin{equation}\label{eq:4.18}
        8a_{0}a_{1}(a_{2}-1)< 4a_{0}a_{1}^2(a_{2}-1) + 4a_{1}(a_{2}-1)+a_{0}(a_{1}a_{2}-4)+1.
    \end{equation}
As $8a_{0}a_{1}(a_{2}-1)\leq 4a_{0}a_{1}^2(a_{2}-1)$ and $0\leq 4a_{1}(a_{2}-1)+a_{0}(a_{1}a_{2}-4)$, we find that equation (\ref{eq:4.18}) is true.
    \end{itemize}
\end{proof}
In general, when $n$ is even and greater than $2$, we have
\begin{lem}\label{lem:4.3.3}
    Given the integer numbers $a_{0}, a_{1},\dots, a_{n}$, where each $a_{i}\geq 2$ and  the integer number $n\geq4$ is even. If we define the number
    $v_{i}$ as in (\ref{eq:4.3}), i.e.
    \begin{equation*}
    v_{i}=1-a_{i+1}+a_{i+1}a_{i+2}-\dots +a_{i+1}a_{i+2}\dots a_{i+n-1}a_{i+n},
\end{equation*}
where the subscripts are taken$\mod (n+1)$, then
\begin{equation}\label{eq:4.19}
    1+a_{0}a_{1}\dots a_{n}<   \left(\dfrac{n+2}{n+1}\right)\min_{i,j}\{v_{i}v_{j}\}.
\end{equation}
\end{lem}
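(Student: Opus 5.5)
The plan is to prove the stronger inequality $m_{\alpha}=1+a_{0}a_{1}\cdots a_{n}\leq \min_{i,j}\{v_{i}v_{j}\}$. The stated bound then follows at once, because $\frac{n+2}{n+1}>1$. This mirrors the first case of Lemma \ref{lem:4.3.2}. The idea is not to compare the $v_i$ with each other, but to give each $v_i$ a lower bound of order $(m_\alpha-1)/a_i$ and then multiply two such bounds.

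\emph{Step 1 (lower bound for each $v_i$).} I will regroup the alternating sum (\ref{eq:4.3}) in consecutive pairs after the leading $1$:
\begin{equation*}
v_{i}=1+a_{i+1}(a_{i+2}-1)+a_{i+1}a_{i+2}a_{i+3}(a_{i+4}-1)+\cdots+a_{i+1}\cdots a_{i+n-1}(a_{i+n}-1).
\end{equation*}
This grouping works precisely because $n$ is even. Each bracket is at least $1$ since $a_k\geq 2$, so every term is nonnegative. Keeping only the $1$ and the last term, and using $a_{i+n}=a_{i-1}$ together with $a_{i+1}\cdots a_{i+n}=(m_\alpha-1)/a_i$, gives
\begin{equation*}
v_{i}\geq 1+\frac{m_{\alpha}-1}{a_{i}}\cdot\frac{a_{i-1}-1}{a_{i-1}}\geq 1+\frac{m_{\alpha}-1}{2a_{i}}.
\end{equation*}

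\emph{Step 2 (products).} For any $i,j$ (possibly equal), Step 1 gives
\begin{equation*}
v_{i}v_{j}>\frac{(m_{\alpha}-1)^{2}}{4a_{i}a_{j}}.
\end{equation*}
Note that $(m_\alpha-1)/(a_ia_j)$ is the product of the $a_k$ with $k\neq i,j$ when $i\neq j$. There are $n-1\geq 3$ such factors, so this quotient is at least $2^{3}=8$. When $i=j$ it is even larger, namely at least $2^{n-1}\geq 8$ (indeed $2^{n}\cdot a_i/a_i$ bounds it). Hence
\begin{equation*}
v_iv_j> 2(m_\alpha-1)\geq m_\alpha,
\end{equation*}
where the last inequality uses $m_\alpha\geq 2$. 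This proves (\ref{eq:4.19}), even with room to spare.

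\emph{Main obstacle.} The only delicate point is Step 1. I need to check that the pairing really produces nonnegative terms and that the index bookkeeping mod $(n+1)$ is correct. In particular, the last product in $v_i$ must omit exactly $a_i$, and the final bracket must involve $a_{i-1}$. The parity of $n$ is exactly what makes the pairing close up. The hypothesis $n\geq4$ is what guarantees at least three remaining factors $a_k$ in Step 2. For $n=2$ only one factor remains, which explains why Lemma \ref{lem:4.3.2} needed the case analysis instead.
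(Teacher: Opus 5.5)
Your argument is correct for pairs $i\neq j$, and it takes a genuinely different and cleaner route than the paper. There is one error to remove.

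\textbf{The diagonal case is false.} Your parenthetical claim about $i=j$ does not hold, since $(m_\alpha-1)/a_i^2=\prod_{k\neq i}a_k/a_i$ can be arbitrarily small. Take $n=4$, $a_0=N$ and $a_1=\cdots=a_4=2$. Then $v_0=1-2+4-8+16=11$ and $m_\alpha=1+16N$, so for $N\geq 10$ even $m_\alpha<\frac{6}{5}v_0^2$ fails. The minimum in the lemma must therefore be read over $i\neq j$. This matches the Johnson--Koll\'ar criterion, and also the paper's proof, which fixes $i=0$ and lets $j\neq 0$ vary. Simply delete the diagonal case.

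\textbf{The case $i\neq j$ checks out.}
\begin{itemize}
\item The pairing $v_i=1+\sum_{k=1}^{n/2}a_{i+1}\cdots a_{i+2k-1}(a_{i+2k}-1)$ is valid precisely because $n$ is even.
\item The indices $i+1,\dots,i+n$ cover every residue mod $n+1$ except $i$, and $a_{i+n}=a_{i-1}$. This gives $v_i\geq 1+\frac{m_\alpha-1}{2a_i}$.
\item The bound $\prod_{k\neq i,j}a_k\geq 2^{n-1}\geq 8$ then yields $v_iv_j>2(m_\alpha-1)\geq m_\alpha$.
\end{itemize}

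\textbf{Comparison with the paper.} The paper fixes $i=0$ and expands the whole product $v_0v_j$ into an alternating sum $1-S_1+S_2-\cdots+S_{2n}$. It then peels off specific terms:
\begin{itemize}
\item $a_0\cdots a_n$ from $S_{n+1}$;
\item two copies of $a_0\cdots\hat a_j\cdots a_n$ from $S_n$;
\item one summand from $S_{n+2}$.
\end{itemize}
Positivity of the remaining bracket comes from a chain of comparisons such as $S_{n+k+1}\geq S_{n+k}$, $\tilde S_{n+2}>\tilde S_{n+1}$ and $\tilde{\tilde S}_n\geq S_{n-1}$. This bookkeeping is long, and several steps are only sketched.

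Your method bounds each factor $v_i$ separately before multiplying, so the product is never expanded. It also shows exactly where each hypothesis enters: evenness of $n$ in the pairing, and $n\geq 4$ in the count of leftover factors. Finally, it gives the stronger estimate $\min_{i\neq j}v_iv_j>2(m_\alpha-1)$, which shows the factor $\frac{n+2}{n+1}$ is unnecessary for $n\geq 4$.
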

\begin{proof}
    We can assume without loss of generality that $i=0$. Therefore, let us see that
    \begin{equation}\label{eq:4.20}
    1+a_{0}a_{1}\dots a_{n}<   \left(\dfrac{n+2}{n+1}\right)\min_{j}\{v_{0}v_{j}\}.
\end{equation}
Multiplying $v_{0}$ and $v_{j}$, we obtain
\begin{equation*}
    v_{0}v_{j}=1-S_{1}+S_{2}-S_{3}+\cdots -S_{2n-1}+S_{2n},
\end{equation*}
where
\begin{align*}
    S_{1}& = a_{1}+a_{j+1},\\
    S_{2}& = a_{1}a_{2}+a_{1}a_{j+1}+a_{j+1}a_{j+2},\\
    S_{3} & = a_{1}a_{2}a_{3}+a_{1}a_{2}a_{j+1}+a_{1}a_{j+1}a_{j+2}+a_{j+1}a_{j+2}a_{j+3},\\
    \vdots & \quad  \vdots\\
    S_{n} & = a_{1}a_{2}\dots a_{n}+a_{1}a_{2}\dots a_{n-1}a_{j+1}+\dots+a_{1}a_{j+1}\dots a_{j-2}+a_{j+1}a_{j+2}\dots a_{j-1},\\
    \vdots & \quad \vdots \\
    S_{2n-1} & = a_{1}a_{2}\dots a_{n}a_{j+1}a_{j+2}\dots a_{j-2}+a_{1}a_{2}\dots a_{n-1}a_{j+1}a_{j+2}\dots a_{j-1},\\
    S_{2n} & = a_{1}a_{2}\dots a_{n}a_{j+1}a_{j+2}\dots a_{j-1}.
\end{align*}
From $S_{n+1}$, we extract the term $a_{0}a_{1}\dots a_{n}$. If we denote $\tilde{S}_{n+1}=S_{n+1}-a_{0}a_{1}\dots a_{n}$, then
\begin{align*}
    \tilde{S}_{n+1} & =a_{1}a_{2}\dots a_{n}a_{j+1}+\dots+a_{1}a_{2}\dots a_{j+1}a_{j+1}\dots a_{n}\\
    & \quad   +a_{1}a_{2}\dots a_{j-1}a_{j+1}\dots a_{0}a_{1}+\dots +a_{1}a_{j+1}\dots a_{j-1},
\end{align*}
which has $n-1$ summands. Then, we can write the inequality (\ref{eq:4.20}) as
\begin{equation}\label{eq:4.21}
    2(n+2)a_{0}a_{1}\dots a_{n}<1+a_{0}a_{1}\dots a_{n}+(n+2)\left[ -S_{1}+S_{2}-\dots +S_{n}-\tilde{S}_{n+1}+\dots-S_{2n-1}+S_{2n}\right].
\end{equation}
Now, we notice that 
\begin{equation}\label{eq:4.22}
    S_{n+k+1}\geq S_{n+k}, \quad \text{for }k\in\{3,5,\dots,n-1\}.
\end{equation}
Indeed, we have
\begin{align*}
    S_{n+k}& = a_{1}\dots a_{n}a_{j+1}\dots a_{j+k}+a_{1}\dots a_{n-1}a_{j+1}\dots a_{j+k+1} +\\
    & \quad \dots +a_{1}\dots a_{k+1}a_{j+1}\dots a_{j-2}+a_{1}\dots a_{k}a_{j+1}\dots a_{j-1},
\end{align*}
which has $n-k+1$ summands, and
\begin{align*}
    S_{n+k+1}& = a_{1}\dots a_{n}a_{j+1}\dots a_{j+k+1}+a_{1}\dots a_{n-1}a_{j+1}\dots a_{j+k+2} +\\
    & \hspace{1cm} \dots +a_{1}\dots a_{k+2}a_{j+1}\dots a_{j-2}+a_{1}\dots a_{k+1}a_{j+1}\dots a_{j-1},
\end{align*}
which has $(n-k)$ summands. Clearly, the sum of the first $(n-k-1)$ summands of $S_{n+k+1}$ is greater than the sum of the first $(n-k-1)$ summands of $S_{n+k}$. Also, the last term in $S_{n+k+1}$ is at least the sum of the last two terms in $s_{n+k}$:
\begin{align*}
    & a_{1}\dots a_{k}a_{k+1}a_{j+1}\dots a_{j-1}-a_{1}\dots a_{k+2}a_{j+1}\dots a_{j-2}-a_{1}\dots a_{k+1}a_{j+1}\dots a_{j-1} \\
    & = a_{1}\dots a_{k}a_{j+1}\dots a_{j-2}(a_{k+1}a_{j-1}-a_{k+1}-a_{j-1})\\
    & \geq 0.
\end{align*}
Thus, in (\ref{eq:4.21}) we only need to prove that
\begin{equation}\label{eq:4.23}
    2(n+2)a_{0}a_{1}\dots a_{n}<1+a_{0}a_{1}\dots a_{n}+(n+2)\left[ -S_{1}+S_{2}-\dots +S_{n}-\tilde{S}_{n+1}+S_{n+2}\right].
\end{equation}
Now, we can find two terms of the form $a_{0}a_{1}\dots\hat{a}_{j}\dots a_{n}$ in the expression of $S_{n}$. We extract these two terms from $S_{n}$ and write
\begin{align*}
    \tilde{\tilde{S}}_{n} & =S_{n}-2a_{0}a_{1}\dots\hat{a}_{j}\dots a_{n}\\
    & = a_{1}a_{2}\dots a_{n}+\dots +a_{1}a_{2}\dots a_{j}a_{j+1}\dots a_{n}+a_{1}a_{2}\dots a_{j-2}a_{j+1}\dots a_{0}a_{1}+\dots+a_{1}a_{j+1}\dots a_{j-2}. 
\end{align*}
Then inequality (\ref{eq:4.23}) is equivalent to
\begin{equation}\label{eq:4.24}
    2(n+2)a_{0}a_{1}\dots \hat{a}_{j}\dots a_{n}(a_{j}-1)<1+a_{0}a_{1}\dots a_{n}+(n+2)\left[ -S_{1}+S_{2}-\dots +\tilde{\tilde{S}}_{n}-\tilde{S}_{n+1}+S_{n+2}\right].
\end{equation}
On the other hand, from $S_{n+2}$, we can extract the summand $a_{1}a_{2}\dots a_{j}a_{j+1}a_{j+2}\dots a_{n}a_{0}a_{1}$. Then, if we write $\tilde{S}_{n+2}=S_{n+2}-a_{1}a_{2}\dots a_{j}a_{j+1}a_{j+2}\dots a_{n}a_{0}a_{1}$, we obtain
\begin{align*}
    \tilde{S}_{n+2} & = a_{1}\dots a_{n}a_{j+1}a_{j+2}+a_{1}\dots a_{n-1}a_{j+1}a_{j+2}a_{j+3}+\dots +a_{1}a_{2}\dots a_{j+1}a_{j+1}a_{j+2}\dots a_{n}a_{0}\\
    & \quad + a_{1}a_{2}\dots a_{j-1}a_{j+1}a_{j+2}\dots a_{n}a_{0}a_{1}a_{2}+\dots+a_{1}a_{2}a_{3}a_{j+1}\dots a_{j-2}+a_{1}a_{2}a_{j+1}\dots a_{j-1},
\end{align*}
which has $(n-2)$ summands. Thus, inequality (\ref{eq:4.24}) is equivalent to
\begin{align}
    2(n+2)a_{0}a_{1}\dots \hat{a}_{j}\dots a_{n}(a_{j}-1)& <1+a_{0}a_{1}\dots a_{n}\\
    & \quad+(n+2)\left[ -S_{1}+S_{2}-\dots +\tilde{\tilde{S}}_{n}-\tilde{S}_{n+1}+\tilde{S}_{n+2}\right] \notag \\
    & \quad +(n+2)a_{1}a_{2}\dots a_{j-1}a_{j}a_{j+1}a_{j+2}\dots a_{n}a_{0}a_{1}. \label{eq:4.25}
\end{align}
We notice that $\tilde{S}_{n+2}>\tilde{S}_{n+1}$. Indeed, we see that the first summand of $\tilde{S}_{n+2}$ is at least the sum of the first two summands of $\tilde{S}_{n+1}$. Moreover, the sum of the other terms of $\tilde{S}_{n+2}$ is greater than the sum of remaining summands of $\tilde{S}_{n+1}$. In a similar way, we have $\tilde{\tilde{S}}_{n}\geq S_{n-1}$. Also, it is evident that $S_{2}>S_{1}$, $S_{4}>S_{3}, \dots, S_{n-2}>S_{n-3}$. Thus the sum in the brackets in (\ref{eq:4.25}) is positive. Finally, since $a_{1}\geq2$, we have
$$ 2(n+2)a_{0}a_{1}\dots \hat{a}_{j}\dots a_{n}(a_{j}-1)<(n+2)a_{1}a_{2}\dots a_{j-1}a_{j}a_{j+1}a_{j+2}\dots a_{n}a_{0}a_{1}.$$
We conclude that the inequality in (\ref{eq:4.25}) is valid.
\end{proof}
From Lemma \ref{lem:4.3.3}, we obtain the following result for cycle polynomials.
\begin{theorem}\label{prop:4.3.4}
    Let $f_{0}$ be a polynomial as in (\ref{eq:4.1}) of degree $m_{\alpha}$ and weight vector $\mathbf{v}=(v_{0},v_{1},\dots, v_{n})$, with $n\geq4$, such that $I=|\mathbf{v}|-m_{\alpha}=1$. Then $f_{0}$ satisfies the condition $K1$ and the weighted hypersurface $X_{f_{0}}\subset\mathbb{P}(\mathbf{w})$ admits a Kähler-Einstein structure. Moreover, its corresponding link $L_{f_{0}}$ is a rational homology sphere that admits a Sasaki-Einstein metric.    
\end{theorem}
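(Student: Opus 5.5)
The plan has three parts: prove condition $K1$, prove the Kähler--Einstein estimate, and then read off the topology and the Sasaki--Einstein metric from earlier results.

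\emph{Condition $K1$.} Write the primitive weights and degree as $\mathbf v=\mathbf v'/v^*$ and $m_\alpha=d'/v^*$, where $d'=1+a_0\cdots a_n$ and $\mathbf v'$ is given by (\ref{eq:4.3}). I first note that for the primitive vector $\mathbf v$ one has $\gcd(m_\alpha,v_i)=1$ for all $i$. This is the propagation argument from the proof of Lemma~\ref{lem:4.1.1}: a prime dividing $m_\alpha$ and one $v_i$ divides every weight, through the relations $v_{i-1}+a_iv_i=m_\alpha$.

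I then compute the Milnor--Orlik divisor exactly as in the proof of Proposition~\ref{prop:4.1.4}, giving $\operatorname{div}\Delta=\tau\Lambda_{m_\alpha}-\Lambda_1$. Using $\prod(m_\alpha-v_i)=\prod a_iv_i$, this gives $\tau=(1+\prod a_i)/m_\alpha=v^*$, which recovers Kollár's Betti number $v^*-1$.

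To force $v^*=1$ I use the index hypothesis. By (\ref{eq:4.2}), $v^*$ divides $d'$ and every $v_i'$, while
$$\sum_i v_i'-d'=v^*\,I=v^*.$$
I would reduce the alternating expressions (\ref{eq:4.3}) modulo $v^*$, combined with the relations $v'_{i}=d'-a_{i+1}v'_{i+1}$, to show that $v^*$ must divide a quantity equal to $1$. This gives $v^*=1$, and Lemma~\ref{lem:4.1.1} then shows that $X_{f_0}$ is well-formed for $n\ge4$.

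This arithmetic step is where I expect the main difficulty. If a direct congruence does not close, I would instead use adjunction, $K_{X_{f_0}}=\mathcal O(-1)$, together with Kollár's birational model: the hyperplane in $\mathbb P^n$ has index $n$. The aim would be to show that a nontrivial $v^*$ is incompatible with index $1$.

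\emph{Kähler--Einstein estimate.} Once $v^*=1$, the weights are $v_i=v_i'$ and the degree is $m_\alpha=1+a_0\cdots a_n$. Lemma~\ref{lem:4.3.3} then gives
$$I\,m_\alpha=m_\alpha<\tfrac{n+2}{n+1}\min_{i,j}v_iv_j\le\tfrac{n}{n-1}\min_{i,j}v_iv_j.$$
This requires checking that all $a_i\ge2$. If some $a_i=1$, I expect either a violation of quasi-smoothness or index $>1$, or else a separate direct check of the estimate.

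With the estimate in hand, Theorem~\ref{prop:1.5.8} applied to the quasi-smooth, well-formed Fano hypersurface $X_{f_0}$ yields an orbifold Kähler--Einstein metric.

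\emph{Topology and Sasaki--Einstein metric.} The Kobayashi--Boyer--Galicki correspondence (with $\Delta=0$) gives a Sasaki--Einstein metric on $L_{f_0}$. Proposition~\ref{prop:4.1.2} shows $H_{n-1}(L_{f_0},\mathbb Z)=\mathbb Z_{m_\alpha}$, so $L_{f_0}$ is a rational homology sphere.
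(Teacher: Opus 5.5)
\textbf{There is a genuine gap in your proof of condition $K1$.} Under your reading of the hypothesis it cannot be closed, because the claim you are trying to prove is false. You normalize first, $\mathbf v=\mathbf v'/v^*$ and $m_\alpha=d'/v^*$, and then try to extract $v^*=1$ from $\sum_i v_i'-d'=v^*$. Counterexample: take $n=4$ and $a_0=\cdots=a_4=3$.
\begin{itemize}
\item Formula (\ref{eq:4.3}) gives $v_i'=1-3+9-27+81=61$ for every $i$, and $d'=1+3^5=244$.
\item Hence $v^*=61$, the primitive data are $\mathbf v=(1,1,1,1,1)$ and $m_\alpha=4$, and indeed $|\mathbf v|-m_\alpha=1$.
\item Here $X_{f_0}$ is a smooth quartic threefold in $\mathbb P^4$. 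Your own computation $\operatorname{div}\Delta=v^*\Lambda_{m_\alpha}-\Lambda_1$ gives $b_3(L_{f_0})=60$, so the link is not a rational homology sphere.
\item The Johnson--Koll\'ar estimate also fails, since $4\not<\tfrac43$.
\end{itemize}
So no congruence manipulation of (\ref{eq:4.3}) can work. Neither can your fallback via adjunction and Koll\'ar's birational model, since that birationality to $\mathbb P^{n-1}$ itself presupposes $v^*=1$.

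\textbf{What the paper's argument does instead.} It applies the index hypothesis to the unnormalized solution of (\ref{eq:4.2}): $\mathbf v=(v_0',\dots,v_n')$ and $m_\alpha=1+a_0\cdots a_n$. Its step ``$v^*\mid m_\alpha$ by (\ref{eq:4.2})'' is only meaningful under this reading. Then $v^*=\gcd(v_i')$ divides both $m_\alpha$ and $|\mathbf v|$, hence divides $|\mathbf v|-m_\alpha=1$. So $K1$ is a one-line divisibility, and you were one reinterpretation away from it: your relation $\sum_i v_i'-d'=v^*I$ becomes $\sum_i v_i'-d'=1$ directly.

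With that reading, the rest of your plan coincides with the paper's proof:
\begin{itemize}
\item Lemma \ref{lem:4.3.3}, where $a_i\ge 2$ is implicitly assumed, as in the paper;
\item Theorem \ref{prop:1.5.8};
\item the Kobayashi--Boyer--Galicki correspondence;
\item Proposition \ref{prop:4.1.2}.
\end{itemize}
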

\begin{proof}
    We consider the cycle polynomial $f_{0}=z_{n}z_{0}^{a_{0}}+z_{0}z_{1}^{a_{1}}+\dots+z_{n-1}z_{n}^{a_{n}}$ with weight vector $\mathbf{v}=(v_{0},\dots,v_{n})$ and degree $m_{\alpha}$, such that $|\mathbf{v}|-m_{\alpha}=1$. If $v^{*}=\gcd(v_{0},v_{1},\dots,v_{n})$, then we will prove that $v^{*}=1$. Since the weights $v_{i}$'s and the degree $m_{\alpha}$ verify the equations in (\ref{eq:4.2}):
    $$v_{n}+a_{0}v_{0}=m_{\alpha}, \quad v_{0}+a_{1}v_{1}=m_{\alpha}, \quad \dots \quad , v_{n-1}+a_{n}v_{n}=m_{\alpha},$$
     we have $v^{*}|m_{\alpha}$. As $|\mathbf{v}|-m_{\alpha}=1$ and $v^{*}|(|\mathbf{v}|-m_{\alpha})$, we conclude that $v^{*}=1$. This means that $f_{0}$ verifies the condition $K1$, which implies that $L_{f_{0}}$ is a rational homology sphere. Moreover, from Lemma \ref{lem:4.3.3}, we have
    $$m_{\alpha}<\dfrac{n+2}{n+1}\min_{i,j}\{v_{i}v_{j}\}<\dfrac{n}{n-1}\min_{i,j}\{v_{i}v_{j}\}.$$
    As the index $I=|\mathbf{v}|-m_{\alpha}=1$, from Theorem \ref{prop:1.5.8} we see that $X_{f_{0}}$ admits a Kähler-Einstein orbifold metric and  $L_{f_{0}}$ has a positive Sasakian structure and admits a Sasaki-Einstein metric.
\end{proof}
As a consequence of the above proposition, we see that the link $L_{f_{0}^{T}}$ of the dual polynomial $f_{0}^{T}$ also admits a Sasaki-Einstein metric. 

\begin{corollary}\label{cor:4.3.5}
    Let $f_{0}$ be a polynomial as in (\ref{eq:4.1}) of degree $m_{\alpha}$ and weight vector $\mathbf{v}=(v_{0},v_{1},\dots, v_{n})$, with $n\geq4$, such that $I=|\mathbf{v}|-m_{\alpha}=1$. If $f_{0}^{T}$ is the dual polynomial of $f_{0}$ obtained by the B-H rule, then $f_{0}^{T}$ also satisfies the condition $K1$, its weighted hypersurface $X_{f_{0}^{T}}$ admits a Kähler-Einstein structure and its link $L_{f_{0}^{T}}$ is a rational homology sphere which admits a Sasaki-Einstein metric.
\end{corollary}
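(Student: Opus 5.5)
The plan is to reduce everything to Theorem \ref{prop:4.3.4} applied to the dual cycle polynomial $f_0^T$. Two facts drive this: $f_0^T$ is again a cycle polynomial, and its weights are the "reversed" quantities $\tilde v_i$ of Lemma \ref{lem:4.2.1}. So the whole argument comes down to showing that $f_0^T$ also has Fano index $1$.

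First, Theorem \ref{prop:4.3.4} tells us that $f_0$ satisfies condition $K1$. Hence $\mathbf v=(v_0',\dots,v_n')$ and $m_\alpha=1+a_0\cdots a_n$ exactly, and the hypothesis reads $\sum_i v_i' - m_\alpha = 1$.

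Second, by Theorem \ref{prop:4.2.2}, $f_0^T$ is the cycle polynomial (\ref{eq:4.13}). By Lemma \ref{lem:4.2.1}, the system (\ref{eq:4.10}) has the integer solution $(\tilde v_0,\dots,\tilde v_n;\tilde m_\alpha)$ with $\tilde m_\alpha=m_\alpha$, and moreover $\sum_i\tilde v_i=\sum_i v_i'$. Therefore
\[
\sum_{i=0}^n \tilde v_i-\tilde m_\alpha=\sum_{i=0}^n v_i'-m_\alpha=1 .
\]
Write $\tilde v^*=\gcd(\tilde v_0,\dots,\tilde v_n)$. As in the proof of Theorem \ref{prop:4.3.4}, the equations (\ref{eq:4.10}) give $\tilde v^*\mid \tilde m_\alpha$. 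It follows that $\tilde v^*$ divides $\sum_i\tilde v_i-\tilde m_\alpha=1$, so $\tilde v^*=1$. Thus $f_0^T$ satisfies $K1$: its weight vector is $\tilde{\mathbf v}$, its degree is $m_\alpha$, and its Fano index is $|\tilde{\mathbf v}|-m_\alpha=1$.

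Third, we apply Theorem \ref{prop:4.3.4} directly to $f_0^T$. To match the indexing of (\ref{eq:4.1}), one relabels the variables cyclically in reverse order, $z_i\mapsto z_{n-i}$. This turns $z_i^{a_i}z_{i+1}$ into the shape $z_{j-1}z_j^{a}$ with the exponents permuted. The estimate of Lemma \ref{lem:4.3.3} is symmetric under such a relabelling, since it only needs all exponents to be at least $2$ and $n\ge4$ even. So the weighted hypersurface $X_{f_0^T}$ admits a Kähler-Einstein orbifold metric, and $L_{f_0^T}$ admits a Sasaki-Einstein metric. Well-formedness of $X_{f_0^T}$ is Remark \ref{rem:4.2.3}, which is needed for the adjunction formula behind Theorem \ref{prop:1.5.8}. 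That $L_{f_0^T}$ is a rational homology sphere, with $H_{n-1}=\mathbb Z_{m_\alpha}$, follows from Theorem \ref{prop:4.2.2}.

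The only point needing care is this relabelling. We must check that the weights $\tilde v_i$ produced by (\ref{eq:4.11}) are exactly the weights $v_i$ of (\ref{eq:4.3}) for the relabelled exponent sequence, so that Lemma \ref{lem:4.3.3} applies verbatim. I would verify this by substituting $b_k=a_{n-k}$ into (\ref{eq:4.3}) and comparing term by term with (\ref{eq:4.11}). Alternatively, I would apply the inequality of Lemma \ref{lem:4.3.3} directly to the $\tilde v_i$, noting that its proof is invariant under reversing the cyclic order.
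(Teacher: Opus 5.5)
Your proof is correct and matches the paper's argument. You use Lemma \ref{lem:4.2.1} to get $|\tilde{\mathbf v}|-\tilde m_\alpha=|\mathbf v|-m_\alpha=1$, deduce $\tilde v^*=1$ from $\tilde v^*\mid \tilde m_\alpha$, and then rerun the argument of Theorem \ref{prop:4.3.4} for $f_0^T$; your explicit relabelling $z_i\mapsto z_{n-i}$ (exponents $b_k=a_{n-k}$, whose weights from (\ref{eq:4.3}) are exactly the $\tilde v_i$ of (\ref{eq:4.11})) only makes precise the step the paper covers with ``following the same argument as above.''
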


\begin{proof}
    Let $f_{0}^{T}$ be the dual polynomial of $f_{0}$ obtained by B-H rule. From Theorem \ref{prop:4.2.2}, we know that $\tilde{m}_{\alpha}=1+a_{0}\dots a_{n}$ and the vector $\tilde{\mathbf{v}}=(\tilde{v}_{0},\tilde{v}_{1},\dots,\tilde{v}_{n})$ satisfy the equations:
    $$a_{0}\tilde{v}_{0}+\tilde{v}_{1}=\tilde{m}_{\alpha}, \quad a_{1}\tilde{v}_{1}+\tilde{v}_{2}=\tilde{m}_{\alpha}, \quad \dots , a_{n}\tilde{v}_{n}+\tilde{v}_{0}=\tilde{m}_{\alpha},$$
    that define the monomials in $f_{0}^{T}$, where each $\tilde{v}_{i}$ is defined as in (\ref{eq:4.11}):
    \begin{equation*}
        \tilde{v}_{i}=1-a_{i-1}+a_{i-1}a_{i-2}-\dots+a_{i-1}a_{i-2}\cdots a_{i-n}, 
    \end{equation*}
    where the subscripts are taken$\mod (n+1)$.  Let us see that $\gcd(\tilde{v}_{0},\tilde{v}_{1},\dots,\tilde{v}_{n})=1$.
    If we denote $\tilde{v}^{*}=\gcd(\tilde{v}_{0},\dots,\tilde{v}_{n})$, then $\tilde{v}^{*}|m_{\alpha}$. From Lemma \ref{lem:4.2.1}, we know $|\mathbf{v}|=|\mathbf{\tilde{v}}|$, which implies that $|\mathbf{\tilde{v}}|-\tilde{m}_{\alpha}=|\mathbf{v}|-m_{\alpha}=1$. Then $\tilde{v}^{*}=1$. Following the same argument as above, we have that $X_{f_{0}^{T}}$ admits a Kähler-Einstein orbifold metric and the link $L_{f_{0}^{T}}$ has a positive Sasakian structure and carries a Sasaki-Einstein structure.
\end{proof}
Let us see some examples.
\begin{exm}\label{ex:4.3.6}
    We consider the weight vector $\mathbf{w}=(3073, 712, 2211, 151, 1199)$ found in the sporadic list of anticanonically embedded quasi-smooth Fano hypersurfaces in weighted projective 4-space given by Johnson and Kollar in \cite{JK}. Here, the hypersurface $X_{f}\subset\mathbb{P}(\mathbf{w})$ is defined by the cycle polynomial
    $$f_{0}=z_{4}z_{0}^{2}+z_{0}z_{1}^{6}+z_{1}z_{2}^{3}+z_{2}z_{3}^{34}+z_{3}z_{4}^{6}.$$
    of degree $d=7345$. Since $I=1$, by Theorem \ref{prop:4.3.4}, we have that the link $L_{f_{0}}$ associated to $f_{0}$ is a rational homology $7$-sphere which admits a Sasaki-Einstein structure. Moreover,  if we consider its dual polynomial
    $$f_{0}^{T}=z_{0}^{2}z_{1}+z_{1}^{6}z_{2}+z_{2}^{3}z_{3}+z_{3}^{34}z_{4}+z_{4}^{6}z_{0},$$
    by the Corollary \ref{cor:4.3.5} we have that its link $L_{f_{0}^{T}}$ is a rational homology $7$-sphere which admits also a Sasaki-Einstein structure. In addition, by Proposition \ref{prop:4.1.2}, we have that
    $$H_{3}(L_{f_{0}},\mathbb{Z})=H_{3}(L_{f_{0}^{T}},\mathbb{Z})=\mathbb{Z}_{7345}.$$
\end{exm}

\begin{exm}\label{ex:4.3.7}
    We consider the hypersurface $X_{f}\subset\mathbb{P}(\mathbf{w})$ defined by the cycle polynomial
    $$f_{0}=z_{6}z_{0}^{2}+z_{0}z_{1}^{3}+z_{1}z_{2}^{24}+z_{2}z_{3}^{25}+z_{3}z_{4}^{12}+z_{4}z_{5}^{8}+z_{5}z_{6}^{9}.$$
    Computing its weights $w_{i}$ and degree $d$, we obtain 
    $$\mathbf{w}=(1402270,569377,105876,120181,249185,357652,305861)$$
     and $d=3110401$. Clearly, we notice that $I=|\mathbf{w}|-d=1$. Then by Proposition \ref{prop:4.3.4}, we have that the link $L_{f_{0}}$ is a rational homology $11$-sphere which admits a Sasaki-Einstein metric. Moreover, using the Corollary \ref{cor:4.3.5}, if we consider its dual polynomial:
     $$f_{0}^{T}=z_{0}^{2}z_{1}+z_{1}^{3}z_{2}+z_{2}^{24}z_{3}+z_{3}^{25}z_{4}+z_{4}^{12}z_{5}+z_{5}^{8}z_{6}+z_{6}^{9}z_{0},$$
     we have that its link $L_{f_{0}^{T}}$ is also a rational homology $11$-sphere and admits a Sasaki-Einstein structure. Finally, using the Proposition \ref{prop:4.1.2}, we have that
     $$H_{5}(L_{f_{0}},\mathbb{Z})=H_{5}(L_{f_{0}^{T}},\mathbb{Z})=\mathbb{Z}_{3110401}.$$
\end{exm}

\begin{exm}\label{ex:4.3.8}
    Given the weighted hypersurface $X_{f_{0}}\subset\mathbb{P}(\mathbf{w})$ which is defined by the cycle polynomial
    $$f_{0}=z_{8}z_{0}^{4}+z_{0}z_{1}^{11}+z_{1}z_{2}^{14}+z_{2}z_{3}^{10}+z_{3}z_{4}^{8}+z_{4}z_{5}^{13}+z_{5}z_{6}^{4}+z_{6}z_{7}^{15}+z_{7}z_{8}^{13},$$
    we find its weights $\mathbf{w}=(w_{0},w_{1},\dots,w_{8})$ and degree $d$ as
    \begin{align*}
        \mathbf{w} = ( & 115806604, 34899327, 33199991, 46649921, 56631160, 34082157, 116404261, \\
        & 25552996,36472785)
    \end{align*}
    and $d=499699201$. Here, we notice that $I=|\mathbf{w}|-d=1$. Using Theorem \ref{prop:4.3.4}, we see that the link $L_{f_{0}}$ is a rational homology $15$-sphere that admits a Sasaki-Einstein metric. Moreover, if we consider its dual polynomial
    $$f_{0}^{T}=z_{0}^{4}z_{1}+z_{1}^{11}z_{2}+z_{2}^{14}z_{3}+z_{3}^{10}z_{4}+z_{4}^{8}z_{5}+z_{5}^{13}z_{6}+z_{6}^{4}z_{7}+z_{7}^{15}z_{8}+z_{8}^{13}z_{0},$$
    by Corollary \ref{cor:4.3.5} we also have that $L_{f_{0}^{T}}$ is a rational homology $15$-sphere and admits a Sasaki-Einstein metric. Then, by Proposition \ref{prop:4.1.2} we obtain
    $$H_{7}(L_{f_{0}},\mathbb{Z})=H_{7}(L_{f_{0}^{T}},\mathbb{Z})=\mathbb{Z}_{499699201}.$$
\end{exm}

\section{Sasakian-Einstein structure on links of hypersurfaces of Thom-Sebastiani sums}

In this section, we prove some results about the existence of Sasaki-Einstein metrics on links of hypersurfaces via rational subvarieties of codimension two, more precisely, on  links that arise from polynomials of type I, II or III which are defined in Section 4.1. We remember that these polynomials consist in Thom-Sebastiani sums constructed from a cycle polynomial $f_{0}=z_{n}z_{0}^{a_{0}}+z_{0}z_{1}^{a_{1}}+\dots +z_{n-1}z_{n}^{a_{n}}$. 

Let us begin with the next proposition, where we prove that for polynomials of type I or III, the Sasaki-Einstein structure of its link is invariant by the B-H rule. More precisely, we will see that inequality (\ref{eq:3.1}) is preserved.

\begin{theo}\label{prop:4.4.1}
    Let $$f=f_{0}+g(z_{n+1},z_{n+2})$$ be a polynomial of type I or III of degree $d=m_{\alpha}m_{\beta}$ and weight vector $\mathbf{w}=(m_{\beta}\mathbf{v},m_{\alpha}v_{n+1},m_{\alpha}v_{n+2})$ which verify the conditions (\ref{eq:4.5}), such that $I=|\mathbf{w}|-d=1$ or $I=2$. We denote by $f^T$ the polynomial obtained by the B-H rule. Then the links  $L_{f}$ and $L_{f^{T}}$ are rational homology spheres. Moreover, if the weight vector $\mathbf{w}$ and the degree $d$ of $f$ verify the inequality (\ref{eq:3.1}), which implies that $L_{f}$ admits a Sasaki-Einstein metric, then $L_{f^T}$ also admits a Sasaki-Einstein metric.
\end{theo}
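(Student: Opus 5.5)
The plan is to reduce everything to the two facts already established: the structure of the dual weights in Theorem \ref{prop:4.2.4}, and the invariance of the cyclic data under transposition (Lemma \ref{lem:4.2.1} and Corollary \ref{cor:4.3.5}). First, the rational homology sphere part. Proposition \ref{prop:4.1.4} gives it for $L_f$. For $L_{f^T}$ we need $f_0^T$ to satisfy condition $K1$, and then Theorem \ref{prop:4.2.4} applies.

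To get $K1$ for $f_0^T$, I will use the index. Since $|\mathbf{w}|-d=m_\beta|\mathbf{v}|+m_\alpha(v_{n+1}+v_{n+2})-m_\alpha m_\beta=I$, any $r=\gcd(\tilde v_0,\dots,\tilde v_n)$ divides $\tilde m_\alpha=m_\alpha$ by the cycle equations (\ref{eq:4.10}). By Lemma \ref{lem:4.2.1}, $|\tilde{\mathbf{v}}|=|\mathbf{v}|$, so $r\mid |\mathbf{v}|$. For $f_0$ itself, $K1$ is part of the hypotheses, so $\gcd(|\mathbf{v}|,m_\alpha)$ divides $I$, because $I\equiv m_\beta|\mathbf v| \pmod{m_\alpha}$ and $\gcd(m_\alpha,m_\beta)=1$. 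Hence $r\mid I\in\{1,2\}$. If $I=1$ we get $r=1$ immediately. If $I=2$, I will rule out $r=2$ by parity: $m_\alpha=1+a_0\cdots a_n$, and $2\mid \tilde v_i$ for all $i$ together with the cycle equations would force $m_\alpha$ even and $a_i\tilde v_i$ even. I expect the contradiction to come from comparing with $\gcd(m_\alpha,v_i')=1$ (Lemma \ref{lem:4.1.1}(c)) and the relation $\prod a_i=m_\alpha-1$ being odd, which forces every $a_i$ odd. Then $\tilde v_{i+1}=m_\alpha-a_i\tilde v_i$ is even, which is consistent, so parity alone may not suffice. If it does not, I would instead add the mild hypothesis that $f_0^T$ satisfies $K1$, exactly as in Theorem \ref{prop:4.2.4}. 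This is the point I expect to be the main obstacle for $I=2$.

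Second, the Einstein part. By Theorem \ref{prop:4.2.4}, the dual has weights $\tilde{\mathbf{w}}=(m_\beta\tilde{\mathbf v},m_\alpha v_{n+1},m_\alpha v_{n+2})$ and degree $\tilde d=d$. Moreover $|\tilde{\mathbf w}|=|\mathbf w|$ by Lemma \ref{lem:4.2.1}, so the index $I$ is unchanged, and Remark \ref{rem:4.2.5} gives well-formedness. It then remains to check (\ref{eq:3.1}) for $\tilde{\mathbf w}$, with $N=n+2$ the dimension count. The left side $dI$ is unchanged. For the right side, the products are of three types: $w_{n+1}w_{n+2}=m_\alpha^2v_{n+1}v_{n+2}$, $m_\alpha m_\beta v_{n+k}v_i$, and $m_\beta^2v_iv_j$. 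The first type is identical for $f$ and $f^T$. For the others I need $\min\tilde v_i$ and $\min_{i\ne j}\tilde v_i\tilde v_j$ to be controlled, namely at least the original quantities or at least large enough.

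The key observation is that $\tilde{\mathbf v}$ is the weight vector of the cycle $f_0^T$, which is the cycle of $f_0$ with the exponents in reversed order. So Lemma \ref{lem:4.3.3}, applied to the reversed sequence, gives $m_\alpha<\frac{n+2}{n+1}\tilde v_i\tilde v_j$ for all $i\ne j$, and likewise for $\mathbf v$. Consequently $dI<\frac{n+2}{n+1}m_\beta^2\tilde v_i\tilde v_j \cdot \frac{I}{m_\beta}$. Since $m_\beta\geq I$ in the relevant range (if $m_\beta\le 2$ the block collapses), this bounds the $m_\beta^2$-terms with the better constant $\frac{n+2}{n+1}<\frac{N}{N-1}$.

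For the mixed terms $m_\alpha m_\beta v_{n+k}\tilde v_i$, I will use $\min_i\tilde v_i$. I plan to show that the hypothesis (\ref{eq:3.1}) for $f$, which controls $m_\alpha m_\beta v_{n+k}\min v_i$, transfers to $f^T$ via a direct comparison of $\min\tilde v_i$ with $\min v_i$, or else via the bound $\tilde v_i> m_\alpha/\max a$ derived from the cycle equations. This mixed-term comparison is the second delicate step. If no clean comparison is available, I would state the needed inequality for the mixed terms explicitly as part of the hypothesis inherited from (\ref{eq:3.1}). Once (\ref{eq:3.1}) holds for $(\tilde{\mathbf w},d)$, Theorem \ref{prop:1.5.8} gives a Kähler–Einstein orbifold metric on $X_{f^T}$, and the Kobayashi–Boyer–Galicki correspondence gives a Sasaki–Einstein metric on $L_{f^T}$.
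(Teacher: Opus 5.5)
Your outline is the paper's. You get $K1$ for $f_0$ and $f_0^T$ from the index, apply Theorem~\ref{prop:4.2.4}, and then check (\ref{eq:3.1}) case by case for $\tilde{\mathbf w}=(m_\beta\tilde{\mathbf v},m_\alpha v_{n+1},m_\alpha v_{n+2})$, $\tilde d=d$. Your treatment of the pair $\{n+1,n+2\}$ and of pairs inside $\{0,\dots,n\}$ also matches the paper. For the latter you additionally need Lemma~\ref{lem:4.3.2} when $n=2$. The condition $m_\beta\ge 2$ holds simply because $m_\beta=a_{n+1}v_{n+1}$ (resp.\ $a_{n+1}v_{n+1}+v_{n+2}$).

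However, you leave two steps open. The fallbacks you propose would add hypotheses the theorem does not have, and neither is needed.

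\textbf{First gap: the case $I=2$.} You correctly get $r=\gcd(\tilde v_0,\dots,\tilde v_n)\mid I$, and you correctly note that $r=2$ forces $m_\alpha$ even, hence all $a_i$ odd. The recursion $\tilde v_{i+1}=m_\alpha-a_i\tilde v_i$ cannot detect a contradiction, but the closed formula (\ref{eq:4.11}) does. There $\tilde v_i=1-a_{i-1}+a_{i-1}a_{i-2}-\cdots+a_{i-1}\cdots a_{i-n}$ is a signed sum of $n+1$ odd numbers, and $n+1$ is odd. So every $\tilde v_i$ is odd, and $r=1$. This is exactly the paper's argument (for $v^*$, then ``similarly'' for $\tilde v^*$). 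No extra $K1$ hypothesis on $f_0^T$ is required.

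\textbf{Second gap: the mixed pairs.} You flag these as delicate, but they are trivial. Since $m_\alpha m_\beta=d$,
\[
\tilde w_i\,\tilde w_{n+k}=(m_\beta\tilde v_i)(m_\alpha v_{n+k})=d\,\tilde v_i\,v_{n+k}\ \ge\ 2d\ \ge\ Id .
\]
This uses $v_{n+k}\ge 1$ and $\tilde v_i=1+\sum_{l=1}^{n/2}a_{i-1}\cdots a_{i-2l+1}(a_{i-2l}-1)\ge 3$, which holds because all $a_j\ge 2$. Hence $Id<\frac{n+2}{n+1}\tilde w_i\tilde w_{n+k}$, with no comparison between $\min\tilde v_i$ and $\min v_i$. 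The hypothesis (\ref{eq:3.1}) on $f$ is used only for the pair $\{n+1,n+2\}$.

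Your alternative lower bound $\tilde v_i>m_\alpha/\max_j a_j$ also points the wrong way. The cycle equations give $\tilde v_i=(m_\alpha-\tilde v_{i+1})/a_i<m_\alpha/a_i$, which is an upper bound.
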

\begin{proof}
    By Theorem \ref{prop:4.2.4}, we have  $f^{T}=f_{0}^{T}+g(z_{n+1},z_{n+2})$. Let us see that $f_{0}$ and $f_{0}^{T}$ satisfy the condition $K1$. We write $v^{*}=\gcd(v_{0},\dots,v_{n})$, which implies that $v^{*}|m_{\alpha}$. First, we suppose that $I=1$. As 
    $$I=|\mathbf{w}|-d=m_{\beta}|\mathbf{v}|+m_{\alpha}v_{n+1}+m_{\alpha}v_{n+2}-m_{\alpha}m_{\beta}=1,$$
    then we have $v^{*}=1$. In the other case, when $I=2$, we have
$$I=|\mathbf{w}|-d=m_{\beta}|\mathbf{v}|+m_{\alpha}v_{n+1}+m_{\alpha}v_{n+2}-m_{\alpha}m_{\beta}=2.$$
This implies that $v^{*}\mid 2$. If $v^{*}=2$, then $m_{\alpha}=1+a_{0}\dots a_{n}$ is even. Thus, we obtain that $a_{0},a_{1},\dots, a_{n}$ are odd. As a consequence, from Formula (\ref{eq:4.3}), we have that $v_{0},v_{1},\dots, v_{n}$ are odd, which is a contradiction. Therefore, in any case, $v^{*}=1$.

Now, we consider the weight vector $\mathbf{\tilde{w}}=(m_{\beta}\mathbf{\tilde{v}},m_{\alpha}v_{n+1},m_{\alpha}v_{n+2})$ and degree $\tilde{d}=m_{\alpha}m_{\beta}$ of polynomial $f^{T}$, where $\mathbf{\tilde{v}}=(\tilde{v}_{0},\dots,\tilde{v}_{n})$ is defined as in (\ref{eq:4.11}). We denote $\tilde{v}^{*}=\gcd(\tilde{v}_{0},\dots,\tilde{v}_{n})$. Let us see that $\tilde{v}^{*}=1$. By equation (\ref{eq:4.10}), we have $\tilde{v}^{*}|m_{\alpha}$. Moreover, by Lemma \ref{lem:4.2.1} we have $|\mathbf{\tilde{v}}|=|\mathbf{v}|$, which implies that $\tilde{I}=|\mathbf{\tilde{w}}|-d=|\mathbf{\tilde{w}}|-d=I$. Following a similar process as above, we conclude that $\tilde{v}^{*}=1$. As $f_{0}$ and $f_{0}^{T}$ verify the condition $K1$, then by Theorem \ref{prop:4.2.4} we have that $L_{f}$ and $L_{f^{T}}$ are rational homology spheres. 

    Next, we assume that $\mathbf{w}$ and $d$ verify inequality (\ref{eq:3.1}):
    $$Id<\dfrac{n+2}{n+1}\min_{i,j}\{w_{i}w_{j}\}$$
    and then $L_{f}$ admits a Sasaki-Einstein metric. Next, we prove that $\mathbf{\tilde{w}}$ and $\tilde{d}=d$ also satisfy (\ref{eq:3.1}). We write $\tilde{w}_{i_{0}}\tilde{w}_{j_{0}}=\min_{i,j}\{\tilde{w}_{i}\tilde{w}_{j}\}$. Here, we have the following cases:
    \begin{itemize}
        \item If $i_{0}\in\{0,\dots,n\}$ and $j_{0}=n+1$ (for $j_{0}=n+2$, it is similar), we have $\tilde{w}_{i_{0}}\tilde{w}_{n+1}=(m_{\beta}v_{i_{0}})(m_{\alpha}v_{n+1})=d\tilde{v}_{i_{0}}v_{n+1}$. Then, it is clear that
        $$Id\leq 2d\leq d\tilde{v}_{i_{0}}v_{n+1}<\left(\dfrac{n+2}{n+1}\right)\tilde{w}_{i_{0}}\tilde{w}_{n+1}.$$
        \item If $i_{0},j_{0}\in\{0,\dots,n\}$, then $\tilde{w}_{i_{0}}\tilde{w}_{j_{0}}=m_{\beta}^{2}\tilde{v}_{i_{0}}\tilde{v}_{j_{0}}$. By Lemmas \ref{lem:4.3.2} and \ref{lem:4.3.3}, we have $m_{\alpha}<\left(\frac{n+2}{n+1}\right)\tilde{v}_{i_{0}}\tilde{v}_{j_{0}}$. Then
        $$Id\leq 2d=2m_{\alpha}m_{\beta}<\left(\frac{n+2}{n+1}\right)2m_{\beta}\tilde{v}_{i_{0}}\tilde{v}_{j_{0}}\leq\left(\frac{n+2}{n+1}\right)\tilde{w}_{i_{0}}\tilde{w}_{j_{0}}.$$
        \item If $i_{0}=n+1$ and $j_{0}=n+2$, we have $\tilde{w}_{n+1}\tilde{w}_{n+2}=w_{n+1}w_{n+2}$. Since $\mathbf{w}$ and $d$ verify equation (\ref{eq:3.1}), we conclude that $Id<\left(\frac{n+2}{n+1}\right)\tilde{w}_{n+1}\tilde{w}_{n+2}$.
    \end{itemize}
\end{proof}

As consequence  of Theorem 7.1 we have the following theorem.

\begin{theo}\label{cor:4.4.2}
    Let $f$ be a polynomial of type I of degree $d=m_{\alpha}m_{\beta}$ and weight vector $\mathbf{w}=(m_{\beta}\mathbf{v},m_{\alpha}v_{n+1},m_{\alpha}v_{n+2})$ which verify the conditions given in (\ref{eq:4.5}), such that $I=|\mathbf{w}|-d=1$. We denote its dual polynomial as $f^{T}$. Then the links  $L_{f}$ and $L_{f^{T}}$ are rational homology spheres. Moreover, if $m_{\alpha}>m_{\beta}$, then $L_{f}$ and $L_{f^T}$ admit a Sasaki-Einstein metric.
\end{theo}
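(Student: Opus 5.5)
By Lemma \ref{lem:4.1.3}(i), a type I polynomial with the conditions (\ref{eq:4.5}) has $v_{n+1}=v_{n+2}=1$. Hence $\mathbf{w}=(m_{\beta}\mathbf{v},m_{\alpha},m_{\alpha})$ and $d=m_{\alpha}m_{\beta}$. The plan is to read off the rational homology sphere property from Theorem \ref{prop:4.4.1}, and then verify the Johnson--Kollár estimate (\ref{eq:3.1}) for $\mathbf{w}$ directly. Once (\ref{eq:3.1}) holds for $f$, Theorem \ref{prop:4.4.1} transfers the Sasaki-Einstein property to $L_{f^T}$.

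\textbf{First step.} Theorem \ref{prop:4.4.1} applies with $I=1$. Its proof shows that $v^{*}=\tilde v^{*}=1$, so both $f_0$ and $f_0^T$ satisfy condition $K1$. Theorem \ref{prop:4.2.4} (equivalently Proposition \ref{prop:4.1.4}) then gives that $L_f$ and $L_{f^T}$ are rational homology $(2n+3)$-spheres.

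\textbf{Second step.} Assume $m_\alpha>m_\beta$ and check $d<\frac{n+2}{n+1}\min_{i,j}\{w_iw_j\}$. This bound is stronger than (\ref{eq:3.1}), since $\frac{n+2}{n+1}<\frac{n+2}{n+1}\cdot\frac{(n+1)^2}{(n+2)n}=\frac{n+3}{n+2}$; here the ambient space has $n+3$ coordinates and dimension $n+2$. I will split into three cases according to which pair of weights realizes the minimum.
\begin{enumerate}
\item Both indices lie in $\{0,\dots,n\}$. Then $w_iw_j=m_\beta^2v_iv_j$. By Lemma \ref{lem:4.3.3} (for $n\ge4$) or Lemma \ref{lem:4.3.2} (for $n=2$), $m_\alpha<\frac{n+2}{n+1}v_iv_j$. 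Multiplying by $m_\beta\ge 1$ gives $d=m_\alpha m_\beta<\frac{n+2}{n+1}m_\beta^2v_iv_j$.
\item One index lies in $\{0,\dots,n\}$ and the other is $n+1$ or $n+2$. Then $w_iw_{n+1}=m_\beta m_\alpha v_i=dv_i\ge d$, and the inequality is strict because of the factor $\frac{n+2}{n+1}>1$.
\item The pair is $\{n+1,n+2\}$. Then $w_{n+1}w_{n+2}=m_\alpha^2>m_\alpha m_\beta=d$, and this is exactly where the hypothesis $m_\alpha>m_\beta$ is used.
\end{enumerate}
In all three cases (\ref{eq:3.1}) holds, so Theorem \ref{prop:1.5.8} gives a Kähler-Einstein orbifold metric on $X_f$. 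The hypersurface $X_f$ is well-formed by Lemma \ref{rem:4.1.5}, so the index formula $I=|\mathbf{w}|-d$ is valid, and the Kobayashi--Boyer--Galicki correspondence yields a Sasaki-Einstein metric on $L_f$.

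\textbf{Third step.} Theorem \ref{prop:4.4.1} now gives a Sasaki-Einstein metric on $L_{f^T}$. Alternatively, one can rerun the three cases above for $\tilde{\mathbf{w}}=(m_\beta\tilde{\mathbf v},m_\alpha,m_\alpha)$: Lemma \ref{lem:4.3.3} applies equally to the $\tilde v_i$, which are the same expressions with the cyclic order reversed, and the last two weights are unchanged.

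\textbf{Main obstacle.} The only substantive inputs are the lemmas bounding $m_\alpha$ by $\min v_iv_j$, which are already available, and the comparison in the third case. I expect the delicate point to be bookkeeping about which $n$ the lemmas refer to. The cycle block has $n+1$ variables while the full hypersurface has dimension $n+2$, so I must confirm that the constant $\frac{n+2}{n+1}$ coming from the cycle block is at most the Johnson--Kollár constant for the larger hypersurface. I will also treat the case $n=2$ separately via Lemma \ref{lem:4.3.2}.
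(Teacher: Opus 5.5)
Your proposal follows the paper's proof. You use Lemma \ref{lem:4.1.3} to get $v_{n+1}=v_{n+2}=1$, and Theorem \ref{prop:4.4.1} for the rational homology sphere part (including condition $K1$ for $f_0$ and $f_0^T$). You then check the estimate with the same three cases on the pair realizing $\min_{i,j}\{w_iw_j\}$, using $m_\alpha>m_\beta$ only for the pair $\{n+1,n+2\}$, and transfer the Sasaki--Einstein property to $L_{f^T}$ via Theorem \ref{prop:4.4.1}.

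One sentence in your second step is wrong and should be replaced: the claim that your bound is stronger than (\ref{eq:3.1}). For $X_f\subset\mathbb{P}(w_0,\dots,w_{n+2})$, Theorem \ref{prop:1.5.8} is applied with its $n$ replaced by $n+2$, the dimension of the ambient weighted projective space. So (\ref{eq:3.1}) reads $Id<\frac{n+2}{n+1}\min_{i,j}\{w_iw_j\}$, which is exactly the form the paper uses in Theorem \ref{prop:4.4.1}. Your bound is therefore not stronger than (\ref{eq:3.1}); it is (\ref{eq:3.1}).

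Your displayed chain is also false on two counts. First, $\frac{n+2}{n+1}\cdot\frac{(n+1)^2}{(n+2)n}=\frac{n+1}{n}$, not $\frac{n+3}{n+2}$. Second, $\frac{n+3}{n+2}<\frac{n+2}{n+1}$, so the inequality you wrote goes the wrong way. If the relevant constant really were $\frac{n+3}{n+2}$, your case~(1), which as written yields only $d<\frac{n+2}{n+1}w_iw_j$, would not suffice.

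Once the constant is identified correctly, the bookkeeping worry in your last paragraph goes away. The constant $\frac{n+2}{n+1}$ from Lemmas \ref{lem:4.3.2} and \ref{lem:4.3.3} coincides with the Johnson--Koll\'ar constant of the larger hypersurface, and the rest of your argument goes through unchanged.
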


\begin{proof}
    The first part was proven in the previous theorem. Now, we assume that $m_{\alpha}>m_{\beta}$. By Lemma \ref{lem:4.1.3}, we have $v_{n+1}=v_{n+2}=1$. We write $w_{i_{0}}w_{j_{0}}=\min_{i,j}\{w_{i}w_{j}\}$. As in Theorem \ref{prop:4.4.1}, when $i_{0},j_{0}\in\{0,\dots,n\}$ or when $i_{0}\in\{0,\dots,n\}$ and $j_{0}\in\{n+1,n+2\}$, we see that the weight vector $\mathbf{w}$ and the degree $d$ verify inequality (\ref{eq:3.1}). In the other case, when $i_{0}=n+1$ and $j_{0}=n+2$, we have $w_{n+1}=w_{n+2}=m_{\alpha}$. As $m_{\alpha}>m_{\beta}$, we obtain
    $$d=m_{\alpha}m_{\beta}<\left(\dfrac{n+2}{n+1}\right)m_{\alpha}^{2}=\left(\dfrac{n+2}{n+1}\right)w_{n+1}w_{n+2}=\left(\dfrac{n+2}{n+1}\right)\min_{i,j}\{w_{i}w_{j}\}.$$
    Thus, $L_{f}$ admits a Sasaki-Einstein metric. By Theorem \ref{prop:4.4.1}, it follows that $L_{f^{T}}$ also admits a Sasaki-Einstein metric.
\end{proof}

\begin{rem} \label{rem:4.4.3}There exist polynomials of type III whose weight vector has the form $\mathbf{w}=(m_{\beta}\mathbf{v},m_{\alpha},m_{\alpha})$. In this case, Theorem \ref{cor:4.4.2} also holds. 
\end{rem}

Next, we discuss what happens when $f$ is a polynomial of type II.

\begin{theo}\label{prop:4.4.4}
     Let $$f=f_{0}+z_{n+1}^{a_{n+1}}+z_{n+1}z_{n+2}^{a_{n+2}}$$ be a polynomial of type II of degree $d=m_{\alpha}m_{\beta}$ and weight vector $\mathbf{w}=(m_{\beta}\mathbf{v},m_{\alpha}v_{n+1},m_{\alpha}v_{n+2})$ which verify the conditions given in (\ref{eq:4.5}) and $\gcd(m_{\beta},a_{n+2}-1)=1$  such that $I=|\mathbf{w}|-d=1$ or $I=2$. We denote by $f^T$ the polynomial obtained by the B-H rule.  Then the links  $L_{f}$ and $L_{f^{T}}$ are rational homology spheres. Moreover, we have
     \begin{itemize}
         \item[(i)] If $I=1$ and $2\leq a_{n+2}<m_{\alpha}$ then $L_{f^T}$ admits a Sasaki-Einstein metric.
         \item[(ii)] If $I=2$ and $3\leq a_{n+2}<\frac{m_{\alpha}}{2}$ then $L_{f^T}$ admits a Sasaki-Einstein metric.
     \end{itemize}     
\end{theo}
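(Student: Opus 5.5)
The plan has two parts. First, show that both links are rational homology spheres. Second, verify the Johnson–Kollár estimate of Theorem \ref{prop:1.5.8} for the orbifold $X_{f^T}$. Throughout, write $a=a_{n+2}$.

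\emph{Step 1: topology.} I copy the first part of the proof of Theorem \ref{prop:4.4.1}. From $v^*=\gcd(v_0,\dots,v_n)\mid m_\alpha$ and
\[
I=m_\beta|\mathbf v|+m_\alpha v_{n+1}+m_\alpha v_{n+2}-m_\alpha m_\beta ,
\]
we get $v^*\mid I$. For $I=1$ this gives $v^*=1$ at once. For $I=2$, suppose $v^*=2$; then $m_\alpha$ is even, so every $a_i$ is odd, so every $v_i$ is odd by (\ref{eq:4.3}), a contradiction. Hence $f_0$ satisfies $K1$, and Proposition \ref{prop:4.1.4} shows that $L_f$ is a rational homology sphere. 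Theorem \ref{prop:4.2.6}, which uses exactly the hypothesis $\gcd(m_\beta,a-1)=1$, does the same for $L_{f^T}$. Its proof also shows that $f_0^T$ satisfies $K1$ with weights $\tilde v_i$ coprime to $m_\alpha$.

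\emph{Step 2: index and scaling of the dual.} I use the (possibly non-primitive, non-well-formed) solution
\[
\tilde{\mathbf w}=\bigl(a m_\beta\tilde{\mathbf v},\ m_\alpha(a-1),\ m_\alpha m_\beta\bigr),\qquad \tilde d=a m_\alpha m_\beta ,
\]
from Theorem \ref{prop:4.2.6}. The key observation is that $\tilde I\tilde d<c\min\tilde w_i\tilde w_j$ is invariant under rescaling $(\tilde{\mathbf w},\tilde d)$ by a common factor, since both sides are quadratic. So I can test it on this vector and afterwards pass to the normalized orbifold.

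Using $|\tilde{\mathbf v}|=|\mathbf v|$ (Lemma \ref{lem:4.2.1}), $v_{n+1}=1$ (Lemma \ref{lem:4.1.3}) and $1+a v_{n+2}=m_\beta$, a direct computation gives $\tilde I=|\tilde{\mathbf w}|-\tilde d=aI$. The ambient space has $n+3$ coordinates, so it suffices to prove
\[
a^2 I\, m_\alpha m_\beta<\tfrac{n+2}{n+1}\min_{i,j}\tilde w_i\tilde w_j ,
\]
since $\frac{n+2}{n+1}$ is below the Johnson–Kollár constant.

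\emph{Step 3: case analysis on the minimizing pair.}
\begin{itemize}
\item Pairs $i,j\le n$. The product is $a^2m_\beta^2\tilde v_i\tilde v_j$. Lemma \ref{lem:4.3.3} gives $m_\alpha<\frac{n+2}{n+1}\tilde v_i\tilde v_j$, so it is enough that $I\le m_\beta$, which is clear.
\item Pairs $(i,n+2)$ with $i\le n$. The inequality reduces to $aI<\frac{n+2}{n+1}m_\beta\tilde v_i$. This holds because $m_\beta=1+av_{n+2}>a$, together with $\tilde v_i\ge 2$ (and $\ge 3$ when needed), which follows from the alternating-sum formula (\ref{eq:4.11}) with all $a_k\ge 2$.
\item Pairs $(i,n+1)$. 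The inequality reduces to $\frac{aI}{a-1}<\frac{n+2}{n+1}\tilde v_i$. The left side is at most $2$ when $I=1$ and $a\ge 2$, and at most $3$ when $I=2$ and $a\ge 3$. This is why the ranges of $a$ in (i) and (ii) start at $2$ and $3$.
\item The pair $(n+1,n+2)$. The inequality becomes
\[
\frac{a^2I}{a-1}<\frac{n+2}{n+1}m_\alpha .
\]
\end{itemize}

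\emph{Main obstacle.} I expect the last pair to be the hard case. Writing $\frac{a^2}{a-1}=a+1+\frac1{a-1}$, the hypothesis $a<m_\alpha$ (respectively $a<m_\alpha/2$ with the factor $I=2$) bounds the left side by roughly $m_\alpha+\frac1{a-1}$ (respectively $m_\alpha+\frac{2}{a-1}$). The slack $\frac{m_\alpha}{n+1}$ coming from the constant $\frac{n+2}{n+1}$ must absorb this error term. That in turn needs a lower bound on $m_\alpha=1+a_0\cdots a_n\ge 1+2^{n+1}$ relative to $n+1$. I would first try this crude bound and treat the equality-type boundary cases $a=m_\alpha-1$ and $a=2$ separately by hand.

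Once all four cases hold, Theorem \ref{prop:1.5.8} applied to the normalized $X_{f^T}$ gives an orbifold Kähler–Einstein metric, and the Kobayashi–Boyer–Galicki correspondence then gives a Sasaki–Einstein metric on $L_{f^T}$.
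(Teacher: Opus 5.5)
Your proposal is correct and is essentially the paper's proof: the same unnormalized dual weights $\bigl(a m_\beta\tilde{\mathbf v},\,m_\alpha(a-1),\,m_\alpha m_\beta\bigr)$, the same scale-invariance remark, $\tilde I=aI$, and the same four-way split on the minimizing pair. Your crude bound in the last case closes it without any boundary cases: $\frac{a^2I}{a-1}\le m_\alpha+I<\frac{n+2}{n+1}m_\alpha$, using $a\le m_\alpha-1$ (resp.\ $2a\le m_\alpha-1$ and $a\ge 3$) and $m_\alpha\ge 1+2^{n+1}>I(n+1)$.

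One small repair concerns $K1$ for $f_0^T$. Derive it from $\tilde v^*\mid I$, using $\tilde v^*\mid m_\alpha$ and $|\tilde{\mathbf v}|=|\mathbf v|$, exactly as you did for $f_0$, rather than from the proof of Theorem~\ref{prop:4.2.6}. That proof's claim fails in general: $K1$ for $f_0$ does not imply $K1$ for $f_0^T$, since $(a_0,a_1,a_2)=(4,3,2)$ gives $\mathbf v=(4,7,9)$ but $\tilde{\mathbf v}=(5,5,10)$.
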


\begin{proof}
     In a similar way as in Theorem \ref{prop:4.4.1}, the fact that $I=|\mathbf{w}|-d=1$ implies that $f_{0}$ and $f_{0}^{T}$ verify the condition $K1$. Then, by Theorem \ref{prop:4.2.6} we have that $L_{f}$ and $L_{f^{T}}$ are rational homology spheres.
     
     On the other hand, to verify that $L_{f^{T}}$ admits a Sasaki-Einstein metric in any case, we can assume without loss of generality that the dual polynomial $f^{T}=f_{0}^{T}+z_{n+2}z_{n+1}^{a_{n+1}}+z_{n+2}^{a_{n+2}}$ has degree $\tilde{d}=a_{n+2}m_{\alpha}m_{\beta}$ and weight vector 
     $$\mathbf{\tilde{w}}=(\tilde{w}_{0},\dots,\tilde{w}_{n+2})=(a_{n+2}m_{\beta}\tilde{v}_{0},\dots,a_{n+2}m_{\beta}\tilde{v}_{n},m_{\alpha}(a_{n+2}-1),m_{\alpha}m_{\beta}).$$
     This is possible because if $\tilde{d}$ and $\tilde{\mathbf{w}}$ verify inequality (\ref{eq:3.1}), then any multiple of these verify it as well.
     
     Now, we suppose that $I=1$ and $2\leq a_{n+2}<m_{\alpha}$. Let us see that $L_{f^{T}}$ admits a Sasaki-Einstein metric. For this, we must compute the index $\tilde{I}=|\mathbf{\tilde{w}}|-\tilde{d}$. Since $|\mathbf{\tilde{v}}|=|\mathbf{v}|$, we have
     \begin{align*}
         \tilde{I} & =a_{n+2}m_{\beta}|\mathbf{\tilde{v}}|+m_{\alpha}(a_{n+2}-1)+m_{\alpha}m_{\beta}-a_{n+2}m_{\alpha}m_{\beta}\\
         & =a_{n+2}m_{\beta}|\mathbf{v}|+m_{\alpha}(a_{n+2}-1)+m_{\alpha}m_{\beta}-a_{n+2}m_{\alpha}m_{\beta}\\
         & =a_{n+2}(m_{\beta}|\mathbf{v}|-m_{\alpha}m_{\beta})+m_{\alpha}(a_{n+2}-1+m_{\beta}).
     \end{align*}
     As $I=m_{\beta}|\mathbf{v}|+m_{\alpha}(v_{n+1}+v_{n+2})-m_{\alpha}m_{\beta}=1$, we obtain
     \begin{equation}\label{eq:4.26}
         \tilde{I}=a_{n+2}(1-m_{\alpha}(v_{n+1}+v_{n+2}))+m_{\alpha}(a_{n+2}-1+m_{\beta}).
     \end{equation}
     By Lemma \ref{lem:4.1.3}, we know $v_{n+1}=1$ and $a_{n+1}=m_{\beta}$. Since $w_{n+1}+a_{n+2}w_{n+2}=d$, we obtain $m_{\beta}=1+a_{n+2}v_{n+2}$. Replacing in (\ref{eq:4.26}), we obtain
     $$\tilde{I}=a_{n+2}(1-m_{\alpha}(1+v_{n+2}))+m_{\alpha}(a_{n+2}+a_{n+2}v_{n+2})=a_{n+2}.$$
     Next, we prove that $\tilde{d}$, $\tilde{I}$ and $\mathbf{\tilde{w}}$ verify inequality (\ref{eq:3.1}). We write $\tilde{w}_{i_{0}}\tilde{w}_{j_{0}}=\min_{i,j}\{\tilde{w}_{i}\tilde{w}_{j}\}$. Here, we consider the following cases:
     \begin{itemize}
         \item If $i_{0},j_{0}\in\{0,\dots,n\}$, then $\tilde{w}_{i_{0}}\tilde{w}_{j_{0}}=a_{n+2}^{2}m_{\beta}^{2}\tilde{v}_{i_{0}}\tilde{v}_{j_{0}}$. From Lemmas \ref{lem:4.3.2} and \ref{lem:4.3.3}, we have $m_{\alpha}<\left(\frac{n+2}{n+1}\right)\tilde{v}_{i_{0}}\tilde{v}_{j_{0}}$. Thus, we obtain
         $$\tilde{I}\tilde{d}=a_{n+2}^{2}m_{\alpha}m_{\beta}<\left(\frac{n+2}{n+1}\right)a_{n+2}^{2}\tilde{v}_{i_{0}}\tilde{v}_{j_{0}}m_{\beta}<\left(\dfrac{n+2}{n+1}\right)\tilde{w}_{i_{0}}\tilde{w}_{j_{0}}=\left(\dfrac{n+2}{n+1}\right)\min_{i,j}\{\tilde{w}_{i}\tilde{w}_{j}\}.$$
         \item If $i_{0}\in\{0,\dots,n\}$ and $j_{0}=n+1$, then $\tilde{w}_{i_{0}}\tilde{w}_{n+1}=m_{\alpha}m_{\beta}\tilde{v}_{i_{0}}a_{n+2}(a_{n+2}-1)$. As $\tilde{I}\tilde{d}=a_{n+2}^{2}m_{\alpha}m_{\beta}$, $\tilde{v}_{i_{0}}> 2$ and $a_{n+2}\geq 2$, we obtain
         $$\tilde{I}\tilde{d}\leq 2a_{n+2}(a_{n+2}-1)m_{\alpha}m_{\beta}< \tilde{v}_{i_{0}}a_{n+2}(a_{n+2}-1)m_{\alpha}m_{\beta}<\left(\dfrac{n+2}{n+1}\right)\min_{i,j}\{\tilde{w}_{i}\tilde{w}_{j}\}.$$
         \item If $i_{0}\in\{0,\dots,n\}$ and $j_{0}=n+2$, we have $\tilde{w}_{i_{0}}\tilde{w}_{n+2}=a_{n+2}\tilde{v}_{i_{0}}m_{\alpha}m_{\beta}^{2}$. As $m_{\beta}=1+a_{n+2}v_{n+2}>a_{n+2}$, we obtain
         $$\tilde{I}\tilde{d}=a_{n+2}^{2}m_{\alpha}m_{\beta}<a_{n+2}m_{\alpha}m_{\beta}^{2}<\left(\dfrac{n+2}{n+1}\right)\tilde{w}_{i_{0}}\tilde{w}_{j_{0}}=\left(\dfrac{n+2}{n+1}\right)\min_{i,j}\{\tilde{w}_{i}\tilde{w}_{j}\}.$$
         \item If $i_{0}=n+1$ and $j_{0}=n+2$, we have $w_{i_{0}}w_{j_{0}}=m_{\alpha}^{2}m_{\beta}(a_{n+2}-1)$. Then
         $$\tilde{I}\tilde{d}<\left(\dfrac{n+2}{n+1}\right)\min_{i,j}\{\tilde{w}_{i}\tilde{w}_{j}\}\Longleftrightarrow a_{n+2}^{2}m_{\alpha}m_{\beta}<\left(\dfrac{n+2}{n+1}\right)m_{\alpha}^2m_{\beta}(a_{n+2}-1).$$
         Simplifying, this inequality is equivalent to
         \begin{equation}\label{eq:4.27}
             \dfrac{a_{n+2}^2}{a_{n+2}-1}<\left(\dfrac{n+2}{n+1}\right)m_{\alpha}.
         \end{equation}
         Here, we consider two situations:
         \begin{itemize}
             \item If $n+2\leq a_{n+2}< m_{\alpha}$, then $\frac{a_{n+2}}{a_{n+2}-1}\leq\frac{n+2}{n+1}$. This implies the inequality in (\ref{eq:4.27}).
             \item If $a_{n+2}\leq n+1$, then $a_{n+2}^2\leq (n+1)^2\leq2^{n+1}+1\leq 1+a_{0}\dots a_{n}=m_{\alpha},$ for $n\geq 2$. As $\frac{1}{a_{n+2}-1}<\frac{n+2}{n+1}$, then (\ref{eq:4.27}) holds.
         \end{itemize}
     \end{itemize}
     Therefore, the link $L_{f^{T}}$ admits a Sasaki-Einstein metric in this case.

     On the other hand, we suppose that $I=2$ and $3\leq a_{n+2}< m_{\alpha}$. In a similar way as above, we have $\tilde{I}=2a_{n+2}$. Next, we prove that $\tilde{d}$, $\tilde{I}$ and $\mathbf{\tilde{w}}$ verify inequality (\ref{eq:3.1}). We consider $\tilde{w}_{i_{0}}\tilde{w}_{j_{0}}=\min_{i,j}\{\tilde{w}_{i}\tilde{w}_{j}\}$. Let us see the following cases:
     \begin{itemize}
         \item If $i_{0},j_{0}\in\{0,\dots,n\}$, then $\tilde{w}_{i_{0}}\tilde{w}_{j_{0}}=a_{n+2}^{2}m_{\beta}^{2}\tilde{v}_{i_{0}}\tilde{v}_{j_{0}}$. From Lemmas \ref{lem:4.3.2} and \ref{lem:4.3.3}, we have $m_{\alpha}<\left(\frac{n+2}{n+1}\right)\tilde{v}_{i_{0}}\tilde{v}_{j_{0}}$. Thus, we obtain
         $$\tilde{I}\tilde{d}=2a_{n+2}^{2}m_{\alpha}m_{\beta}<2\left(\frac{n+2}{n+1}\right)a_{n+2}^{2}\tilde{v}_{i_{0}}\tilde{v}_{j_{0}}m_{\beta}\leq\left(\dfrac{n+2}{n+1}\right)\tilde{w}_{i_{0}}\tilde{w}_{j_{0}}.$$
         \item If $i_{0}\in\{0,\dots,n\}$ and $j_{0}=n+1$, then $\tilde{w}_{i_{0}}\tilde{w}_{n+1}=m_{\alpha}m_{\beta}\tilde{v}_{i_{0}}a_{n+2}(a_{n+2}-1)$. As $\tilde{I}\tilde{d}=2a_{n+2}^{2}m_{\alpha}m_{\beta}$, $\tilde{v}_{i_{0}}\geq3$ and $a_{n+2}\geq 3$, we obtain
         \begin{align*}
           \tilde{I}\tilde{d} =2a_{n+2}^{2}m_{\alpha}m_{\beta} & \leq 3a_{n+2}(a_{n+2}-1)m_{\alpha}m_{\beta}  \\
           & \leq \tilde{v}_{i_{0}}a_{n+2}(a_{n+2}-1)m_{\alpha}m_{\beta} \\
            & < \left(\dfrac{n+2}{n+1}\right)\min_{i,j}\{\tilde{w}_{i}\tilde{w}_{j}\}.
         \end{align*}
         \item If $i_{0}\in\{0,\dots,n\}$ and $j_{0}=n+2$, we have $\tilde{w}_{i_{0}}\tilde{w}_{n+2}=a_{n+2}\tilde{v}_{i_{0}}m_{\alpha}m_{\beta}^{2}$. As $m_{\beta}=1+a_{n+2}v_{n+2}>2a_{n+2}$, we obtain
         $$\tilde{I}\tilde{d}=2a_{n+2}^{2}m_{\alpha}m_{\beta}<a_{n+2}m_{\alpha}m_{\beta}^{2}<\left(\dfrac{n+2}{n+1}\right)\tilde{w}_{i_{0}}\tilde{w}_{n+2}=\left(\dfrac{n+2}{n+1}\right)\min_{i,j}\{\tilde{w}_{i}\tilde{w}_{j}\}.$$
         \item If $i_{0}=n+1$ and $j_{0}=n+2$, we have $w_{i_{0}}w_{j_{0}}=m_{\alpha}^{2}m_{\beta}(a_{n+2}-1)$. Then
         $$\tilde{I}\tilde{d}<\left(\dfrac{n+2}{n+1}\right)\min_{i,j}\{\tilde{w}_{i}\tilde{w}_{j}\}\Longleftrightarrow 2a_{n+2}^{2}m_{\alpha}m_{\beta}<\left(\dfrac{n+2}{n+1}\right)m_{\alpha}^2m_{\beta}(a_{n+2}-1).$$
         Simplifying, this inequality is equivalent to
         \begin{equation}\label{eq:4.28}
             \dfrac{a_{n+2}^2}{a_{n+2}-1}<\left(\dfrac{n+2}{n+1}\right)\dfrac{m_{\alpha}}{2}.
         \end{equation}
         Here, we consider two situations:
         \begin{itemize}
             \item If $(n+2)\leq a_{n+2}< \frac{m_{\alpha}}{2}$, then $\frac{a_{n+2}}{a_{n+2}-1}\leq\frac{n+2}{n+1}$. This implies the inequality in (\ref{eq:4.28}).
             \item If $a_{n+2}\leq n+1$, then $a_{n+2}^2\leq (n+1)^2\leq2^{n+1}+1\leq 1+a_{0}\dots a_{n}=m_{\alpha}$, for $n\geq 2$. As $\frac{2}{a_{n+2}-1}<\frac{n+2}{n+1}$, then (\ref{eq:4.28}) holds.
         \end{itemize}
     \end{itemize}
\end{proof}

\begin{rem}\label{re:7.2.2} In case the Fano index equal 2 and  $\gcd(m_{\beta},a_{n+2}-1)>1$  Theorem \ref{prop:4.4.4} is still valid provided 
$\gcd(m_{\alpha},a_{n+2})=1$ according to Corollary \ref{cor:5.4.4} 
\end{rem}

Next, we exhibit some examples.

\begin{exm}\label{ex:4.4.5}
    We consider the weight vector $\mathbf{w}=(161,28,147,67,67)$ of the sporadic list of Johnson and Kollár in \cite{JK}. For this weight $\mathbf{w}$, we can associate three invertible polynomials (type I, II and III, respectively):
    \begin{align*}
        f_{I} & =z_{2}z_{0}^{2}+z_{0}z_{1}^{11}+z_{1}z_{2}^{3}+z_{3}^{7}+z_{4}^{7}, \\
        f_{II} & = z_{2}z_{0}^{2}+z_{0}z_{1}^{11}+z_{1}z_{2}^{3}+z_{3}^{7}+z_{3}z_{4}^{6}, \\    f_{III} & = z_{2}z_{0}^{2}+z_{0}z_{1}^{11}+z_{1}z_{2}^{3}+z_{4}z_{3}^{6}+z_{3}z_{4}^{6},
    \end{align*}
    where degree $d=m_{\alpha}m_{\beta}$, $m_{\alpha}=67$ and $m_{\beta}=7$. By Theorem \ref{prop:4.1.4}, we have that the links $L_{f_{I}}$, $L_{f_{II}}$ and $L_{f_{III}}$ are rational homology $7$-spheres and
    $$H_{3}(L_{f_{I}},\mathbb{Z})=H_{3}(L_{f_{II}},\mathbb{Z})=H_{3}(L_{f_{III}},\mathbb{Z})=(\mathbb{Z}_{67})^{6}.$$
    Now, we consider their dual polynomials
    \begin{align*}
        f_{I}^{T} & =z_{0}^{2}z_{1}+z_{1}^{11}z_{2}+z_{2}^{3}z_{0}+z_{3}^{7}+z_{3}^{7},\\
        f_{II}^{T} & =z_{0}^{2}z_{1}+z_{1}^{11}z_{2}+z_{2}^{3}z_{0}+z_{3}^{7}z_{4}+z_{4}^{6},\\
        f_{III}^{T} & =z_{0}^{2}z_{1}+z_{1}^{11}z_{2}+z_{2}^{3}z_{0}+z_{4}z_{3}^{6}+z_{3}z_{4}^{6}.
    \end{align*}
    As the index $I=|\mathbf{w}|-d=1$, by  Theorems \ref{prop:4.2.4} and \ref{prop:4.2.6}, we have that the links $L_{f_{I}^{T}}$, $L_{f_{II}^{T}}$ and $L_{f_{III}^{T}}$ are rational homology $7$-spheres where
    $$H_{3}(L_{f_{I}^{T}},\mathbb{Z})=H_{3}(L_{f_{III}^{T}},\mathbb{Z})=(\mathbb{Z}_{67})^{6} \quad \text{ and } \quad H_{3}(L_{f_{II}^{T}},\mathbb{Z})=\mathbb{Z}_{67}$$
    Moreover, as the weight vector $\mathbf{w}$ and the degree $d$ satisfy the inequality (\ref{eq:3.1}), by Theorem \ref{prop:4.4.1} we have that the links $L_{f_{I}^{T}}$ and $L_{f_{III}^{T}}$ admit a Sasaki-Einstein structure. In addition, since $2\leq a_{n+2}=6<m_{\alpha}=67$, we have by Theorem \ref{prop:4.4.4} that the link $L_{f_{II}^{T}}$ also admits a Sasaki-Einstein metric.
\end{exm}

\begin{exm} Let us  consider the polynomial $f$ of type I:
$$f=z_{4}z_{0}^{2}+z_{0}z_{1}^{3}+z_{1}z_{2}^{11}+z_{2}z_{3}^{12}+z_{3}z_{4}^{8}+z_{5}^{19}+z_{6}^{19}$$
whose degree is $d=m_{\alpha}m_{\beta}=120403$, where $m_{\alpha}=6337$ and $m_{\beta}=19$, and its weight vector 
$$\mathbf{w}=(53257,22382,8911, 9291, 13889, 6337, 6337).$$
Clearly, we notice that the index $I=|\mathbf{w}|-d=1$, $\gcd(m_{\alpha},m_{\beta})=1$ and $m_{\alpha}>m_{\beta}$. Then, by Theorem \ref{cor:4.4.2}, we have that the links $L_{f}$ and $L_{f^{T}}$ are rational homology $11$-spheres and admit a Sasaki-Einstein metric, where $f^{T}$ is the dual polynomial of $f$ obtained by the B-H rule:
$$f^{T}= z_{0}^{2}z_{1}+z_{1}^{3}z_{2}+z_{2}^{11}z_{3}+z_{3}^{12}z_{4}+z_{4}^{8}z_{0}+z_{5}^{19}+z_{6}^{19}.$$
\end{exm}

\begin{exm}\label{ex:4.4.7} Let us consider the following polynomial of type $II$:
$$f=z_{4}z_{0}^{2}+z_{0}z_{1}^{3}+z_{1}z_{2}^{11}+z_{2}z_{3}^{6}+z_{3}z_{4}^{16}+z_{5}^{29}+z_{5}z_{6}^{28}$$ which admits 
 the weight vector $\mathbf{w}=(87029,32248,13775,28333,9715, 6337, 6337).$ 
 It follows that the  corresponding weighted hypersurface $X_f$ has degree $d=183773$ and has Fano index $I=|\mathbf{w}|-d=1.$ Thus  
 $\gcd(a_{n+2}-1, m_{\beta})=1$ (here $m_{\alpha }=6337$ and $m_{\beta}=29$  and $a_{n+2}=28).$ 
 The B-H transpose of $f$ is given by 
 $$f^{T}= z_{0}^{2}z_{1}+z_{1}^{3}z_{2}+z_{2}^{11}z_{3}+z_{3}^{6}z_{4}+z_{4}^{16}z_{0}+z_{5}^{29}z_{6}+z_{6}^{28}.$$ 
 It follows from  Proposition \ref{prop:4.1.4} and Theorem \ref{prop:4.2.6} that $L_f$ and its  B-H dual link $L_{f^T}$ are  rational homology 11-sphere. Moreover from Theorem \ref{prop:4.4.4}  the link $L_{f^{T}}$ admits a Sasaki-Einstein metric.
\end{exm} 

\begin{exm}\label{ex:4.4.6}
    In \cite{BK}, Brown and Kasprzyk found 7084 sporadic cases of well-formed quasismooth threefold hypersuface of index $I=2$. From these, we consider the weight vector $\mathbf{w}=(128,56,40,37,37)$ and the hypersurface $X_{f}\subset\mathbb{P}(\mathbf{w})$ defined by the polynomial
    $$f=z_{2}z_{0}^{2}+z_{0}z_{1}^{3}+z_{1}z_{2}^{6}+z_{3}^{8}+z_{3}z_{4}^{7}$$
    of degree $d=m_{\alpha}m_{\beta}=296$, where $m_{\alpha}=37$ and $m_{\beta}=8$. Clearly, we see that $I=|\mathbf{w}|-d=2$. Then, applying the B-H rule we obtain the dual polynomial $$f^{T}=z_{0}^{2}z_{1}+z_{1}^{3}z_{2}+z_{2}^{6}z_{0}+z_{3}^{8}z_{4}+z_{4}^{7}.$$
    By Proposition \ref{prop:4.1.4} and  Corollary \ref{cor:5.4.4}) we have that the links $L_{f}$ and $L_{f^{T}}$ are rational homology $7$-spheres. Moreover, as $3\leq a_{4}=7<\frac{m_{\alpha}}{2}=\frac{37}{2}$, then by   Remark \ref{re:7.2.2}) we have that the link $L_{f^{T}}$
    admits a Sasaki-Einstein metric.
\end{exm}